\newcommand{\spa}{\medskip}
\newtheorem{thm}[subsection]{Theorem}
\newtheorem{lem}[subsection]{Lemma}
\newtheorem{cor}[subsection]{Corollary}
\newtheorem{prop}[subsection]{Proposition}
\theoremstyle{definition}
\newtheorem{defn}[subsection]{Definition}
\newtheorem{rmk}[subsection]{Remark}
\numberwithin{equation}{section}
\newcommand{\Qp}{\mathbb{Q}_p}
\newcommand{\ga}[2]{\begin{gather}\label{#1}#2 \end{gather}}
\newcommand{\mc}{\mathcal}
\newcommand{\mf}{\mathbf}
\newcommand{\Hom}{{\rm Hom}}
\newcommand{\End}{{\rm End}}
\newcommand{\Ext}{{\rm Ext}}
\newcommand{\Spec}{{\rm Spec \,}}
\newcommand{\sE}{{\mathcal E}}
\newcommand{\sH}{{\mathcal H}}
\newcommand{\sM}{{\mathcal M}}
\newcommand{\sN}{{\mathcal N}}
\newcommand{\sO}{{\mathcal O}}
\newcommand{\sT}{{\mathcal T}}
\newcommand{\sV}{{\mathcal V}}
\newcommand{\sW}{{\mathcal W}}
\newcommand{\C}{{\mathbb C}}
\newcommand{\F}{{\mathbb F}}
\newcommand{\N}{{\mathbb N}}
\newcommand{\Q}{{\mathbb Q}}
\newcommand{\Z}{{\mathbb Z}}
\newcommand{\Vect}{{\mf{Vec}}}
\newcommand{\LS}{{\mf{LS}}}
\newcommand{\Isoc}{\mathbf{Isoc}}
\newcommand{\oFable}{\Isoc^{\dag}_F}
\newcommand{\oi}{\Isoc^{\dag}}
\newcommand{\Perv}{{\mf{Perv}}}
\newcommand{\MHM}{{\mf{MHM}}}
\newcommand{\MHS}{{\mf{MHS}}}
\newcommand{\id}{{\rm id\hspace{.1ex}}}
\newcommand{\Ind}{{\rm Ind}}
\newcommand{\iso}{\xrightarrow{\sim}}
\begin{document}	
	
	\title[Universal extensions]{On the universal extensions  in  Tannakian categories}
	\date{\today}
	\author{Marco D'Addezio and Hélène Esnault}
	\address{Max-Planck-Institut für Mathematik, Vivatsgasse 7, 53111, Bonn, Germany}
	\email{daddezio@mpim-bonn.mpg.de}
	\address{Freie Universit\"at Berlin, Arnimallee 3, 14195, Berlin, Germany}
	\email{esnault@math.fu-berlin.de}

	\begin{abstract} 
		We use the notion of universal extension in a linear abelian category to study extensions of variations of mixed Hodge structure and convergent and overconvergent isocrystals. The results we obtain apply, for example, to prove the exactness of some homotopy sequences for these categories and to study $F$-able isocrystals.
	\end{abstract}
\maketitle
	\tableofcontents
\section{Introduction}

In this note we define and construct \textit{universal extensions} (Definition \ref{defn:univ}) of objects belonging to a linear abelian category. This construction is   convenient to study all the possible extensions of two objects of the category at the same time. On the one hand, a universal extension enjoys more properties than single extensions. On the other hand, every non-trivial extension between two objects embeds into a universal extension (Proposition \ref{univ-ext:p}). Some examples of universal extensions appeared already in the literature. For example, in \cite[§4]{EH06}, universal extensions are used to study the category of flat connections over a smooth variety in characteristic $0$. In our article,  we use universal extensions to solve some open problems on variations of mixed Hodge structures and convergent and overconvergent isocrystals.

\spa Let us present our first application. Let $X$ be a smooth connected complex variety. Write $\MHS$ for the category of graded-polarisable finite-dimensional mixed $\Q$-Hodge structures, $\mf{LS}(X)$ for the category of finite-rank $\Q$-local systems over $X$, and ${\mf M}(X)$ for the category of (graded-polarizable) admissible variations of $\Q$-Hodge structure. Consider the functor $\Psi:{\mf M}(X)\to \mf{LS}(X)$ which associates to a variation its underlying local system.

\begin{thm}[Theorem \ref{vari:t}]\label{i-vari:t}
	For an object $\sV\in \mf{LS}(X)$ the following conditions are equivalent.
	\begin{itemize}
		\item[1)] $\sV$ is a subquotient of a local system of the form $\Psi(\sM)$ for some $\sM \in \mf M(X)$.
		\item[2)] The irreducible subquotients of $\sV$ are subquotients of local systems underlying variations of polarizable pure $\Q$-Hodge structure.
		\item[3)] $\sV$ is a subobject of a local system of the form $\Psi(\sM)$ for some $\sM \in \mf M(X)$.
	\end{itemize}
	
\end{thm}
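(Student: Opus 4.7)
The implications $(3)\Rightarrow(1)\Rightarrow(2)$ are formal: a subobject is a subquotient, and if $\sV$ is a subquotient of $\Psi(\sM)$ for some $\sM\in\mf M(X)$, refining by the weight filtration $W_\bullet\sM$ shows that every irreducible subquotient of $\sV$ is an irreducible subquotient of some $\Psi(\gr^W_k\sM)$, which underlies a polarizable pure variation.

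For $(2)\Rightarrow(3)$ I would induct on the length of $\sV$ in $\LS(X)$. In the base case $\sV$ is irreducible; $(2)$ presents $\sV$ as an irreducible subquotient of $\Psi(\sN)$ for some polarizable pure $\sN$, and Deligne's semisimplicity theorem makes $\Psi(\sN)$ semisimple, so $\sV$ appears as a direct summand and hence as a subobject. For the inductive step, take a short exact sequence $0\to\sV'\to\sV\to\sV''\to 0$ in $\LS(X)$ with $\sV''$ irreducible. The inductive hypothesis applied to $\sV'$ and the base case applied to $\sV''$ yield embeddings $\sV'\inj\Psi(\sM')$ and $\sV''\inj\Psi(\sN'')$ with $\sM'\in\mf M(X)$ and $\sN''$ polarizable pure.

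The goal is now to embed $\sV$ itself into some $\Psi(\sM)$. Here I would deploy the paper's universal extension formalism: form the universal extension of $\sN''$ by $\sM'$ in $\mf M(X)$,
\[
0\to\sM'\to\sE\to\sN''\otimes V\to 0,\qquad V\subseteq\Ext^1_{\mf M(X)}(\sN'',\sM'),
\]
for a suitable finite-dimensional $V$. By Proposition \ref{univ-ext:p}, every extension of $\sN''$ by $\sM'$ in $\mf M(X)$ with class in $V$ embeds into $\sE$, so after applying $\Psi$, every local system extension of $\Psi(\sN'')$ by $\Psi(\sM')$ whose class lies in the image of $V$ embeds into $\Psi(\sE)$.

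The expected technical crux is to arrange that the class $[\sV]$---pushed along $\sV'\inj\Psi(\sM')$ and interpreted (via pullback along $\sV''\inj\Psi(\sN'')$) as an extension of $\Psi(\sN'')$ by $\Psi(\sM')$---is realized by the $\Psi$-image of $\Ext^1_{\mf M(X)}(\sN'',\sM')$. I expect this step to be handled by iteratively enlarging $\sM'$, each enlargement itself a universal extension in $\mf M(X)$, so as to absorb the obstructions (living in higher Ext groups in $\LS(X)$) that prevent lifting the extension class from $\LS(X)$ to $\mf M(X)$; the richness of the category of admissible VMHS, via its embedding into mixed Hodge modules, should guarantee that all such universal enlargements exist. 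Once the lift is achieved, the universal property yields the embedding $\sV\inj\Psi(\sE)$, and one takes $\sM=\sE$, completing the induction.
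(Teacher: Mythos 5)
Your skeleton (induction on length, Deligne's semi-simplicity for the irreducible case, Proposition~\ref{univ-ext:p} to reduce to a single universal extension) matches the paper's, but the step you yourself flag as the ``technical crux'' is exactly where the real content lies, and the strategy you sketch for it does not work. The difficulty is that the map $\Ext^1_{\mf M(X)}(\sN'',\sM')\to\Ext^1_{\LS(X)}(\Psi(\sN''),\Psi(\sM'))$ is very far from surjective: by Lemma~\ref{lemma4} its image is only $H^0_{\MHS}(\sH^1p_{X*}((\sN'')^\vee\otimes\sM'))$, i.e.\ the Hodge classes inside $H^1_{\rm Betti}$, so the class of $\sV$ has no reason to be hit. ``Iteratively enlarging $\sM'$ by universal extensions in $\mf M(X)$'' does not repair this: pushing the class of $\sV$ forward along an enlargement $\sM'\inj\sM''$ does not move it into the image of $\Ext^1_{\mf M(X)}$, and the obstruction is not governed by higher Ext groups in $\LS(X)$ at all. (There is also a secondary issue: $\Ext^1_{\mf M(X)}(\sN'',\sM')$ can be infinite-dimensional --- already $\Ext^1_{\MHS}(\Q(0),\Q(1))\cong\C^*\otimes\Q$ is --- so ``the'' universal extension in $\mf M(X)$ is not available without specifying the finite-dimensional subspace $V$, which is precisely the unresolved point.)

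The paper's resolution goes the other way around: it forms the universal extension in $\LS(X)$, where $\Ext^1_{\LS(X)}(\Psi(\sN),\Psi(\sM))=H^1_{\rm Betti}(X,\Psi(\sN)^\vee\otimes\Psi(\sM))$ is finite-dimensional, equips this Ext group with M.~Saito's mixed Hodge structure $E=\sH^1p_{X*}(\sN^\vee\otimes\sM)$, and then shows the universal class lifts to an extension $0\to\sM\to\sE\to p_X^*(E)\otimes\sN\to 0$ in $\mf M(X)$. The point is that under the isomorphism of Lemma~\ref{ext-iso:l} the universal class corresponds to $\id_E\in\End_{\Vect_\Q}(\Psi(E))$, which \emph{is} a Hodge class in $\End_{\MHS}(E)=H^0_{\MHS}(E^\vee\otimes E)$, and Lemma~\ref{lemma4} (whose proof needs Beilinson's vanishing $\Ext^2_{\MHS}=0$ and the Leray spectral sequence for $p_{X*}$ on ind-categories of mixed Hodge modules) says precisely that Hodge classes in $H^0_{\MHS}(\sH^1p_{X*}(-))$ come from $\Ext^1_{\mf M(X)}$. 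So the missing ideas are: twist the quotient by the constant variation $p_X^*(E)$ rather than enlarging the sub, and identify the universal class as the identity of $E$, which is tautologically a morphism of mixed Hodge structures. Without these your induction cannot close.
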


Besides Deligne's semi-simplicity theorem, the main step in the proof of Theorem \ref{i-vari:t} is to show that for every $\sM,\sN\in \mf M(X)$ a universal extension
$$0\to \Psi(\sM)\to \sE_{\LS(X)}(\Psi(\sN),\Psi(\sM))\to {\rm Ext}^1_{\mf{LS}(X)}(\Psi(\sN), \Psi(\sM))\otimes \Psi(\sN)\to 0$$
in $\mf{LS}(X)$, constructed in Proposition \ref{prop:univ}, comes from an extension \begin{equation}\label{eq}
0\to \sM\to \sE\to V\otimes \sN\to 0
\end{equation}
 in $\mf{M}(X)$, where $V$ is M. Saito's mixed Hodge structure on ${\rm Ext}^1_{\mf{LS}(X)}(\Psi(\sN), \Psi(\sM))$ given by \cite[Thm.~4.3]{Sai90}. We deduce this result from Beilinson's vanishing of the higher extensions of mixed Hodge structures in Lemma \ref{lemma4}. The existence of (\ref{eq}) is a natural enhancement of M. Saito's existence theorem. When $\sM$ and $\sN$ are isomorphic to $\underline{\Q(0)}$, the existence of (\ref{eq}) was already known, as it follows from \cite{HZ87}.

\spa As a consequence of Theorem \ref{i-vari:t}, we deduce an exact sequence for the Tannaka fundamental groups of the categories involved.
\begin{cor}[Corollary \ref{VMHS:c}] \label{i-VMHS:c}
	Let $x$ be a complex point of $X$. The sequence $$\pi_1(\mf{LS}(X),x)\xrightarrow{\Psi^*} \pi_1(\mf M(X),x)\xrightarrow{p_{X*}} 
	\pi_1(\MHS)\to 1$$ is exact. 
\end{cor}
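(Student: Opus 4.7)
The plan is to apply the Tannakian criterion for exactness of sequences of affine group schemes arising from exact tensor functors, to the composition
\[
\MHS \xrightarrow{p_X^*} \mf{M}(X) \xrightarrow{\Psi} \mf{LS}(X),
\]
which sends $H$ to the constant local system $\underline{H_{\Q}}$. Let $\mf{LS}'(X) \subset \mf{LS}(X)$ be the full Tannakian subcategory consisting of the local systems characterised by the equivalent conditions of Theorem~\ref{vari:t}. By that theorem, every such object is a subobject of some $\Psi(\sM)$, and $\Psi$ factors as $\mf{M}(X) \to \mf{LS}'(X) \hookrightarrow \mf{LS}(X)$. The inclusion $\mf{LS}'(X) \hookrightarrow \mf{LS}(X)$, being fully faithful with image closed under subobjects, induces a surjection $\pi_1(\mf{LS}(X),x) \twoheadrightarrow \pi_1(\mf{LS}'(X),x)$. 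Thus the corollary reduces to establishing the short exact sequence
\[
1 \to \pi_1(\mf{LS}'(X),x) \to \pi_1(\mf{M}(X),x) \xrightarrow{p_{X*}} \pi_1(\MHS) \to 1.
\]

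By the standard Tannakian criterion (following Deligne--Milne or Esnault--Hai--Sun), this short exact sequence is equivalent to the conjunction of:
\begin{itemize}
\item[(a)] $p_X^*$ is fully faithful with essential image closed under subobjects;
\item[(b)] for $\sM \in \mf{M}(X)$, the local system $\Psi(\sM)$ is a direct sum of copies of $\underline{\Q}$ iff $\sM$ lies in the essential image of $p_X^*$;
\item[(c)] every object of $\mf{LS}'(X)$ is a subobject of $\Psi(\sM)$ for some $\sM$.
\end{itemize}
Condition (c) is exactly Theorem~\ref{vari:t}(3). For (a), since $X$ is connected, morphisms between constant local systems are determined by linear maps between fibres; requiring them to respect the (constant) Hodge filtrations reduces $\Hom_{\mf{M}(X)}(p_X^* H_1, p_X^* H_2)$ to $\Hom_{\MHS}(H_1, H_2)$. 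A sub-VMHS of $p_X^*(H)$ has constant underlying local system (a sub-local-system of a constant local system on a connected base is constant), with correspondingly constant Hodge filtration, hence arises from a sub-MHS of $H$.

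For (b), the ``if'' direction is immediate. The converse is the Theorem of the Fixed Part for admissible VMHS: an admissible VMHS whose underlying local system is trivial is itself constant. This is the classical result of Deligne for pure polarisable VHS, extended to the admissible mixed case by the standard theory. The main technical inputs are thus Theorem~\ref{vari:t} (for (c)) and the Theorem of the Fixed Part (for (b)); once these are granted, the Tannakian criterion produces the short exact sequence above, and composing with the surjection $\pi_1(\mf{LS}(X),x) \twoheadrightarrow \pi_1(\mf{LS}'(X),x)$ yields the exact sequence claimed in the corollary. The conceptual obstacle addressed by Theorem~\ref{vari:t} is precisely that, for general $X$, not every local system lies in the essential image of $\Psi$, so without the refined control (3) one would at best obtain an exactness statement modulo normal closure.
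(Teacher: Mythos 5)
Your reduction to the short exact sequence for $\mf{LS}'(X)=\mf{LS}(X)^{\mathrm{hdg}}$ is fine, and your conditions (a) and (c) are handled essentially as in the paper (which gets faithful flatness of $p_{X*}$ more cheaply from the splitting induced by the fibre functor at $x$, and uses Theorem \ref{vari:t} exactly for condition (c), i.e.\ observability of $\Psi$). The gap is that your list of conditions misstates the exactness criterion. The criterion of \cite[Thm.~A.1]{EHS07} (Proposition \ref{a-prop:exseq} in the paper) requires, in addition to (a), (b), (c), that for \emph{every} $\sM\in\mf M(X)$ the maximal trivial sub-local system $H^0_{\rm Betti}(X,\Psi(\sM))\otimes\underline{\Q}\subseteq\Psi(\sM)$ be of the form $\Psi(\sM_0)$ for a subobject $\sM_0\subseteq\sM$ in $\mf M(X)$ coming from $\MHS$. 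Your (b) is only the degenerate case where all of $\Psi(\sM)$ is trivial, and (a)+(b)+(c) do not imply this extra condition. Concretely, take $U\subset\mathrm{GL}_2\xrightarrow{\det}\G_m$ with $U$ the upper-triangular unipotent subgroup: $U$ is observable, the essential image of $\det^*$ is fully faithful and closed under subobjects, and a $\mathrm{GL}_2$-representation on which $U$ acts trivially factors through $\det$ (its kernel is normal and contains $U$, hence $\mathrm{SL}_2$); so all three of your conditions hold, yet $\ker(\det)=\mathrm{SL}_2\neq U$. The failure is precisely at the maximal-trivial-subobject condition for the standard representation, whose $U$-fixed line is not a $\mathrm{GL}_2$-subobject.

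The ingredient needed to close the gap is the Fixed Part Theorem in its genuine form, namely Steenbrink--Zucker's \cite[Prop.~4.19]{SZ} generalising Deligne: for an arbitrary admissible $\sM$, the constant sub-local system $H^0_{\rm Betti}(X,\Psi(\sM))\otimes\underline{\Q}$ underlies a sub-variation of mixed Hodge structure of $\sM$, pulled back from $\MHS$. You do invoke ``the Theorem of the Fixed Part,'' but only in the weaker form that an admissible variation with trivial underlying local system is constant, and only in the service of (b); redirected at the missing condition it completes the argument, and this is exactly how the paper concludes.
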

A variant of Corollary \ref{i-VMHS:c} for local systems which can be endowed with a $\Z$-structure is also proved in \cite[Thm. C.11]{And20}.
	
\spa Our second main result is about overconvergent isocrystals, introduced in \cite{Ber} by Berthelot, which are the non-trivial constant-rank coefficients of rigid cohomology. Let $K$ be a complete discretely valued field of mixed characteristic $(0,p)$ and perfect residue field $k$, and let $X$ be a variety over $k$. Write $\oi(X/K)$ for the category of $K$-linear overconvergent isocrystals over $X$. The rigid cohomology groups of $X$ with coefficients in an overconvergent isocrystal might be, in general, infinite-dimensional (see \cite{Cre98}). On the other hand, if the overconvergent isocrystal can be endowed with a Frobenius structure, the cohomology groups are always finite-dimensional, as proven in \cite{Ked06}. Kedlaya's result naturally extends to the full subcategory of \textit{$F$-able} overconvergent isocrystals (Definition \ref{F-able:d}), denoted by $\oi_F(X/K)$. This subcategory is large enough for most of the geometric applications (see for example \cite{Abe18} and \cite{Laz18}). Using universal extensions and Kedlaya's finiteness theorem, we provide a new characterisation of $F$-able overconvergent isocrystals, solving the problem presented in \cite[Rmk. 3.1.9]{Dad}.
\begin{thm}[Theorem \ref{F-able-oc:t}]
	\label{i-F-able-oc:t}
An overconvergent isocrystal over $X$ is $F$-able if and only if it can be embedded into an $F$-invariant overconvergent isocrystal (Definition~\ref{F-able:d}).
\end{thm}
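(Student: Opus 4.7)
My plan separates into the easy and hard implications. The ``if'' direction, that a subobject of an $F$-invariant isocrystal is $F$-able, should be almost immediate from Definition~\ref{F-able:d}: an $F$-invariant isocrystal is in particular $F$-able, and $F$-ableness is preserved under subobjects.

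For the ``only if'' direction, I would adapt the strategy used in the proof of Theorem~\ref{i-vari:t}. The essential ingredient is Kedlaya's finiteness theorem, which makes $\Ext^1_{\oi_F(X/K)}(\sA,\sB)$ finite-dimensional over $K$ for any $F$-able $\sA,\sB$, so that the universal extensions of Proposition~\ref{prop:univ} can be constructed inside $\oi_F(X/K)$. Starting from an $F$-able $\sE$, I would induct on its (finite) length in $\oi_F(X/K)$.

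For the base case, if $\sE$ is an irreducible $F$-able overconvergent isocrystal, then it is a subquotient of some $F$-isocrystal $\sG$; the Frobenius structure on $\sG$ forces the Frobenius orbit $\{F^{n*}\sE\}_{n\geq 0}$ to consist of only finitely many isomorphism classes, and a direct sum over these classes is $F$-invariant and contains $\sE$. For the inductive step, pick a short exact sequence $0\to \sE'\to \sE\to \sE''\to 0$ with $\sE'$ irreducible, and let $\sE'\hookrightarrow \sF'$, $\sE''\hookrightarrow \sF''$ be embeddings into $F$-invariant isocrystals supplied by the inductive hypothesis. Form the universal extension
$$0\to \sF'\to \sU \to \Ext^1_{\oi_F(X/K)}(\sF'',\sF')\otimes \sF''\to 0$$
in $\oi_F(X/K)$. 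Pushing out along $\sE'\hookrightarrow \sF'$ and pulling back along $\sE''\hookrightarrow \sF''$, the extension $\sE$ produces an extension of $\sF''$ by $\sF'$, which by Proposition~\ref{univ-ext:p} embeds into $\sU$; this gives the desired embedding $\sE\hookrightarrow \sU$.

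The main obstacle is verifying that $\sU$ is itself $F$-invariant. Since $F^*$ is an exact auto-equivalence of $\oi_F(X/K)$, applying it to the displayed sequence yields the universal extension of $F^*\sF''$ by $F^*\sF'$. Combined with isomorphisms $F^*\sF'\cong \sF'$ and $F^*\sF''\cong \sF''$ (which exist because $\sF',\sF''$ are $F$-invariant) and the naturality of the universal extension construction, one identifies $F^*\sU\cong \sU$. The delicate point is that $F$-invariance is only an isomorphism-level property, so the interaction of the chosen isomorphisms with the universal extension must be tracked carefully; the uniqueness built into the universal extension is what provides the rigidity needed to conclude $F^*\sU\cong \sU$.
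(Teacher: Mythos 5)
Your overall strategy --- the easy ``if'' direction, induction on Jordan--H\"older length, Kedlaya's finiteness theorem to keep the universal extension inside $\oFable(X/K)$, and $F$-invariance of the universal extension via the autoequivalence $F^*$ together with uniqueness --- is the same as the paper's. But there is a genuine gap in your inductive step: you split off the irreducible constituent as a \emph{subobject}, writing $0\to\sE'\to\sE\to\sE''\to 0$ with $\sE'$ irreducible, and then claim that pulling back along $\sE''\hookrightarrow\sF''$ turns the extension $\sE$ into an extension of $\sF''$ by $\sF'$. Pullback goes the wrong way: the inclusion $\sE''\hookrightarrow\sF''$ induces $\Ext^1(\sF'',\sF')\to\Ext^1(\sE'',\sF')$, so it produces extensions of $\sE''$ \emph{from} extensions of $\sF''$, not conversely. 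To extend a class from $\Ext^1(\sE'',\sF')$ to $\Ext^1(\sF'',\sF')$ one needs a retraction $\sF''\twoheadrightarrow\sE''$, i.e.\ $\sE''$ must be a \emph{direct summand} of $\sF''$ --- and this is exactly the hypothesis that Proposition~\ref{univ-ext:p} places on the quotient term, which your inductive hypothesis (a mere embedding) does not supply.

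The fix is to reverse the roles: take the \emph{quotient} to be irreducible, so $0\to\sE'\to\sE\to\sE''\to 0$ with $\sE''$ irreducible. Then the base case (your Frobenius-orbit argument, i.e.\ Lemma~\ref{irr-isoc:l}) realises $\sE''$ as a direct summand of an $F$-invariant semi-simple object $\sF''$, while the inductive hypothesis only needs to embed the kernel $\sE'$ into an $F$-invariant $\sF'$; Proposition~\ref{univ-ext:p} then applies verbatim and yields an embedding of $\sE$ into $\sE_{\oFable(X/K)}(\sF'',\sF')\oplus\sF''$ (note the extra summand $\sF''$, which is needed when the extension class vanishes and is harmless since a direct sum of $F$-invariant objects is $F$-invariant). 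This is precisely the paper's argument. Two smaller remarks: the base case follows directly from Definition~\ref{F-able:d} (irreducible and $F$-able means $(F^*)^n\sE\simeq\sE$ for some $n>0$), with no need to pass through a subquotient of an $F$-isocrystal; and your worry about $F$-invariance being only an isomorphism-level property is resolved exactly as you suggest, by the uniqueness of the universal extension (Proposition~\ref{prop:univ}).
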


The proof of Theorem \ref{i-F-able-oc:t} can be adapted easily in other situations (see Remark \ref{other-cases:r}). In particular, using universal extensions, one can also obtain similar results for the categories of $\ell$-adic lisse sheaves, $\ell$-adic perverse sheaves, and arithmetic holonomic $\mathcal{D}$-modules.

\spa

 The category of overconvergent isocrystals admits a natural functor $\alpha:\oi(X/K)\to\Isoc(X/K)$ to the category of convergent isocrystals. This functor fails to be fully faithful in general, as proven in \cite[§4]{Abe11}. Combining Theorem \ref{i-F-able-oc:t} and Kedlaya's full faithfulness theorem, proven in \cite{Ked04}, we show that, when $X$ is smooth, $\alpha$ is fully faithful when restricted to the subcategory of $F$-able overconvergent isocrystals. 

\begin{cor}[Corollary \ref{ff-Fable:cor}]\label{i-ff-Fable:cor}If $X$ is a smooth variety, the natural functor $\alpha:\oFable(X/K)\to \Isoc(X/K)$ is fully faithful.
\end{cor}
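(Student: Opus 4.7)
The plan is to establish faithfulness and fullness of $\alpha$ on $\oFable(X/K)$ separately. Faithfulness is inherited from the faithfulness of $\alpha:\oi(X/K)\to\Isoc(X/K)$ on the entire category, which holds because an overconvergent morphism is by definition a convergent morphism satisfying an extra growth condition, so the restriction on Hom-sets is injective. For fullness, the idea is to use internal Hom to reduce to a statement about global sections of the unit object, and then to combine Theorem~\ref{F-able-oc:t} with Kedlaya's full-faithfulness theorem \cite{Ked04} through a diagram chase.

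Concretely: given $\sM,\sN\in\oFable(X/K)$, I would consider $\sE:=\sM^\vee\otimes\sN\in\oi(X/K)$. Since $\oFable(X/K)$ is closed under tensor products and duals (a consequence of Definition~\ref{F-able:d} together with Theorem~\ref{F-able-oc:t}), $\sE$ is again $F$-able. The map $\alpha:\Hom_{\oi}(\sM,\sN)\to\Hom_{\Isoc}(\alpha\sM,\alpha\sN)$ is then identified via the internal-Hom adjunction with $\alpha_{\sE}:\Hom_{\oi}(\mathbf{1},\sE)\to\Hom_{\Isoc}(\mathbf{1},\alpha\sE)$, so it is enough to prove $\alpha_{\sE}$ is bijective for every $F$-able $\sE$. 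By Theorem~\ref{F-able-oc:t}, I would choose an embedding $\sE\hookrightarrow\sE'$ in $\oi(X/K)$ with $\sE'$ $F$-invariant, set $\sE'':=\sE'/\sE$, and apply $\Hom(\mathbf{1},-)$ to the short exact sequence $0\to\sE\to\sE'\to\sE''\to 0$ in both $\oi(X/K)$ and $\Isoc(X/K)$. This produces a commutative ladder with left-exact rows, whose middle column $\alpha_{\sE'}$ is an isomorphism by Kedlaya's theorem applied to the $F$-invariant $\sE'$, and whose right column $\alpha_{\sE''}$ is injective by faithfulness of $\alpha$. A diagram chase then shows $\alpha_{\sE}$ is bijective: given $s\in\Hom_{\Isoc}(\mathbf{1},\alpha\sE)$, view its image in $\Hom_{\Isoc}(\mathbf{1},\alpha\sE')$, lift it to $\tilde s\in\Hom_{\oi}(\mathbf{1},\sE')$ via the middle iso, and observe that the image of $\tilde s$ in $\Hom_{\oi}(\mathbf{1},\sE'')$ vanishes after $\alpha$ and hence already in $\oi(X/K)$ by faithfulness, so $\tilde s$ factors through $\sE$, producing the desired preimage of $s$.

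The hard part will be to extract from \cite{Ked04} precisely the statement needed for the middle column. Kedlaya's theorem is typically phrased for Frobenius-equivariant morphisms between overconvergent $F$-isocrystals, whereas the diagram above involves unrestricted $\oi$- and $\Isoc$-morphisms between the $F$-invariant $\sE'$ and the unit object $\mathbf{1}$. I would address this by fixing an $F$-structure on $\sE'$, studying the induced $K$-linear operator $\Phi$ on both Hom spaces, and decomposing them into generalised $\Phi$-eigenspaces; each such eigenspace can be realised as a space of $F$-equivariant morphisms out of a suitable Jordan-block $F$-isocrystal pulled back from $\Spec k$, to which Kedlaya's equivariant statement applies directly, so the full $\Phi$-module structures must agree.
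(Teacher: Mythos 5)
Your proposal is correct and follows essentially the same route as the paper: reduce to surjectivity of $\alpha_*$ on $H^0$ of an $F$-able object via internal Hom, use Theorem \ref{F-able-oc:t} to embed into an $F$-invariant object $\sE'$, reduce the claim from $\sE$ to $\sE'$ (your diagram chase is a hands-on version of the paper's Lemma \ref{H0:l}, which is proved there by a Tannakian-group argument), and finally invoke Kedlaya's full faithfulness for the $F$-invariant object. The one place where your implementation is shakier than the paper's is the last step, which you yourself flag as the hard part: a Frobenius structure $\Phi$ on $\alpha(\sE')$ acts on $H^0_{\Isoc(X/K)}(\alpha(\sE'))$ only $\sigma$-semilinearly when the induced Frobenius $\sigma$ on $K$ is nontrivial (the paper allows any complete discretely valued $K$ with perfect residue field), so a decomposition into generalised $\Phi$-eigenspaces is not available in general. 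The paper sidesteps this: it takes the \emph{whole} maximal trivial subobject $\sT=H^0_{\Isoc(X/K)}(\alpha(\sN^\dag))\otimes\sO_{X/K}$, observes that $\Phi$ preserves $\sT$ by maximality and hence equips $\sT$ (and its overconvergent avatar $\sT^\dag$) with a Frobenius structure, and applies Kedlaya's equivariant statement once to $H^0((\sT^\dag)^\vee\otimes\sN^\dag)^{F=\id}$ to lift the tautological inclusion $\sT\subseteq\alpha(\sN^\dag)$. You should replace your eigenspace decomposition by this single application to the pullback from $\Spec k$ of the $F$-isocrystal $(H^0,\Phi)$; with that repair your argument is complete and matches the paper's.
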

 Subsequently, in Proposition \ref{coun-ext:p}, we show that Theorem \ref{i-F-able-oc:t} is false on the affine line if one replaces overconvergent isocrystals with convergent isocrystals. Using universal extensions, we prove instead a weaker form of Theorem \ref{i-F-able-oc:t} for convergent isocrystals.

\begin{thm}[Theorem \ref{F-inv-iso:t}]\label{i-F-inv-iso:t}
	If $X$ is a variety over $k$ and $\sM$ is a subquotient of an $F$-invariant convergent isocrystal $\widetilde{\sM}$, then $\sM$ embeds into an $F$-invariant convergent isocrystal in $\langle\widetilde{\sM}\rangle^\otimes$, where $\langle\widetilde{\sM}\rangle^\otimes$ denotes the Tannakian category $\otimes$-generated by $\widetilde{\sM}$ (see Section~\ref{Tan:n}).
\end{thm}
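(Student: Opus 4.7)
The plan is to work inside the Tannakian subcategory $\sT := \langle\widetilde{\sM}\rangle^\otimes$ and to exploit the universal extension construction of Proposition~\ref{prop:univ} there. Because $\sT$ is finitely $\otimes$-generated, its Tannakian fundamental group $G$ is a linear algebraic group of finite type, so the groups $\Ext^i_\sT(\sN,\sP)$ are finite-dimensional for all $\sN,\sP\in\sT$; in particular universal extensions inside $\sT$ exist, even though their analogues in the ambient category $\Isoc(X/K)$ need not.

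The first ingredient I would isolate is what I will call the \emph{$F$-invariance lemma}: if $A,B\in\sT$ are $F$-invariant, then the universal extension $\sE_\sT(B,A)$ is $F$-invariant. The iso $F^*\widetilde{\sM}\cong\widetilde{\sM}$ implies that $F^*$ restricts to an autoequivalence of $\sT$, and the universal extension is characterised by a universal property preserved by autoequivalences, so
\[F^*\sE_\sT(B,A)\;\cong\;\sE_\sT(F^*B,F^*A)\;\cong\;\sE_\sT(B,A).\]

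I would then prove the theorem by induction on the length of $\sM$. For the base case, $\sM$ simple: each Frobenius pullback $(F^n)^*\sM$ is a simple subquotient of $(F^n)^*\widetilde{\sM}\cong\widetilde{\sM}$; since the set of simple subquotients of $\widetilde{\sM}$ is finite up to isomorphism, the Frobenius orbit of $\sM$ has some finite length $d$, and $\bigoplus_{n=0}^{d-1}(F^n)^*\sM\in\sT$ is $F$-invariant and contains $\sM$ as a direct summand. For the inductive step, pick a simple subobject $A\subset\sM$ and write $0\to A\to\sM\to B\to 0$; then $A$ and $B$ are subquotients of $\widetilde{\sM}$ of strictly smaller length, so by the inductive hypothesis there exist $F$-invariant $\sE_A,\sE_B\in\sT$ with $A\hookrightarrow\sE_A$ and $B\hookrightarrow\sE_B$. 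Pushing $\sM$ out along $A\hookrightarrow\sE_A$ yields an extension $\sM_1$ of $B$ by $\sE_A$ with $\sM\hookrightarrow\sM_1$, and the goal is to embed $\sM_1$ into the $F$-invariant object $\sU:=\sE_\sT(\sE_B,\sE_A)$ via Proposition~\ref{univ-ext:p}.

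The main obstacle is this last embedding. Using the universal property directly requires lifting the extension class $[\sM_1]\in\Ext^1_\sT(B,\sE_A)$ along the pullback map $\Ext^1_\sT(\sE_B,\sE_A)\to\Ext^1_\sT(B,\sE_A)$ induced by $B\hookrightarrow\sE_B$; the a priori obstruction lies in $\Ext^2_\sT(\sE_B/B,\sE_A)$, which need not vanish. I would attempt to bypass this by enlarging $\sE_A$ through further universal extensions (which remain $F$-invariant by the $F$-invariance lemma) until the obstruction class dies, or by a more delicate iterated push-out/universal-extension bootstrap that replaces the single step by a transfinite one. This is where the cleverness of the universal extension formalism must really earn its keep.
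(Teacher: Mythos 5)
Your overall strategy --- work inside $\sT=\langle\widetilde{\sM}\rangle^\otimes$ where the Tannaka group has finite type and the $\Ext^1$ groups are finite-dimensional, observe that $F^*$ restricts to an autoequivalence of $\sT$ so that universal extensions of $F$-invariant objects are $F$-invariant, and induct on the length of $\sM$ --- is exactly the paper's. But your inductive step is set up the wrong way round, and the obstruction you run into at the end is a genuine gap, not a technicality. Proposition~\ref{univ-ext:p} requires the \emph{quotient} of the extension to sit inside the ambient object as a \emph{direct summand} (its proof needs a splitting $B'\cong B\oplus(B'/B)$ in order to extend the surjection $E\twoheadrightarrow B$ to a surjection onto $B'$; only the sub is allowed to be an arbitrary subobject). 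You split off a simple \emph{subobject} $A\subset\sM$ with quotient $B$ of smaller length, so the inductive hypothesis hands you $B$ only as a subobject of $\sE_B$, and you are forced to try to lift $[\sM_1]$ along $\Ext^1_\sT(\sE_B,\sE_A)\to\Ext^1_\sT(B,\sE_A)$, which indeed need not be surjective. Neither of your proposed rescues works as stated: iterating universal extensions enlarges $\sE_A$, i.e.\ the sub, which does nothing to kill an obstruction living on the quotient side, and any ``transfinite'' bootstrap would leave the category of finite-length objects.

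The fix, which is what the paper does, is to split off an irreducible \emph{quotient} instead: write $0\to\sM_2\to\sM\to\sM_1\to 0$ with $\sM_1$ irreducible. Your base case (equivalently Lemma~\ref{irr-isoc:l} run inside $\sT$) gives that $\sM_1$ is a \emph{direct summand} of the $F$-invariant semi-simple object $\bigoplus_{i}(F^*)^i\sM_1\in\sT$ --- a direct summand, not merely a subobject, precisely because the irreducible case produces a semi-simple envelope. The induction hypothesis gives $\sM_2$ as a subobject of an $F$-invariant $\sN_2\in\sT$. These are exactly the hypotheses of Proposition~\ref{univ-ext:p} in the correct slots, so $\sM$ embeds into a universal extension direct sum the semi-simple envelope, with no lifting problem and no $\Ext^2$ obstruction ever appearing. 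With that single reorganisation the rest of your argument (finite-dimensionality of $\Ext^1_\sT$, the $F$-invariance of universal extensions under the autoequivalence $F^*$, and the base case) goes through and coincides with the paper's proof.
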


Finally, thanks to Theorem \ref{i-F-able-oc:t} and Theorem \ref{i-F-inv-iso:t}, we recover \cite[Proposition 2.2.4]{AD18} (see Section~\ref{ss:isoc} for the notation).

\begin{cor}[Corollary \ref{conv-oconv-mon:c}]	\label{i-conv-oconv-mon:c}
	Let $X$ be a geometrically connected variety over a finite field $\F_{p^s}$, let $x$ be an $\F_{p^s}$-point of $X$, and let $K$ be a characteristic $0$ complete discretely valued field with residue field $\F_{p^s}$. There exists a functorial commutative diagram
	\begin{center}
		\begin{tikzcd}
			1\arrow{r} & \pi_1(\Isoc(X/K)^{\mathrm{geo}},x)\arrow[r,"\Psi^*"]\arrow[d] & \pi_1(\mf{F^s\textrm{-}}\Isoc(X/K),x)\arrow[r, "p_{X*}"]\arrow[d] & \pi_1(\mf{F^s\textrm{-}}\Isoc(\F_{p^s}))\arrow[r]\arrow[d,"="] &1\\
			1\arrow{r} & \pi_1(\oi(X/K)^{\mathrm{geo}},x)\arrow[r,"\Psi^*"] & \pi_1(\mf{F^s\textrm{-}}\oi(X/K),x)\arrow[r,"p_{X*}"] & \pi_1(\mf{F^s\textrm{-}}\Isoc(\F_{p^s}))\arrow{r} & 1
		\end{tikzcd}
	\end{center}
	with exact rows.
\end{cor}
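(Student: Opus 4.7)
\emph{Plan.} The plan is to construct both rows by verifying a standard Tannakian exact-sequence criterion, to build the vertical maps from the natural functor $\alpha$, and to apply Theorems~\ref{i-F-able-oc:t} and~\ref{i-F-inv-iso:t} at the critical step that promotes ``subquotient'' to ``subobject'' in the geometric part. For the vertical maps: the functor $\alpha\colon \oi(X/K)\to \Isoc(X/K)$ is $F^s$-equivariant, hence lifts to a tensor functor $\mf{F^s\textrm{-}}\oi(X/K)\to \mf{F^s\textrm{-}}\Isoc(X/K)$, and it sends the geometric subcategory into the geometric subcategory because it commutes with pullback to the fibre at $x$. Over $\F_{p^s}$ every overconvergent isocrystal is automatically convergent, so the right column induces an equality; commutativity of the two squares is formal, and functoriality in $X$ follows from compatibility of $\alpha$ and the forgetful functors with pullback along morphisms of varieties over $\F_{p^s}$.

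For the exactness of each row I would invoke the criterion of Esnault--Hai--Sun: given a sequence $\mc{T}''\xrightarrow{u}\mc{T}\xrightarrow{v}\mc{T}'$ of tensor functors between neutral Tannakian categories with $v\circ u$ trivial, the induced sequence $1\to \pi_1(\mc{T}')\to \pi_1(\mc{T})\to \pi_1(\mc{T}'')\to 1$ is exact provided (i) $u$ is fully faithful with essential image stable under subobjects, (ii) $v(Y)$ is trivial if and only if $Y$ lies in the essential image of $u$, and (iii) every object of $\mc{T}'$ is a subobject of some $v(Y)$. In both rows $u$ is pullback along $p_X$ and $v$ is the forgetful functor. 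Conditions (i) and (ii) reduce to standard properties of Frobenius pullback and of the descent of a trivialised $F^s$-isocrystal to $\F_{p^s}$; they hold identically in the convergent and overconvergent cases.

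The main obstacle is condition (iii). By definition, the categories $\oi(X/K)^{\mathrm{geo}}$ and $\Isoc(X/K)^{\mathrm{geo}}$ consist of subquotients of $F$-able (resp.\ $F$-invariant) isocrystals, so (iii) amounts to upgrading such a subquotient to an embedding into an $F^s$-invariant object. Theorem~\ref{i-F-able-oc:t} delivers precisely this for the overconvergent row. For the convergent row, Theorem~\ref{i-F-inv-iso:t} yields the required embedding only inside the Tannakian subcategory $\langle \widetilde{\sM}\rangle^\otimes$, not in the full category; this is handled by writing $\pi_1(\mf{F^s\textrm{-}}\Isoc(X/K),x)$ as the cofiltered inverse limit of $\pi_1(\langle \widetilde{\sM}\rangle^\otimes,x)$ over all $F^s$-invariant $\widetilde{\sM}$, verifying the three conditions inside each finitely tensor-generated piece, and passing to the limit to conclude exactness of the top row.
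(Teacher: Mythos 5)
Your proposal follows essentially the same route as the paper's proof: both rows are obtained from the Tannakian exactness criterion (Proposition \ref{a-prop:exseq}, the observable-functor reformulation of the Esnault--Hai--Sun criterion you quote), faithful flatness of $p_{X*}$ comes from the section induced by the point $x$, the vertical maps come from $\alpha$, and the decisive step --- upgrading ``subquotient of an $F$-invariant object'' to ``subobject of an $F$-invariant object'' --- is supplied by Theorems \ref{F-able-oc:t} and \ref{F-inv-iso:t}. Two adjustments are needed. First, your condition (ii) is not the condition actually required in the middle: besides ``$v(Y)$ trivial iff $Y$ comes from $u$'', the criterion demands that for every $Y$ the maximal trivial \emph{subobject} of $v(Y)$ come from a subobject of $Y$. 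The paper verifies this by noting that the Frobenius structure $\Phi$ of an $F^s$-isocrystal preserves the maximal trivial subobject of the underlying isocrystal, which therefore inherits a compatible Frobenius structure; you should make this verification explicit rather than only invoking descent of trivialised objects. Second, the inverse-limit detour for the convergent row is unnecessary: $\langle\widetilde{\sM}\rangle^\otimes$ is a \emph{full} subcategory of $\Isoc(X/K)$, so the monomorphism produced by Theorem \ref{F-inv-iso:t} is already a monomorphism in $\Isoc(X/K)$ into an $F$-invariant object, which (after fixing an isomorphism $(F^*)^s\sN\simeq\sN$) lies in the essential image of $\Psi$; hence condition (iii) holds for the full category directly, exactly as in the overconvergent row. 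The clause ``in $\langle\widetilde{\sM}\rangle^\otimes$'' in Theorem \ref{F-inv-iso:t} only sharpens where the target of the embedding lives (this is what makes Lemma \ref{ext-dim:l} applicable in its proof); it does not restrict the ambient validity of the embedding. With these two corrections your argument coincides with the paper's.
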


\spa  We now discuss the method used to prove the exactness of the sequences of Tannaka groups. Let $k$ be a field, and $\mf H\xrightarrow{\beta} \mf G
 \xrightarrow{\alpha} \mf K$ 
 be functors between three Tannakian categories which are compatibly neutralized. By Tannaka duality, this datum is equivalent to 
 homorphisms $ K\xrightarrow{f} G\xrightarrow{g} H$ 
  of affine group schemes.  
 As is well know, 
 it is easy to translate in categorial terms that $f$ is a closed immersion or that $g$ is faithfully flat. It is equivalent to saying in the former case that every object of $\mf K$ is a subquotient of an object coming from $\mf G$, in the latter case that $\beta(\mf H)\subseteq \mf G$ is a full subcategory stable under the operation of taking  subobjects, see \cite[Prop.~2.21]{DM82}. Exactness in the middle is a more subtle property, and has been analysed in  
 \cite[Thm. A.1]{EHS07} (also in \cite[Thm. 5.11]{EH06}). 
The most important condition in  \cite[Thm. A.1]{EHS07} is condition (iii)(c), saying that every object of $\mf K$ should be a subobject of an object coming from $\mf G$. For semi-simple objects of $\mf K$, being a subobject or a subquotient of an object coming from $\mf G$ is the same property. In our article, using universal extensions, we are able to go outside the semi-simple range in the situations we consider.

\spa Condition (iii)(c) can also be replaced with an equivalent condition on rank $1$ objects, as explained in Proposition~\ref{a-prop:exseq}. This variant is mainly due to the work of Bialynicki--Birula--Hochschild--Mostow, \cite{BBHM63}. In Appendix~\ref{sec:tannaka}, we take the opportunity to analyse how their notion of\textit{ observable subgroup} interacts with other properties of morphisms of Tannaka groups.

\spa
{\it Acknowledgments.} This note grew out of a letter dated October 12, 2010 of the second author addressed to Alexander Beilinson with the aim of understanding
the exact sequence in the Hodge world. The second author warmly thanks Alexander Beilinson for the rich exchange at the time. The letter circulated for years. It did not use the theory of mixed Hodge modules and contained a gap in one proof, as was later noticed by Riccardo Ferrario. We thank him, together with 
Simon Pepin Lehalleur and Bruno Klingler for discussions on mixed Hodge modules, Michael Brion for initiating the proof of Lemma~\ref{lem:brion}, 
Brad Drew for explaining to us that the ind-completion of a small abelian category has enough injectives, Claude Sabbah for providing the references in \cite{Sai90}, and Nobuo Tsuzuki for suggesting a proof of Lemma \ref{series:l}. The first author would like to thank Johan Commelin for the enlightening discussions about unipotent variations of mixed Hodge structure during his visit in Freiburg. Finally, we thank Tomoyuki Abe, Jo\~ao Pedro P. dos Santos, and Shuddhodan K. Vasudevan for helpful comments on a first version of this article and Yves André for pointing out \cite{And20}.
 
 \medskip
  
 The first author was funded by the Deutsche Forschungsgemeinschaft (DFG, German Research Foundation) under Germany's Excellence Strategy – The Berlin Mathematics
 Research Center MATH+ (EXC-2046/1, project ID: 390685689) and by the Max-Planck Institute for Mathematics, the second by the Institute for Advanced Study in Princeton where this work was started.

\section{Notation}
\subsection{}
Let $\mf C$ be a locally small category (i.e. ${\rm Hom}(A,B)$ is a set for any objects $A,B\in\mf C$). We write $\rm PSh(\mf C)$ for the category of presheaves of $\mf C$, namely the category of contravariant functors from $\mf C$ to $\mf {Set}$. As in \cite[§6]{KS06}, we write $\Ind(\mf C)\subseteq \rm Psh (\mf C)$ for the full subcategory of ind-objects of $\mf C$ and $\iota_{\mf C} : \mf C\hookrightarrow \Ind(\mf C)$ for the fully faithful functor induced by the Yoneda embedding. By a slight abuse of notation, we denote with the same symbol both an object in $\mf C$ and the presheaf it represents in $\Ind(\mf C)$. If $\mf C$ is an abelian category, for a pair of objects $A,B\in \mf C$, we define $\Ext^1_{\mf C}(B,A)$ as the first extension group of $B$ by $A$. If $b:B\to B'$ is a morphism in $\mf C$, we write $b^*:\Ext^1_{\mf C}(B',A)\to \Ext^1_{\mf C}(B,A)$ for the group homomorphism induced by the pull-back of extensions along $b$.

\subsection{}\label{k-lin:n}
Let $k$ be a field and let $\mf C$ be a $k$-linear abelian category. For every pair of objects $A,B\in \mf C$, the group $\Ext^1_{\mf C}(B,A)$ is naturally a $k$-vector space. If $\lambda\in k$ and $\epsilon\in \Ext^1_{\mf C}(B,A)$, the extension class $\lambda. \epsilon$ is defined as the push-out of $\epsilon$ along $\lambda. \id:A\to A$. The category $\Ind(\mf C)$ is naturally a $k$-linear abelian category and $\iota_{\mf C} : \mf C\hookrightarrow \Ind(\mf C)$ is a $k$-linear fully faithful exact functor. Therefore, it induces a $k$-linear injective morphism $\iota_{\mf C*}:\Ext^1_{\mf C}(B,A) \hookrightarrow \Ext^1_{\Ind(\mf C)}(B,A)$.

\subsection{}
We denote by $\Vect_k$ the category of (not necessarily finite-dimensional) $k$-vector spaces. For an object $A\in \mf C$ and a $k$-vector space $V$ we define $V\otimes_k A$ as the presheaf in $\Ind(\mf C)$ which sends $T\in \mf C$ to the set $V\otimes_k\Hom_{\mf C}(T,A)$. For a vector $v\in V$, we denote by $i_v: B\to V\otimes_k B$ the presheaf morphism in $\Ind(\mf C)$ which, when evaluated at $T\in \mf C$, sends $f \in \Hom_{\mf C}(T,B)$ to $v\otimes f\in V\otimes \Hom(T,B)$.

\subsection{}\label{Tan:n}
Let $\mf C$ be a $k$-linear Tannakian category. For every object $A\in\mf C$ we write $\langle A \rangle^{\otimes}$ for the subcategory \textit{$\otimes$-generated} in $\mf C$ by $A$, namely the smallest strictly full abelian $\otimes$-subcategory of $\mf C$ containing $A$ and closed under duals and the operation of taking subquotients. We write $H^0_{\mf C}(A)$ for the $k$-vector space $\Hom_{\mf C}(\mathbbm{1}_{\mf C},A)$, where $\mathbbm{1}_{\mf C}$ is the unit-object of $\mf C$. Further, a \textit{trivial object} of $\mf C$, for us, is an object of $\mf C$ which is isomorphic to $\mathbbm{1}_{\mf C}^{\oplus n}$ for some $n\in \Z_{\geq 0}$. If $k'/k$ is a field extension we denote by $\mf C_{k'}$ the scalar extension of $\mf C$ from $k$ to $k'$, \cite[Definition 1.2.4]{Sta08}. The category $\mf C_{k'}$ is endowed with an exact faithful $\otimes$-functor $\mf C\to \mf C_{k'}$ mapping $A\mapsto k'\otimes_k A$, which preserves the dimensions of the Hom groups [\textit{ibid.}, Thm. 1.3.18]. This implies that for every $A,B\in \mf C$, there is a natural isomorphism $k'\otimes_k\Ext^1_{\mf C}(B,A)\iso \Ext^1_{\mf C_{k'}}(k'\otimes_k B,k'\otimes_k A)$. 

\subsection{}\label{tan-fun:ss}Let $\mf C$ and $\mf D$ be $k$-linear Tannakian categories. A functor $\alpha:\mf C\to \mf D$, for us, is a \textit{functor of Tannakian categories} if it is an exact $k$-linear $\otimes$-functor. Note that by \cite[Corollaire 2.10]{Del90}, this functor is faithful as well. We denote by $\alpha(\mf C)$ the essential image of $\alpha$. Finally, if $r$ is a positive integer and $V$ is an object of a Tannakian category $\mf C$, we write $\wedge^{r} V$ for the image of $\Sigma_{\sigma\in S_r}(-1)^{{\rm sgn}(\sigma)} \sigma: V^{\otimes r}\to V^{\otimes r}$. Note that $\alpha(\wedge^r V)=\wedge^r \alpha (V)$ because $\alpha$ is exact.

\section{Universal extensions}

  \begin{defn}[Universal extension class] \label{defn:univ}
 	Let $\mf C$ be a $k$-linear abelian category and let $A$ and $B$ be objects in $\mf C$. We say that a class $\epsilon_{\mf C}(B,A)\in \Ext^1_{\Ind(\mf C)}(\Ext^1_{\mf C}(B,A)\otimes B,A)$ is a \textit{universal extension class} of $B$ by $A$ if for every $v\in \Ext^1_{\mf C}(B,A)$, the class $i_{v}^*(\epsilon_{\mf C}(B,A))\in\Ext^1_{\Ind(\mf C)}(B,A)$ is equal to $\iota_{\mf C*}(v)$ (cf. Section~\ref{k-lin:n}). If $$0\to A\to \mc E_{\mf C}(B,A)\to \Ext^1_{\mf C}(B,A) \otimes B\to 0$$ is an exact sequence representing $\epsilon_{\mf C}(B,A)$, we say that $\mc E_{\mf C}(B,A)$ is a \textit{universal extension} of $B$ by $A$.
\end{defn}
We want to prove first that for every pair of objects there exists a unique universal extension up to non-canonical isomorphism. For this, we first need the following elementary lemma.
\begin{lem}\label{ext-iso:l}
	Let $A$ and $B$ be objects of $\mf C$ and let $V$ be a $k$-vector space. There exists an isomorphism of $k$-vector spaces $\Phi:\Ext^1_{\Ind(\mf C)}(V\otimes B,A)\iso \Hom_{\Vect_k}(V, \Ext^1_{\Ind(\mf C)}(B,A))$ defined by $\Phi(\epsilon)(v):=i_v^*(\epsilon)$ for every $\epsilon \in \Ext^1_{\Ind(\mf C)}(V\otimes B,A)$ and $v\in V$. 
\end{lem}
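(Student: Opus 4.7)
The plan is to verify that $\Phi$ is a well-defined $k$-linear map and then establish bijectivity by reducing, via a choice of basis of $V$, to the product formula for $\Ext^1$ over coproducts in $\Ind(\mf C)$. Well-definedness and $k$-linearity of $\Phi$ are immediate: from the definition $i_v(f)=v\otimes f$ one reads off the identities $i_{v+w}=i_v+i_w$ and $i_{\lambda v}=\lambda\cdot i_v$ of morphisms $B\to V\otimes B$ in $\Ind(\mf C)$, and since pullback of extensions is additive in the morphism, $v\mapsto i_v^*(\epsilon)$ is $k$-linear in $v$; the assignment $\epsilon\mapsto\Phi(\epsilon)$ is then manifestly $k$-linear in $\epsilon$.

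For bijectivity I would pick a $k$-basis $\{e_i\}_{i\in I}$ of $V$ and identify $V\otimes B\cong\bigoplus_{i\in I}B$ in $\Ind(\mf C)$. This identification is visible on the pointwise formula $(V\otimes B)(T)=V\otimes_k\Hom_{\mf C}(T,B)=\bigoplus_{i\in I}\Hom_{\mf C}(T,B)$, and realises the infinite coproduct as the filtered colimit of its finite partial sums $\bigoplus_{i\in J}B\in\mf C$ with $J\subset I$ finite, so it indeed lies in $\Ind(\mf C)$; under it, the morphisms $i_{e_i}:B\to V\otimes B$ are exactly the canonical coproduct inclusions.

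Choosing the same basis gives a natural isomorphism $\Hom_{\Vect_k}(V,W)\cong\prod_{i\in I}W$, $\phi\mapsto(\phi(e_i))_i$, for any $k$-vector space $W$. On the other hand, the universal property of the coproduct yields $\Hom_{\Ind(\mf C)}(\bigoplus_i B,-)=\prod_i\Hom_{\Ind(\mf C)}(B,-)$; since $\Ind(\mf C)$ is a Grothendieck abelian category with enough injectives (the fact alluded to in the Acknowledgments), this upgrades to the canonical isomorphism
\[
\Ext^1_{\Ind(\mf C)}\Bigl(\bigoplus_{i\in I}B,\,A\Bigr)\;\iso\;\prod_{i\in I}\Ext^1_{\Ind(\mf C)}(B,A),\qquad \epsilon\mapsto(i_{e_i}^*\epsilon)_{i\in I}.
\]
Under these two identifications $\Phi$ becomes the identity map of $\prod_{i\in I}\Ext^1_{\Ind(\mf C)}(B,A)$; in particular $\Phi$ is a $k$-linear bijection, and since the definition of $\Phi$ is basis-free the conclusion does not depend on the chosen basis.

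The main obstacle is the product formula for $\Ext^1$ above: it is the only step where we genuinely leave the level of $\mf C$ itself and exploit both the ind-categorical structure and the existence of enough injectives in $\Ind(\mf C)$ (taking an injective resolution $A\hookrightarrow I^\bullet$ and using that $\Hom_{\Ind(\mf C)}(\bigoplus_i B,I^\bullet)=\prod_i\Hom_{\Ind(\mf C)}(B,I^\bullet)$ computes both sides upon passing to cohomology). Everything else in the argument is either routine unwinding of the pointwise definition of $V\otimes B$ or a direct application of the universal property of a coproduct.
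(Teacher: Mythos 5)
Your proof is correct and follows essentially the same route as the paper: check $k$-linearity, choose a basis to identify $V\otimes B$ with a coproduct of copies of $B$, and reduce to the natural isomorphism $\Ext^1_{\Ind(\mf C)}(\bigoplus_{i\in I}B,A)\iso\prod_{i\in I}\Ext^1_{\Ind(\mf C)}(B,A)$, which the paper simply asserts and you justify via injective resolutions (one could equally argue with Yoneda $\Ext^1$ and exactness of coproducts in $\Ind(\mf C)$, avoiding any appeal to enough injectives).
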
 

\begin{proof} Given $\epsilon\in \Ext^1_{\Ind(\mf C)}(V\otimes B,A)$, the map 
	\ga{}{ V\to \Ext^1_{\Ind(\mf C)}(B,A), \ v \mapsto i_v^*(\epsilon) \notag}
	is $k$-linear. This shows that $\Phi$ is well-defined.
	To prove that $\Phi$ is a $k$-linear isomorphism, we choose a basis $I\subseteq V$. In coordinates, $\Phi$ becomes the natural isomorphism $$\Ext^1_{\Ind(\mf C)}(\bigoplus_{i\in I}B,A)\iso\prod_{i\in I} \Ext^1_{\Ind(\mf C)}(B,A).$$
\end{proof}

\begin{prop} \label{prop:univ}
Let $\mf C$ be a $k$-linear abelian category. For every $A,B\in \mf C$ there exists a unique universal extension of $B$ by $A$ up to (non-canonical) isomorphism.
\end{prop}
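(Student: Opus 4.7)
\medskip

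The plan is to deduce Proposition~\ref{prop:univ} almost immediately from Lemma~\ref{ext-iso:l}. Set $V := \Ext^1_{\mf C}(B,A)$ and consider the isomorphism
\[
\Phi : \Ext^1_{\Ind(\mf C)}(V\otimes B,A) \iso \Hom_{\Vect_k}(V,\Ext^1_{\Ind(\mf C)}(B,A))
\]
provided by Lemma~\ref{ext-iso:l}, namely $\Phi(\epsilon)(v)=i_v^*(\epsilon)$. Unwinding Definition~\ref{defn:univ}, a class $\epsilon \in \Ext^1_{\Ind(\mf C)}(V\otimes B,A)$ is a universal extension class of $B$ by $A$ precisely when $\Phi(\epsilon)=\iota_{\mf C*}$, where $\iota_{\mf C*} \in \Hom_{\Vect_k}(V,\Ext^1_{\Ind(\mf C)}(B,A))$ is the $k$-linear map discussed in Section~\ref{k-lin:n}.

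First I would establish existence and uniqueness of the class by simply setting $\epsilon_{\mf C}(B,A) := \Phi^{-1}(\iota_{\mf C*})$. Since $\Phi$ is a $k$-linear bijection, this element is well-defined and is the only class with the required property, giving uniqueness of the universal extension class in $\Ext^1_{\Ind(\mf C)}(V\otimes B,A)$.

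Next I would pass from classes to extensions. Any element of $\Ext^1_{\Ind(\mf C)}(V\otimes B,A)$ is represented by a short exact sequence in $\Ind(\mf C)$, so existence of a universal extension $\sE_{\mf C}(B,A)$ follows. Two extensions in $\Ind(\mf C)$ representing the same class of $\Ext^1_{\Ind(\mf C)}(V\otimes B,A)$ are isomorphic as extensions (this is the standard interpretation of $\Ext^1$ via equivalence classes of extensions, applied in the abelian category $\Ind(\mf C)$, which exists since $\Ind(\mf C)$ is a $k$-linear abelian category by Section~\ref{k-lin:n}), and this isomorphism is in general only unique up to the automorphisms of the extension, whence the non-canonicity.

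I do not expect any serious obstacle: the entire content is packaged in Lemma~\ref{ext-iso:l}, which translates the defining condition of universality into the prescription of a single linear map $V \to \Ext^1_{\Ind(\mf C)}(B,A)$; once that translation is made, both existence and uniqueness are formal. The only small point to mention is that the uniqueness is \emph{non-canonical} because extension classes classify short exact sequences only up to (not necessarily unique) isomorphism of extensions.
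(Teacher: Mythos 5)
Your proposal is correct and follows essentially the same route as the paper: both apply Lemma~\ref{ext-iso:l} with $V=\Ext^1_{\mf C}(B,A)$, observe that universality of a class $\epsilon$ is exactly the condition $\Phi(\epsilon)=\iota_{\mf C*}$, and conclude that $\Phi^{-1}(\iota_{\mf C*})$ is the unique universal extension class. Your extra remarks on passing from classes to representing short exact sequences and on the source of non-canonicity are accurate but are left implicit in the paper's argument.
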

\begin{proof}
We apply Lemma~\ref{ext-iso:l} to $V=\Ext^1_{\mf C}(B,A)$, obtaining an isomorphism $$\Phi:\Ext^1_{\Ind(\mf C)}(\Ext^1_{\mf C}(B,A)\otimes B,A)\iso \Hom_{\Vect_k}(\Ext^1_{\mf C}(B,A), \Ext^1_{\Ind(\mf C)}(B,A))$$ such that $\Phi(\epsilon)(v)=i_v^*(\epsilon)$ for every $\epsilon \in \Ext^1_{\Ind(\mf C)}(V\otimes B,A)$ and $v\in V$.
By construction, $\epsilon$ is a universal extension class if and only if $\Phi(\epsilon)=\iota_{\mf C*}$ as an element in $\Hom_{\Vect_k}(\Ext^1_{\mf C}(B,A), \Ext^1_{\Ind(\mf C)}(B,A))$. Since $\Phi$ is an isomorphism, $\Phi^{-1}( \iota_{\mf C*})$ is the unique universal extension class of $B$ by $A$. This concludes the proof.
\end{proof}

The next proposition is the main result on universal extensions we shall use in the rest of the article. 

\begin{prop}\label{univ-ext:p}  Let $\mf C$ be a $k$-linear abelian category and	$0\to A\xrightarrow{a} E\xrightarrow{b} B\to 0$ be an exact sequence in $\mf C$. If $A$ is a subobject of $A'\in \mf C$ and $B$ is a direct summand of $B'\in\mf C$, then $E$ is a subobject of $\mc E_{\mf C}(B',A')\oplus B'$, where $\mc E_{\mf C}(B',A')$ is any universal extension of $B'$ by $A'$.  
\end{prop}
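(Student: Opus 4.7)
The plan is to realize the embedding as a composition of three monomorphisms, corresponding respectively to (i) push-out along $A\hookrightarrow A'$, (ii) pull-back along $B\hookrightarrow B'$, and (iii) comparison with the universal extension of $B'$ by $A'$. Let $\eta\in\Ext^1_{\mf C}(B,A)$ be the class of the given extension, and let $a:A\hookrightarrow A'$ be the inclusion. Pushing out along $a$ yields an extension $0\to A'\to E'\to B\to 0$ of class $\eta':=a_*(\eta)$ together with a monomorphism $E\hookrightarrow E'$, since push-outs of monomorphisms are monomorphisms in any abelian category.

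For step (ii), let $\iota:B\hookrightarrow B'$ and $\pi:B'\twoheadrightarrow B$ be the inclusion and retraction witnessing $B$ as a direct summand, so $\pi\iota=\id_B$. Set $\tilde\eta:=\pi^*(\eta')\in\Ext^1_{\mf C}(B',A')$ and choose an extension $0\to A'\to\tilde E\to B'\to 0$ representing it. Because $\iota^*(\tilde\eta)=(\pi\iota)^*(\eta')=\eta'$, the extension $E'$ is the pull-back of $\tilde E$ along $\iota$, whence a monomorphism $E'\hookrightarrow\tilde E$. So far I have inclusions $E\hookrightarrow E'\hookrightarrow\tilde E$.

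For step (iii), set $V:=\Ext^1_{\mf C}(B',A')$ and $\sE:=\mc E_{\mf C}(B',A')$. By Definition~\ref{defn:univ}, pulling the universal extension class back along $i_{\tilde\eta}:B'\to V\otimes B'$ produces an extension in $\Ind(\mf C)$ representing $\iota_{\mf C*}(\tilde\eta)$, and hence a morphism $\phi:\tilde E\to\sE$ in $\Ind(\mf C)$ whose restriction to $A'$ is the canonical inclusion $A'\hookrightarrow\sE$. I then consider the graph-type map
$$(\phi,\pi_{\tilde E})\colon\tilde E\longrightarrow \sE\oplus B',$$
where $\pi_{\tilde E}:\tilde E\twoheadrightarrow B'$ is the quotient map. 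Any element in its kernel lies in $A'$ (since the second component vanishes) and is then sent to $0$ by $\phi|_{A'}=\id_{A'}$, so this map is a monomorphism; composing with $E\hookrightarrow\tilde E$ yields the desired embedding.

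The only conceptual point — and the only step that is not formal push-out/pull-back bookkeeping — is the choice of the graph map in step (iii): the map $\phi$ alone can fail to be a monomorphism (for example, when $\tilde\eta=0$ the map $i_{\tilde\eta}$ vanishes and $\phi$ collapses $B'$), but pairing $\phi$ with the quotient $\pi_{\tilde E}$ records enough information in the $B'$-factor to restore injectivity while remaining inside $\sE\oplus B'$. This, together with Proposition~\ref{prop:univ}, is what makes the argument work.
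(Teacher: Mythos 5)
Your proof is correct and follows essentially the same route as the paper's: push out along $A\hookrightarrow A'$, use the splitting of $B\subseteq B'$ to pass to an extension of $B'$ by $A'$, and compare that extension with the universal one via the map $i_{\tilde\eta}$. The only (harmless) divergence is at the very end, where the paper distinguishes the cases $v'=0$ and $v'\neq 0$ (using that $i_{v'}$ is a monomorphism in the latter case), while your graph map $(\phi,\pi_{\tilde E}):\tilde E\to \mc E_{\mf C}(B',A')\oplus B'$ treats both cases uniformly.
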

\begin{proof}
	Let $c$ be a section of the quotient $B'\twoheadrightarrow B'/B$. We push-out the exact sequence $0\to A \xrightarrow{(a,0)}E\oplus (B'/B)\xrightarrow{b\oplus c} B'\to 0$ along $A\subseteq A'$, obtaining the following commutative diagram with exact rows
	\begin{center}
		\begin{tikzcd}
		0\arrow[r] &  A \arrow[r,"{(a,0)}"] \arrow[d,hook]\arrow[dr, phantom, "\square"] & E\oplus (B'/B)\arrow[r,"b \oplus c"]\arrow[hook,d] & B'\arrow{r}\arrow[d,"="] &0\\
		0\arrow{r} & A'\arrow{r} &  E'  \arrow{r} & B'\arrow{r} & 0.
		\end{tikzcd}
	\end{center}
As $E$ is a subobject of $E'$,  it is enough to prove the final result for the lower exact sequence. Let $v'\in\Ext^1_{\mf C}(B',A')$ be the class of the extension $E'$ and let $\mc E_{\mf C}(B',A')$ be a universal extension of $B'$ by $A'$. If $v'=0$, then $E'=A'\oplus B'$ is trivially a subobject of $\mc E_{\mf C}(B',A')\oplus B'$. If $v'\neq 0$, the extension $E'$ is obtained from $\mc E_{\mf C}(B',A')$ via a pull-back along the injective morphism $i_{v'}:B'\hookrightarrow
	\Ext^1_{\mf C}(B',A')\otimes B'$. This shows that $E'$ is a subobject of $\mc E_{\mf C}(B',A')$, as we wanted.
	
\end{proof}
We end this section with some results assuring that the universal extensions are in $\mf C$, rather than in $\Ind(\mf C)$.

\begin{lem}\label{ext-dim-univ-ext:l}Let $\mf C$ be a $k$-linear abelian category such that each object is noetherian. For all $A,B\in \mf C$, the Ext group $\Ext^1_{\mf C}(B,A)$ is a finite-dimensional $k$-vector space if and only if one (or equivalently any) universal extension of $B$ by $A$ is in the essential image of $\iota_{\mf C}:\mf C\hookrightarrow \Ind(\mf C)$.
\end{lem}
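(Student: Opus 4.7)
The plan is to unpack the definition of the universal extension class (Definition~\ref{defn:univ}) together with the isomorphism of Lemma~\ref{ext-iso:l}, and to reduce both directions to the basic observation that the ind-object $V \otimes B$ (for $V := \Ext^1_{\mf C}(B,A)$) lies in the essential image of $\iota_{\mf C}$ if and only if $V$ is finite-dimensional or $B = 0$. The statement about \emph{any} universal extension then follows from the uniqueness up to isomorphism provided by Proposition~\ref{prop:univ}.

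For the forward direction, I assume $V$ is finite-dimensional and fix a basis $v_1, \ldots, v_n$. This identifies $V \otimes B \cong B^{\oplus n} \in \mf C$ and induces compatible decompositions
\[
\Ext^1_{\mf C}(V \otimes B, A) \cong \Ext^1_{\mf C}(B,A)^{\oplus n}, \qquad \Ext^1_{\Ind(\mf C)}(V \otimes B, A) \cong \Ext^1_{\Ind(\mf C)}(B,A)^{\oplus n}.
\]
By Definition~\ref{defn:univ}, the $j$-th component of the universal extension class $\epsilon_{\mf C}(B,A)$ equals $\iota_{\mf C*}(v_j)$, so the whole class lies in the image of the injection $\Ext^1_{\mf C}(B^{\oplus n}, A) \hookrightarrow \Ext^1_{\Ind(\mf C)}(B^{\oplus n}, A)$ from Section~\ref{k-lin:n}. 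Hence $\mc E_{\mf C}(B,A)$ is represented by a genuine short exact sequence in $\mf C$.

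For the reverse direction, suppose $\mc E := \mc E_{\mf C}(B,A)$ is essentially in $\mf C$. Exactness of $\iota_{\mf C}$ then forces the cokernel $V \otimes B = \mc E/A$ (computed in $\Ind(\mf C)$) to be essentially in $\mf C$ as well. If $B = 0$, then $V = 0$; otherwise, after choosing a basis $\{v_i\}_{i \in I}$ of $V$, the ind-object $V \otimes B$ is the filtered colimit of $B^{\oplus |J|}$ over finite subsets $J \subseteq I$. If $V$ were infinite-dimensional, a countable strictly increasing chain $J_1 \subsetneq J_2 \subsetneq \cdots$ would produce a strictly ascending infinite chain of subobjects $B \subsetneq B^{\oplus 2} \subsetneq B^{\oplus 3} \subsetneq \cdots$ of $V \otimes B$ (strictness from $B^{\oplus (n+1)}/B^{\oplus n} \cong B \neq 0$), contradicting the noetherian hypothesis. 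The only delicate point is ensuring that these are honest subobjects in $\mf C$, which follows from the stability of monomorphisms under filtered colimits in $\Ind(\mf C)$ together with the full faithfulness of $\iota_{\mf C}$; I do not anticipate any serious obstacle.
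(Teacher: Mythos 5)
Your argument is correct, and it reaches the conclusion by a more self-contained route than the paper. The paper's proof is a two-line reduction: it cites Deligne's Lemme 4.2.1 from \cite{Del87}, which says that under the noetherian hypothesis the essential image of $\iota_{\mf C}$ is exactly the Serre subcategory of noetherian objects of $\Ind(\mf C)$; closure under extensions and quotients then immediately gives that the universal extension lies in $\mf C$ if and only if $\Ext^1_{\mf C}(B,A)\otimes B$ does, which happens if and only if $\Ext^1_{\mf C}(B,A)$ is finite-dimensional. You instead prove the two closure properties you actually need by hand. Your forward direction is the more interesting divergence: rather than appealing to closure of the essential image under extensions (the genuinely nontrivial part of Deligne's lemma, and the only place the noetherian hypothesis enters his statement), you observe that the universal class, component by component, lies in the image of the injection $\iota_{\mf C*}:\Ext^1_{\mf C}(B^{\oplus n},A)\hookrightarrow \Ext^1_{\Ind(\mf C)}(B^{\oplus n},A)$, so the extension is equivalent to one coming from $\mf C$; this direction therefore needs no noetherian hypothesis at all. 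Your reverse direction unwinds the citation into an explicit ascending-chain argument; the points you flag as delicate (that $V\otimes B$ is a quotient of $\mc E$ in the essential image, and that the $B^{\oplus n}$ are honest subobjects) are exactly the closure-under-quotients and closure-under-subobjects halves of Deligne's statement, and your justifications via exactness and full faithfulness of $\iota_{\mf C}$ are sound. The trade-off is length against dependence on an external reference; the mathematical content is the same.
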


	\begin{proof}
	Thanks to \cite[Lemme 4.2.1]{Del87}, if $\mf C$ is an abelian category  such that each object is noetherian, the essential image of $\iota_{\mf C}:\mf C \hookrightarrow \Ind(\mf C)$ is the Serre subcategory of noetherian objects. Therefore, the property that a universal extension of $B$ by $A$ is in $\mf C$ is equivalent to the property that $\Ext^1_{\mf C}(B,A)\otimes B$ is in $\mf C$. The last condition is equivalent to   $\Ext^1_{\mf C}(B,A)$ being finite-dimensional.
	\end{proof}

\begin{lem}\label{ext-dim:l}Let $k$ be a characteristic $0$ field and let $\mf C$ be a $k$-linear Tannakian category $\otimes$-generated by one object. For every $A,B\in \mf C$, any universal extension of $B$ by $A$ is in the essential image of $\iota_{\mf C}:\mf C\hookrightarrow \Ind(\mf C)$.
\end{lem}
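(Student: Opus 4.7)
The plan is to invoke Lemma \ref{ext-dim-univ-ext:l} to reduce the statement to the assertion that $\Ext^1_{\mf C}(B,A)$ is a finite-dimensional $k$-vector space, and then to translate this into a statement about algebraic-group cohomology that becomes tractable in characteristic $0$ thanks to the Levi decomposition.

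First I would pass to a neutralization. Since $\Ext^1$ is compatible with scalar extension (cf.\ Section~\ref{Tan:n}), I may replace $k$ by an extension $k'/k$ over which $\mf C_{k'}$ admits a fiber functor $\omega$; this gives an equivalence $\mf C_{k'}\simeq \Rep_{k'}(G)$ with $G:=\underline{\rm Aut}^\otimes(\omega)$. The $\otimes$-generation hypothesis transports to $\mf C_{k'}$, so $\omega$ sends the generator to a faithful finite-dimensional representation of $G$, and $G$ embeds as a closed subgroup of some $GL_n$ over $k'$. Consequently $G$ is a linear algebraic group of finite type, and under the identification $\mf C_{k'}\simeq \Rep_{k'}(G)$ one has
$$\Ext^1_{\mf C_{k'}}(k'\otimes_k B,\,k'\otimes_k A)\;=\;H^1(G,W),$$
where $W=\omega(A)\otimes\omega(B)^\vee$ is finite-dimensional. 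It therefore suffices to prove that $H^1(G,W)$ is finite-dimensional over $k'$.

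Next I would use the Levi decomposition, available because $\Char k'=0$: write $G=L\ltimes U$ with $L$ reductive and $U$ unipotent. The Hochschild--Serre spectral sequence
$$E_2^{p,q}=H^p(L,H^q(U,W))\;\Longrightarrow\;H^{p+q}(G,W)$$
collapses onto the column $p=0$ because $\Rep_{k'}(L)$ is semisimple in characteristic $0$, so $H^p(L,-)=0$ for all $p\geq 1$. Hence $H^1(G,W)=H^1(U,W)^L$. For a unipotent algebraic group in characteristic $0$, the rational cohomology coincides with Lie algebra cohomology, $H^\bullet(U,W)=H^\bullet(\mathfrak{u},W)$, and the latter is computed by the Chevalley--Eilenberg complex $W\otimes \bigwedge^\bullet \mathfrak{u}^\vee$, whose terms are finite-dimensional vector spaces. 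Therefore $H^1(U,W)$, and a fortiori its subspace of $L$-invariants, is finite-dimensional, which is what I wanted.

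The main obstacle, as I see it, is not a single deep step but rather the need to place the characteristic-$0$ hypothesis precisely: both the existence of a Levi decomposition and the vanishing of positive $L$-cohomology rely on $\Char k'=0$, and the statement of the lemma genuinely fails in positive characteristic, where unipotent groups such as $\mathbb{G}_a$ carry infinite-dimensional $H^1$. A secondary, more technical subtlety is the passage to the neutral case, but this is handled cleanly by the scalar-extension isomorphism recorded in Section~\ref{Tan:n}, which equates finite-dimensionality before and after the base change.
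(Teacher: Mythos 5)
Your reduction is exactly the paper's: invoke Lemma~\ref{ext-dim-univ-ext:l} to turn the statement into finite-dimensionality of $\Ext^1_{\mf C}(B,A)$, note that this is insensitive to scalar extension, neutralize, and identify the Ext group with $H^1(G,W)$ for a finite-type affine group $G$ and a finite-dimensional $W$; up to that point you are reproving Lemma~\ref{lem:brion}. You then diverge in how the unipotent part is handled. The paper quotients by the unipotent radical $N$, kills $H^1(G/N,V^N)$ by linear reductivity, and then proves finiteness of $H^1$ of a unipotent group by induction on its dimension via successive extensions $1\to N\to G\to \mathbb{G}_a\to 1$ and the explicit computation $H^1(\mathbb{G}_a,k)=\Hom_{k\textrm{-}\rm{grp}}(\mathbb{G}_a,\mathbb{G}_a)=k$. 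You instead use a Levi decomposition $G=L\ltimes U$ and Hochschild's comparison theorem identifying rational cohomology of a unipotent group in characteristic $0$ with the Lie algebra cohomology of $\mathfrak u$, so that finiteness is read off from the Chevalley--Eilenberg complex. Both arguments are correct; yours imports two heavier black boxes (existence of Levi subgroups over a field of characteristic $0$, and the rational-versus-Lie-algebra cohomology comparison) in exchange for avoiding the dimension induction, and as a bonus it gives finiteness of $H^i$ in all degrees rather than just $i=1$. If you want to sidestep any worry about Levi decompositions over non-closed fields, you can simply extend scalars to the algebraic closure first, as the paper does.
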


\begin{proof}Thanks to Lemma \ref{ext-dim-univ-ext:l}, it is enough to prove that the Ext groups of $\mf C$ are finite-dimensional. This latter property is invariant by scalar extension (see Section~\ref{Tan:n}). As $\mf C$ is neutralised over a finite extension of $k$ by \cite[§2.8]{Del90}, we may assume that $\mf C$ is neutral  and  $k$ is algebraically closed. 
By  Tannaka duality, the result then  follows from Lemma~\ref{lem:brion}.
\end{proof}

\begin{lem} \label{lem:brion} Let  $k$ be an algebraically closed field of characteristic $0$. If $V$ is a  finite-dimensional representation of an affine group $G$ of finite type over $k$, the $k$-vector space $H^1(G,V)$ is finite-dimensional.
\end{lem}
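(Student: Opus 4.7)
The plan is to use the structure theory of affine algebraic groups in characteristic zero together with the Hochschild--Serre spectral sequence for rational (Hochschild) cohomology to reduce the problem to cases where finite-dimensionality is transparent. First, since $k$ is algebraically closed of characteristic zero, $G/G^0$ is a finite constant group, and rational cohomology of a finite group vanishes in positive degrees on any $\Q$-vector space by Maschke's theorem. The Hochschild--Serre spectral sequence for $1\to G^0\to G\to G/G^0\to 1$ therefore identifies $H^1(G,V)$ with $H^1(G^0,V)^{G/G^0}$, so I may assume $G$ connected. Next, by Mostow's theorem, a connected affine algebraic group in characteristic zero admits a Levi decomposition $G=L\ltimes R_u(G)$ with $L$ reductive. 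Reductive groups in characteristic zero are linearly reductive, so $H^i(L,-)=0$ for $i>0$ on rational representations. Applying Hochschild--Serre to $1\to R_u(G)\to G\to L\to 1$ yields $H^1(G,V)\cong H^1(R_u(G),V)^L$, which reduces the statement to the unipotent case.

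For a unipotent algebraic group $U$ of finite type over a field of characteristic zero, the exponential map identifies $U$ with its Lie algebra $\mathfrak{u}$ as a variety, and the Hochschild cohomology of $U$ is canonically isomorphic to the Lie-algebra cohomology $H^*(\mathfrak{u},V)$. Since $\mathfrak{u}$ and $V$ are finite-dimensional, the Chevalley--Eilenberg complex $\bigwedge^{\bullet}\mathfrak{u}^*\otimes V$ is finite-dimensional in each degree, so $H^1(\mathfrak{u},V)$ is finite-dimensional. Alternatively, one can induct on $\dim U$ using a central $\G_a$-subgroup (the connected center of a nontrivial unipotent algebraic group in characteristic zero is a nontrivial vector group) and the elementary computation $H^1(\G_a,W)\cong W/NW$, where $N$ is the nilpotent endomorphism of $W$ encoding the $\G_a$-action.

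The main obstacle I anticipate is bookkeeping: one must ensure that the Hochschild--Serre spectral sequence, the linear reductivity of $L$, and the identification of unipotent-group cohomology with Lie-algebra cohomology are used in their correct rational (Hochschild) formulations, and that the intermediate cohomology groups carry the expected $G$-module structure compatible with the outer spectral sequence. These points are standard (as in Jantzen's \textit{Representations of Algebraic Groups}), but the chain of reductions requires some care.
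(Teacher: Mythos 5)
Your argument is correct, but it takes a genuinely different route from the paper's in two places. First, the paper does not split off $G/G^0$ or invoke Mostow's Levi decomposition: it simply applies the Lyndon--Hochschild--Serre five-term exact sequence to the unipotent radical $N\subseteq G$, getting $0\to H^1(G/N,V^N)\to H^1(G,V)\to H^1(N,V)^{G/N}$, and kills the first term by linear reductivity of $G/N$ in characteristic $0$; no semidirect-product splitting is needed, only exactness at the middle term. Your use of the Levi decomposition is a strictly stronger structural input, though it does give the cleaner identification $H^1(G,V)\cong H^1(R_u(G),V)^L$. Second, and more substantively, for the unipotent case the paper reduces to trivial coefficients and inducts on $\dim G$ via quotients $G\twoheadrightarrow \G_a$, using only $H^1(\G_a,k)=\Hom_{k\textrm{-}\mathrm{grp}}(\G_a,\G_a)=k$; your primary route instead invokes Hochschild's theorem identifying rational cohomology of a unipotent group in characteristic $0$ with Lie algebra cohomology, which immediately bounds \emph{all} $H^i(G,V)$ by the finite-dimensional Chevalley--Eilenberg complex. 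That is a slicker and stronger conclusion at the cost of a less elementary citation; your fallback induction via a central $\G_a$ (with $H^1(\G_a,W)\cong W/NW$, which is correct in characteristic $0$) is essentially the paper's argument run on subgroups rather than quotients. Both versions are sound.
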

\begin{proof}
Let $N\subseteq G$ is the unipotent radical of $G$. By the Lyndon--Hochschild--Serre spectral sequence, the sequence $$0\to H^1(G/N, V^N)\to H^1(G,V)\to H^1(N,V)^{G/N} $$ is exact. The algebraic group $G/N$ is reductive and $k$ has characteristic $0$, thus the inclusion 
$W^{G/N}\hookrightarrow W$ splits for every finite-dimensional representation $W$ of $G/N$. This implies  that $H^1(G/N, V^N)=0$. We are reduced to proving that $H^1(G,V)$ is finite-dimensional when $G$ is a unipotent algebraic group. In this case, $V$ is a finite successive extension of rank $1$ trivial representations, thus we may further assume that $V$ is trivial. We argue by induction on the dimension $d$ of $G$. If $d=1$, the algebraic group $G$ is isomorphic to $\mathbb{G}_a$ and $H^1(\mathbb{G}_a,k)=\Hom_{k\textrm{-}\rm{grp}}(\mathbb{G}_a,\mathbb{G}_a)=k$, \cite[Lemma 4.21]{Jan07}.  
In general, if  $d\ge 1$, there is an exact sequence $1\to N\to G\to \mathbb{G}_a \to 1$ where $N$ is a unipotent group of dimension $(d-1)$. Thanks to Lyndon--Hochschild--Serre spectral sequence, we obtain an exact sequence $$0\to H^1(\mathbb{G}_a,k)\to H^1(G,k)\to H^1(N,k)^{G/N}.$$ The induction hypothesis  concludes the proof.

\end{proof}

\begin{rmk}
	Lemma~\ref{lem:brion} is probably well-known to experts, but we did not find a proof in the literature. We discussed the proof presented here with Michel Brion.  See also the argument after Corollary 35 in \cite{dS11} for a finiteness result in positive characteristic.
\end{rmk}

\section{Variations of mixed Hodge structure}
\subsection{}\label{VMHS:ss}
 The main goal of this section is to prove Theorem \ref{vari:t}. Let us first introduce our setting. For a smooth connected complex variety $p_X: X \to \Spec \C$, we denote by $\MHS$ the category of graded-polarisable mixed $\Q$-Hodge structures, by ${\mf M}(X)$ the category of (graded-polarisable) admissible variations of mixed $\Q$-Hodge structure (cf. \cite[§1.9]{Kas86}) over $X$, and by ${ \mf{LS}}(X)$ the category of $\Q$-local systems over $X$. 
These categories are $\Q$-linear neutral Tannakian categories. We denote by $\Q(0)$, $\underline{\Q(0)}$, and $\underline{\Q}$ the unit objects of  $\MHS$, ${\mf M}(X)$, and $\LS(X)$ respectively. There exists a natural exact $\otimes$-functor $\Psi: {\mf M}(X)\to { \mf{LS}}(X)$ which associates to an object of ${\mf M}(X)$ its underlying $\Q$-local system. Also, we have an inverse image functor $p_X^*: \MHS\to { \mf M}(X)$ which identifies the category of graded-polarizable mixed $\Q$-Hodge structure with the full subcategory of ${ \mf M}(X)$ of objects with trivial underlying local system. For $\sM \in \mf M(X)$, the maximal trivial sublocal system $H^0_{\rm Betti}(X,\Psi(\sM))\otimes \underline{\mathbb{Q}}\subset \Psi(\sM)$ carries a graded-polarizable mixed $\Q$-Hodge structure by \cite[Proposition 4.19]{SZ}. This generalizes Deligne's fixed part theorem \cite[§4.1]{D}. Even in degree $i\geq 1$, the group $H^i_{\rm Betti}(X, \Psi(\sM))$ carries a graded-polarizable mixed Hodge structure
(\cite{Z} over curves and for Gau\ss-Manin $\sM$ and \cite[Thm.~4.3]{Sai90} in general).

\subsection{} \label{ss:saito}
We shall use M. Saito's construction, which we recall now. The category $\mf M(X)$ admits a $\Q$-linear fully faithful exact functor to the category of (graded-polarizable) \textit{mixed Hodge modules} $\MHM(X)$. The category of mixed Hodge modules is endowed with a natural functor $\Psi:\MHM(X)\to \Perv(X)$, where $\Perv(X)$ is the category of perverse sheaves over $X$. The essential image of $\mf M(X)$ in $\MHM(X)$ is the full subcategory of \textit{smooth mixed Hodge modules}, i.e. of  those modules the  underlying perverse sheaf  of which is a (shifted) local system. 
M. Saito constructed a direct image functor $p_{X*}:\mathrm{D^b}(\MHM(X))\to \mathrm{D^b}(\MHS)$ which is compatible with the direct image functor for perverse sheaves, \cite[Thm.~4.3]{Sai90}. Following M. Saito, we write $\sH^ip_{X*}(-)$ for $H^i(p_{X*}(-))$. By the aforementioned compatibility, we have that for $\sM \in \mf{M}(X)$, the vector space underlying the mixed Hodge structure of $\sH^ip_{X*}(\sM)$ is $H^i_{\rm Betti}(X, \Psi(\sM))$. We shall need the following lemma.

	\begin{lem}\label{lemma4}
	For every $\sM \in \mf M(X)$, the image of the morphism $${\rm Ext}^1_{\mf M(X)}(	\underline{\Q(0)},\sM)\to {\rm Ext}^1_{\mf{LS}(X)}(\underline	\Q, \Psi(\sM))=H^1_{\rm Betti}(X, \Psi(\sM))$$ induced by $\Psi$ is $H^0_{\MHS}(\sH^1p_{X*}(\sM))$.
\end{lem}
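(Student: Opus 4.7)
The strategy is to embed $\mf M(X)$ into M.~Saito's category of mixed Hodge modules $\MHM(X)$, apply the adjunction $p_X^{*}\dashv p_{X*}$ in Saito's six-functor formalism, and exploit Beilinson's vanishing $\Ext^i_{\MHS}=0$ for $i\geq 2$. The formula to be proved is then the content of an edge map of a hypercohomology spectral sequence.

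First, I would verify that $\mf M(X)$ is closed under extensions inside $\MHM(X)$, so that
\[
\Ext^1_{\mf M(X)}(\underline{\Q(0)}, \sM) \;=\; \Ext^1_{\MHM(X)}(\underline{\Q(0)}, \sM).
\]
This reduces to the perverse-sheaf level: an extension of (shifted) local systems in $\Perv(X)$ is again a shifted local system, as one sees by taking cohomology sheaves and using the smoothness of $X$. Admissibility is automatically preserved for mixed Hodge modules.

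Next, from the adjunction $p_X^{*}\dashv p_{X*}$ on $\mathrm{D^b}\MHM$ together with $\underline{\Q(0)} = p_X^{*}\Q(0)$, I obtain
\[
\Ext^1_{\MHM(X)}(\underline{\Q(0)}, \sM) \;\cong\; \Hom_{\mathrm{D^b}\MHS}(\Q(0),\; p_{X*}\sM\,[1]),
\]
and the $t$-structure on $\mathrm{D^b}\MHS$ yields the standard hypercohomology spectral sequence
\[
E_2^{p,q} \;=\; \Ext^p_{\MHS}(\Q(0), \sH^q p_{X*}\sM) \;\Longrightarrow\; \Hom_{\mathrm{D^b}\MHS}(\Q(0),\, p_{X*}\sM[p+q]).
\]
By Beilinson's theorem, $\Ext^i_{\MHS}(-,-)=0$ for $i\geq 2$, so $E_2^{p,q}=0$ for $p\geq 2$. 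In particular, the differential $d_2\colon E_2^{0,1}\to E_2^{2,0}$ is zero. Restricting to total degree $1$, the spectral sequence degenerates into the short exact sequence
\[
0 \to \Ext^1_{\MHS}(\Q(0), \sH^0 p_{X*}\sM) \to \Ext^1_{\MHM(X)}(\underline{\Q(0)},\sM) \to H^0_{\MHS}(\sH^1 p_{X*}\sM) \to 0.
\]

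Finally, I would compare with the analogous spectral sequence on the Betti side. Because $\Vect_\Q$ is semisimple, the Betti spectral sequence degenerates trivially and identifies $\Ext^1_{\mf{LS}(X)}(\underline\Q, \Psi(\sM))$ with $H^1_{\rm Betti}(X, \Psi(\sM))$. Saito's compatibility of $p_{X*}$ on $\MHM(X)$ with $p_{X*}$ on $\Perv(X)$ (\cite[Thm.~4.3]{Sai90}) provides a commutative square whose right vertical arrow is the inclusion $H^0_{\MHS}(\sH^1 p_{X*}\sM)\hookrightarrow H^1_{\rm Betti}(X,\Psi(\sM))$. Combined with the surjectivity of the top row, this shows that the image of $\Psi_{*}$ in $H^1_{\rm Betti}(X,\Psi(\sM))$ is exactly $H^0_{\MHS}(\sH^1 p_{X*}\sM)$.

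The main obstacle I expect is the careful compatibility in the last step: tracking the edge map through Saito's identification of $p_{X*}$ on smooth mixed Hodge modules with the Betti direct image. The auxiliary step verifying that $\mf M(X)\subseteq\MHM(X)$ is closed under extensions is comparatively routine, and Beilinson's vanishing is invoked as a black box.
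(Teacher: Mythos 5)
Your proposal is correct and follows the same strategy as the paper: reduce from $\mf M(X)$ to $\MHM(X)$ using the fact that an extension of (shifted) local systems in $\Perv(X)$ is again a (shifted) local system, produce a low-degree exact sequence from a spectral sequence whose relevant $\Ext^2_{\MHS}$ term vanishes by Beilinson, and conclude surjectivity onto $H^0_{\MHS}(\sH^1p_{X*}(\sM))$. The only genuine difference is in the implementation of the spectral sequence. You invoke the adjunction $p_X^*\dashv p_{X*}$ on $\mathrm{D^b}(\MHM)$ and the hypercohomology spectral sequence for the $t$-structure on $\mathrm{D^b}(\MHS)$; the paper instead stays at the level of abelian categories, passing to the ind-completions $\Ind(\MHM(X))$ and $\Ind(\MHS)$ (which have enough injectives by Huber) and forming the Grothendieck spectral sequence for the composition of $\widetilde{\sH}^0p_{X*}$ with $H^0_{\Ind(\MHS)}$, using Huber's comparison theorems to identify the derived functors with $\sH^ip_{X*}$ and the ind-Ext groups with the original ones. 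The two routes are essentially equivalent; yours is shorter if one grants the derived six-functor adjunction in Saito's formalism, while the paper's avoids it at the cost of the ind-category bookkeeping. The compatibility issue you flag at the end (identifying the edge map with the map induced by $\Psi$ under Saito's comparison with the Betti direct image) is real but is treated with the same brevity in the paper, where it is encoded in the initial commutative diagram of functors.
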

\begin{proof}

	We have a commutative diagram of functors
	\begin{center}
		\begin{tikzpicture}[scale=2.0]
			\node (A) at (0,0) {${\rm Ind}(\MHS)$};
			\node (C) at (-1.5,0.75) {${\rm Ind}(\MHM(X))$};
			\node (D) at (1.5,0.75) {$\Vect_\Q$};
			\path[->,font=\scriptsize,>=angle 90]
			(C) edge node[below left]{$\widetilde{\sH}^0p_{X*}$} (A)
			(A) edge node[below right]{$H^0_{\Ind(\MHS)}(-)$} (D)
			(C) edge node[above]{$H^0_{\Ind(\MHM(X))}(-)$} (D);
		\end{tikzpicture}
	\end{center}
	where $\widetilde{\sH}^0p_{X*}:{\rm Ind}(\MHM(X))\to {\rm Ind}(\MHS)$ is the left Kan extension of the functor $\sH^0p_{X*}$ of Section~\ref{ss:saito}. Note that $$H^0_{\Ind(\MHM(X))}(-)=\Hom_{\Ind(\MHM(X))}(\underline{\Q(0)},-)\textrm{ and }H^0_{\Ind(\MHS)}(-)=\Hom_{\Ind(\MHS)}({\Q(0)},-).$$ The categories ${\rm Ind}(\MHM(X))$ and ${\rm Ind}(\MHS)$ have enough injectives by \cite[Prop. 1.7]{Hub93} and, by [\textit{ibid.}, Thm. 2.6], the restriction of the right derived functor $\textbf{R}^i(\widetilde{\sH}^0p_{X*}):{\rm Ind}(\MHM(X))\to {\rm Ind}(\MHS)$ to $\MHM(X)$ coincides with $\sH^ip_{X*}$ for every $i$. Since $\widetilde{\sH}^0p_{X*}$ admits a left-adjoint which is exact, we can construct the Grothendieck spectral sequence associated to the composition of $\widetilde{\sH}^0p_{X*}$ and $H^0_{\Ind(\MHS)}(-)$. We get for every $\sM\in \MHM(X)$ an exact sequence of $\Q$-vector spaces
	\ga{}{ {\rm Ext}^1 _{\Ind(\MHM(X))}( \underline{\Q(0)}, \sM)  \to H^0_{\Ind(\MHS)}(\sH^1p_{X*}(\sM)) \to  {\rm Ext}^2_{\Ind(\MHS)}(\sH^0p_{X*}(\sM)). \notag}
	By [\textit{ibid.}, Prop. 2.2], the previous exact sequence can be rewritten as
	$${\rm Ext}^1 _{\MHM(X)}( \underline{\Q(0)}, \sM)  \to H^0_{\MHS}(\sH^1p_{X*}(\sM)) \to  {\rm Ext}^2_{\MHS}(\Q(0),\sH^0p_{X*}(\sM)).$$
	Thanks to \cite[§1.10]{B}, ${\rm Ext}^2_{\MHS}(\Q(0),\sH^0p_{X*}(\sM))=0, $which implies that the morphism $${\rm Ext}^1 _{\MHM(X)}(\underline{\Q(0)}, \sM)  \to H^0_{\MHS}(\sH^1p_{X*}(\sM))$$ is surjective.
	
	To prove that ${\rm Ext}^1 _{\mf M(X)}(\underline{\Q(0)}, \sM)  \to H^0_{\MHS}(\sH^1p_{X*}(\sM))$ is surjective as well, we note that in $\Perv (X)$, an extension of two (shifted) local systems is again a (shifted) local system. Therefore, every extension of $\underline{\Q(0)}$ by $\sM$ in $\MHM(X)$ is realised in $\mf M(X)$. This finishes the proof.
	
\end{proof}

We are ready now to prove our main result on local systems.

\begin{thm}\label{vari:t}
	For an object $\sV\in \mf{LS}(X)$ the following properties are equivalent.
	\begin{itemize}
		\item[1)] $\sV$ is a subquotient of a local system of the form $\Psi(\sM)$ for some $\sM \in \mf M(X)$.
		\item[2)] The irreducible subquotients of $\sV$ are subquotients of local systems underlying variations of polarizable pure $\Q$-Hodge structure.
		\item[3)] $\sV$ is a subobject of a local system of the form $\Psi(\sM)$ for some $\sM \in \mf M(X)$.
	\end{itemize}
	
\end{thm}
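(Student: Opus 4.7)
My plan is to establish the cycle $(3) \Rightarrow (1) \Rightarrow (2) \Rightarrow (3)$. The implication $(3) \Rightarrow (1)$ is tautological. For $(1) \Rightarrow (2)$, I would use that the weight filtration $W_\bullet\sM$ of an admissible variation induces a filtration of $\Psi(\sM)$ whose graded pieces are the local systems underlying the polarizable pure variations $\gr^W\sM$; hence any irreducible subquotient of a subquotient of $\Psi(\sM)$ is an irreducible subquotient of some $\Psi(\gr^W_i \sM)$.

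The main work is in $(2) \Rightarrow (3)$, which I would prove by induction on the length of $\sV$. For the base case, if $\sV$ is irreducible it is, by hypothesis, a subquotient of $\Psi(\sN)$ for some polarizable pure variation $\sN$; Deligne's semisimplicity theorem then makes $\Psi(\sN)$ a semisimple local system, so $\sV$ is actually a direct summand of $\Psi(\sN)$, in particular a subobject. For the inductive step, I would choose a short exact sequence $0 \to \sV' \to \sV \to \sV'' \to 0$ with $\sV''$ an irreducible quotient: both $\sV'$ and $\sV''$ inherit condition~(2), so by induction $\sV' \hookrightarrow \Psi(\sM')$ for some $\sM' \in \mf M(X)$, while the base case yields $\sV''$ as a \emph{direct summand} of $\Psi(\sN)$ for some polarizable pure variation $\sN$. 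Applying Proposition~\ref{univ-ext:p} with $A = \sV'$, $A' = \Psi(\sM')$, $B = \sV''$, $B' = \Psi(\sN)$ produces an embedding $\sV \hookrightarrow \sE_{\LS(X)}(\Psi(\sN),\Psi(\sM')) \oplus \Psi(\sN)$, and the induction closes as soon as one knows that the universal extension $\sE_{\LS(X)}(\Psi(\sN),\Psi(\sM'))$ lies in the essential image of $\Psi$.

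I expect the main obstacle to be precisely this last claim, which is the substantive mixed-Hodge-theoretic input. To handle it I would set $V_0 := \Ext^1_{\LS(X)}(\Psi(\sN),\Psi(\sM')) \cong H^1_{\rm Betti}(X,\Psi(\sN^\vee \otimes \sM'))$, endow it with Saito's mixed Hodge structure $V := \sH^1 p_{X*}(\sN^\vee \otimes \sM')$, and form $V \otimes \sN := p_X^* V \otimes \sN \in \mf M(X)$. Applying Lemma~\ref{lemma4} to $V^\vee \otimes \sN^\vee \otimes \sM'$ (together with tensor-hom adjunction, and the fact that $\sH^1 p_{X*}$ commutes with tensoring by a constant Hodge structure) identifies the image of $\Ext^1_{\mf M(X)}(V\otimes \sN, \sM') \to \Ext^1_{\LS(X)}(V_0 \otimes \Psi(\sN), \Psi(\sM'))$ with $\Hom_{\MHS}(V,V)$, viewed inside $\End_\Q(V_0)$. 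The identity $\id_V$ lies in this subspace, and under the isomorphism $\Phi$ of Lemma~\ref{ext-iso:l} it corresponds precisely to the universal extension class; by the uniqueness in Proposition~\ref{prop:univ} this yields $\sE \in \mf M(X)$ with $\Psi(\sE) \cong \sE_{\LS(X)}(\Psi(\sN),\Psi(\sM'))$. The delicate point will be checking that Saito's direct image indeed recovers $V_0$ as underlying vector space and that the naturality of $\Phi$ under the exact $\otimes$-functor $\Psi$ sends $\id_V$ to the universal class rather than to some twist of it. Granted this, $\sV \hookrightarrow \Psi(\sE \oplus \sN)$, completing the induction.
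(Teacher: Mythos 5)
Your proposal is correct and follows essentially the same route as the paper: the same induction on length with Deligne's semi-simplicity for the base case, the same use of Proposition~\ref{univ-ext:p} in the inductive step, and the same application of Lemma~\ref{lemma4} to $p_X^*(V)^\vee\otimes\sN^\vee\otimes\sM'$ to realise the universal extension in $\mf M(X)$ by hitting $\id_V\in\End_{\MHS}(V)$. The ``delicate points'' you flag are exactly what the paper settles via the stated compatibility of Saito's $\sH^ip_{X*}$ with Betti cohomology and the explicit description $\Phi(\epsilon)=\id$ of the universal class.
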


\begin{proof}It is clear that 1) implies 2) and 3) implies 1). It remains to prove that 2) implies 3). To prove this implication we argue by induction on the length of a Jordan--H\"older filtration for $\sV$. If $\sV$ is irreducible, by Deligne's semi-simplicity theorem, \cite[§4.2]{D}, $\sV$ is a direct summand of a local system underlying a variation of polarisable pure $\Q$-Hodge structure. Suppose now that $\sV$ admits a non-trivial irreducible quotient $\sV\twoheadrightarrow \sW$. By 2) and the previous step, $\sW$ is a direct summand of $\Psi(\sM)$ where $\sM \in \mf M(X)$ is a variation of pure $\Q$-Hodge structure. Also, by induction, there exists $\sN \in \mf M(X)$ such that $\rm Ker(\sV \twoheadrightarrow \sW)$ is a subobject of $\Psi(\sN)$. By Proposition \ref{univ-ext:p}, the local system $\sV$ is a subobject of $\sE_{\LS(X)}(\Psi(\sN),\Psi(\sM))\oplus\Psi(\sN)$ where $\sE_{\LS(X)}(\Psi(\sN),\Psi(\sM))$ is a universal extension of $\Psi(\sN)$ by $\Psi(\sM)$. As $\LS(X)$ is a Tannakian category with finite-dimensional Ext groups, 
	$\sE_{\LS(X)}(\Psi(\sN),\Psi(\sM))$ is an object of $\LS(X)$ 
by Lemma~\ref{ext-dim-univ-ext:l}.  It remains to show that $\sE_{\LS(X)}(\Psi(\sN),\Psi(\sM))$ is in the essential image of $\Psi: \mf M(X)\to \LS(X)$.  To this aim we use Lemma \ref{lemma4}.
\spa

We write $E$ for the mixed Hodge structure $\sH^1p_{X*}(\sN^\vee \otimes \sM)$ on the $\Q$-vector space $$H^1_{\rm Betti}(X,\Psi(\sN)^\vee\otimes\Psi(\sM))={\rm Ext}^1_{\mf{LS}(X)}(\Psi(\sN), \Psi(\sM)).$$ If $\Phi:\Ext^1_{\mf{LS}(X)}(\Psi(E)\otimes \Psi(\sN),\Psi(\sM))\iso \End_{\Vect_k}(\Psi(E))$ is the isomorphism of Lemma \ref{ext-iso:l}, the universal extension class $\epsilon$ of $\Psi(\sN)$ by $\Psi(\sM)$ is such that $\Phi(\epsilon)=\id$. On the other hand, applying Lemma \ref{lemma4} to $p_X^*(E)^\vee \otimes\sN^\vee\otimes \sM  \in \mf M(X)$, we deduce that the morphism 
	$$\Psi_*:{\rm Ext}^1_{\mf M(X)}(\underline{\Q	(0)},p_X^*(E)^\vee \otimes\sN^\vee\otimes \sM )\to H^0_{\MHS}(E^\vee \otimes E)={\rm End}_{\MHS}(E)$$ induced by $\Psi$ is surjective. In particular, the identity $\id \in \End_{\Vect_\Q}(\Psi(E))$ is in the image of $\Psi_*$. Combining these two facts, and exploiting the natural isomorphism
	$${\rm Ext}^1_{\mf M(X)}(p_X^*(E) \otimes\sN,\sM)\iso{\rm Ext}^1_{\mf M(X)}(\underline{\Q	(0)},p_X^*(E)^\vee \otimes\sN^\vee\otimes \sM ),$$ we deduce that the universal extension $$0\to \Psi(\sM)\to \sE_{\LS(X)}(\Psi(\sN),\Psi(\sM))\to {\rm Ext}^1_{\mf{LS}(X)}(\Psi(\sN), \Psi(\sM))\otimes \Psi(\sN)\to 0$$ comes from an extension $$0\to \sM\to \sE\to p_X^*(E)\otimes \sN\to 0$$ in $\mf{M}(X)$. This concludes the proof.

\end{proof}
\begin{rmk}
Similarly, in Theorem \ref{vari:t} one may replace the category $\LS(X)$ with $\Perv(X)$ and $\textbf{M}(X)$ with $\MHM(X)$ and obtain a variant for perverse sheaves.\end{rmk}

\subsection{} \label{ss:neut}
Let $x\in X(\C)$ be a complex point. It neutralises the categories introduced in Section \ref{VMHS:c}. This defines fundamental groups $\pi_1(\mf M(X), x)$ and $\pi_1(\mf{LS}(X),x)$. Recall that the  category $\MHS$ is also naturally neutralised by sending a mixed $\Q$-Hodge structure to its underlying vector space. This defines a fundamental group $\pi_1(\MHS)$. All these fundamental groups are affine group schemes over $\Q$. One has a complex of such 
\begin{equation*}
\pi_1(\mf{LS}(X),x)\xrightarrow{\Psi_*} \pi_1(\mf M(X),x)\xrightarrow{p_{X*}} 
\pi_1(\MHS).
\end{equation*}
As there are $\Q$-local systems which are not subquotients of variations of mixed $\Q$-Hodge structure, $\Psi_*$ is not injective in general. Let $\mf{LS}(X)^{\mathrm{hdg}}$ be the full
subcategory of $\mf{LS}(X)$ consisting of the  objects which are $\Q$-local systems which are subquotients  as local systems of  variations of mixed $\Q$-Hodge structure. The affine group scheme $\pi_1(\mf{LS}(X)^{\mathrm{hdg}}, x)$ is the image of $\pi_1(\mf{LS}(X),x)$ in $\pi_1(\mf M(X), x)$.

\begin{cor} \label{VMHS:c}
 The sequence $$1\to\pi_1(\mf{LS}(X)^{\mathrm{hdg}},x)\xrightarrow{\Psi^*} \pi_1(\mf M(X),x)\xrightarrow{p_{X*}} 
 \pi_1(\MHS)\to 1$$ is exact. 
\end{cor}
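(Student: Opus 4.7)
Given Theorem \ref{vari:t}, the corollary is a formal consequence of the general Tannakian exactness criterion \cite[Thm.~A.1]{EHS07} (or equivalently Proposition \ref{a-prop:exseq} of the appendix). My plan is to apply this criterion to the composition
\[
\MHS \xrightarrow{p_X^*} \mf M(X) \xrightarrow{\Psi} \mf{LS}(X)^{\mathrm{hdg}},
\]
where all three $\Q$-linear Tannakian categories are compatibly neutralised by the fibre functor at $x$. Note that $\Psi \circ p_X^*$ lands in trivial objects, since by Section~\ref{VMHS:ss} the underlying local system of $p_X^*(V)$ is constant.

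First, I would verify that $p_{X*}$ is faithfully flat by checking that $p_X^* : \MHS \to \mf M(X)$ is fully faithful with essential image closed under subobjects. Both properties follow from the identification of $p_X^*(\MHS)$ with the full subcategory of $\mf M(X)$ of variations with trivial underlying local system, recalled in Section~\ref{VMHS:ss}: any subobject in $\mf M(X)$ of such an object again has trivial underlying local system, hence lies in $p_X^*(\MHS)$. Next, I would check that $\Psi^*$ induces a closed immersion onto $\pi_1(\mf{LS}(X)^{\mathrm{hdg}},x)$: by the very definition of $\mf{LS}(X)^{\mathrm{hdg}}$, every object there is a subquotient of some $\Psi(\sM)$ with $\sM \in \mf M(X)$, which by \cite[Prop.~2.21]{DM82} translates exactly to the closed-immersion property.

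The delicate input is exactness in the middle. As emphasized in the introduction, the nontrivial hypothesis in the criterion \cite[Thm.~A.1]{EHS07} is condition (iii)(c), requiring every object of $\mf{LS}(X)^{\mathrm{hdg}}$ to be a \emph{subobject} of some $\Psi(\sM)$, not merely a subquotient. This is precisely the implication 1)~$\Rightarrow$~3) of Theorem~\ref{vari:t}. Feeding Theorem~\ref{vari:t} into the criterion produces the exact sequence.

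The hard part is therefore not the corollary but the underlying Theorem~\ref{vari:t}: the entire purpose of developing universal extensions in Section~3 and combining them with M.~Saito's construction via Lemma~\ref{lemma4} was to upgrade \emph{subquotient of} $\Psi(\sM)$ to \emph{subobject of} $\Psi(\sM)$, which is exactly what allows middle exactness to be deduced formally from the general Tannakian machinery.
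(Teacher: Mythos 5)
Your overall strategy coincides with the paper's: establish faithful flatness of $p_{X*}$, then apply Proposition~\ref{a-prop:exseq} (equivalently \cite[Thm.~A.1]{EHS07}) for exactness in the middle, with Theorem~\ref{vari:t} supplying the ``subobject'' condition. Your route to faithful flatness (the essential image of $p_X^*$ is full and closed under subobjects, then \cite[Prop.~2.21]{DM82}) is valid but slightly longer than the paper's one-line argument, which simply observes that the fibre functor at $x$ gives a functor $\mf M(X)\to\MHS$ splitting $p_{X*}$.

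There is, however, a genuine gap in your treatment of exactness in the middle. The criterion of Proposition~\ref{a-prop:exseq} has \emph{two} conditions beyond the closed immersion/faithful flatness hypotheses: (i) that $\Psi$ is observable, which you correctly extract from the implication 1)~$\Rightarrow$~3) of Theorem~\ref{vari:t} together with Corollary~\ref{a-sub:c}; and (ii) that for every $\sM\in\mf M(X)$ the maximal trivial subobject of $\Psi(\sM)$ comes from a subobject of the form $p_X^*(U)\subseteq\sM$ with $U\in\MHS$. You assert that condition (iii)(c) is ``the nontrivial hypothesis'' and treat the rest as formal, but condition (ii) is not formal: it is precisely the statement that the maximal constant sublocal system $H^0_{\rm Betti}(X,\Psi(\sM))\otimes\underline{\Q}\subseteq\Psi(\sM)$ underlies a sub-variation of mixed Hodge structure with trivial underlying local system, i.e.\ the theorem of the fixed part in the admissible mixed setting (\cite[Proposition 4.19]{SZ}, generalizing Deligne's \cite[§4.1]{D}), as recalled in Section~\ref{VMHS:ss}. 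Without this Hodge-theoretic input the criterion does not apply --- one only gets normality of the image of $\Psi^*$, not that $G/K\to\pi_1(\MHS)$ is a closed immersion. Once you add this verification, your proof is complete and agrees with the paper's.
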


\begin{proof}
The functor $\mf M(X)\to\MHS$ which sends an object $\sM \in \mf M(X)$ to its fibre at $x$ induces a splitting of the morphism
$p_{X*}$, thus  $p_{X*}$ is
faithfully flat. To prove the exactness in the middle we apply Proposition \ref{a-prop:exseq}. By Theorem \ref{vari:t}, every object in $\mf{LS}(X)^{\mathrm{hdg}}$ is in fact a subobject of a local system underlying an admissible variation of mixed $\Q$-Hodge structure. This shows that $\Psi$ is observable. To conclude, it is enough to note that, as recalled in Section~\ref{VMHS:ss}, the maximal trivial subobject $H^0_{\rm Betti}(X,\Psi(\sM))\otimes \underline{\mathbb{Q}}\subset \Psi(\sM)$ carries a graded-polarizable mixed $\Q$-Hodge structure. Alternatively, one can prove that $\Psi$ is observable by combining Deligne's semi-simplicity theorem and Lemma \ref{a-suf-obs:l}.

\end{proof}
\begin{rmk}
In Section~2 of {\it {The Hodge theoretic fundamental group and its cohomology}} in The geometry of algebraic cycles \textbf{9} (2010), 3--22, D. Arapura
defines the Tannakian category of \textit{enriched local systems} by the axioms (E1)--(E4) and claims in Theorem 2.6 of {\it loc. cit.} an exact sequence like the one in Corollary \ref{VMHS:c}. Unfortunately, the proof is not correct (the statement “An element of $im$...” is wrong), thus the Hodge theoretic consequences of it (Theorem~3.8 and Theorem~5.2) are not proved. One way to remedy this is to add “Deligne's semi-simplicity theorem” to the list of axioms, say 
\begin{itemize}
\item[(E5):] $\phi$ preserves semi-simplicity.
\end{itemize}
Thanks to Lemma \ref{a-suf-obs:l}, (E5) assures that $\phi$ is observable and then one can apply Proposition \ref{a-prop:exseq}.
\end{rmk}

\section{Isocrystals}
\subsection{}Let $K$ be a complete discretely valued field of mixed characteristic $(0,p)$ and perfect residue field $k$ and let $X$ be a variety over $k$. In this section we study the categories $\Isoc(X/K)$ and $\oi(X/K)$ of $K$-linear convergent and overconvergent isocrystals over $X$ (cf. \cite{Ogu84} and \cite{Ber}). We write $F$ for the absolute Frobenius of $X$. This defines, via pullback, autoequivalences $F^*$ on the two categories of isocrystals (see \cite[Cor. 4.10]{Ogu84} and \cite[Cor. 6.2]{Laz}).
	
\begin{defn}\label{F-able:d}We say that a convergent (resp. overconvergent) isocrystal $\sM$ over $X$ is \textit{$F$-able} if for every irreducible subquotient $\sN$ of $\sM$ there exists $n>0$ such that $(F^*)^n\sN \simeq \sN$. We write $\Isoc_{F}^{(\dag)}(X/K)$ for the category of $K$-linear $F$-able (over)convergent isocrystals. Also, we say that a $K$-linear convergent or overconvergent isocrystal $\sM$ is \textit{$F$-invariant} if $F^*\sM \simeq \sM$.
\end{defn}

Every subobject of an $F$-invariant convergent or overconvergent isocrystal is $F$-able (see for example \cite[Corollary 3.1.8]{Dad}). In this section, we want to prove that the converse is also true for $F$-able  overconvergent isocrystals. We start with the next lemma.

\begin{lem}\label{irr-isoc:l}
	Every semi-simple $F$-able convergent (resp. overconvergent) isocrystal $\sM$ is the direct summand of an $F$-invariant semi-simple convergent (resp. overconvergent) isocrystal $\sN$.
\end{lem}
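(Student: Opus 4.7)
The plan is to decompose $\sM$ into irreducibles, then ``average'' each irreducible over its Frobenius orbit. Since $\sM$ is assumed semi-simple, write $\sM \cong \bigoplus_{i=1}^{k} \sM_i$ with each $\sM_i$ irreducible. By $F$-ability, for every $i$ there exists an integer $n_i > 0$ with $(F^*)^{n_i}\sM_i \cong \sM_i$.

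For each $i$, I would then set
\[
\sN_i \;:=\; \bigoplus_{j=0}^{n_i-1} (F^*)^j \sM_i.
\]
Applying $F^*$ permutes the summands cyclically (using the chosen isomorphism $(F^*)^{n_i}\sM_i \cong \sM_i$ to close up the cycle), so $F^* \sN_i \cong \sN_i$. Since $F^*$ is an autoequivalence of the category of (over)convergent isocrystals by \cite[Cor.~4.10]{Ogu84} and \cite[Cor.~6.2]{Laz}, each pullback $(F^*)^j \sM_i$ remains irreducible, and therefore $\sN_i$ is semi-simple. The object $\sM_i$ sits as the $j=0$ summand of $\sN_i$, in particular as a direct summand.

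Finally, take $\sN := \bigoplus_{i=1}^{k} \sN_i$. This is a finite direct sum of semi-simple, $F$-invariant objects, so it is itself semi-simple and $F$-invariant, and $\sM \cong \bigoplus_i \sM_i$ is evidently a direct summand of $\sN$.

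There is really no serious obstacle here: the only thing to be careful about is checking that $F^*$ preserves irreducibility (which is automatic from it being an exact autoequivalence) and that the cyclic-permutation isomorphism $F^* \sN_i \iso \sN_i$ can actually be written down, which amounts to fixing once and for all a choice of isomorphism $(F^*)^{n_i}\sM_i \iso \sM_i$ for each $i$.
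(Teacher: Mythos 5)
Your proposal is correct and follows essentially the same route as the paper: reduce to the case of an irreducible $\sM$ and take $\sN=\bigoplus_{j=0}^{n-1}(F^*)^j\sM$, noting that exactness of the autoequivalence $F^*$ preserves irreducibility so that $\sN$ is semi-simple and $F$-invariant. The only difference is that you spell out the reduction to the irreducible case explicitly, which the paper compresses into ``we may assume $\sM$ irreducible.''
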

\begin{proof}
We may assume that $\sM$ irreducible. By the assumption, there exists $n>0$ such that $(F^*)^n\sM \simeq \sM$. The isocrystal $\sM$ is then a direct summand of $\sN:=\bigoplus_{i=0}^{n-1} (F^*)^i\sM$, which is $F$-invariant. Since $F^*$ is an autoequivalence of the category of convergent (resp. overconvergent) isocrystals, every summand $(F^*)^i\sM$ is irreducible. Therefore $\sN$ is semi-simple.
\end{proof}

We are now ready to prove the main theorem of this section.

\begin{thm} 
	\label{F-able-oc:t}
An overconvergent isocrystal over a variety $X$ over $k$ is $F$-able if and only if it can be embedded into an $F$-invariant overconvergent isocrystal.
\end{thm}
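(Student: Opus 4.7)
The forward implication is immediate: any subobject of an $F$-invariant overconvergent isocrystal is $F$-able, as recorded just before Lemma \ref{irr-isoc:l}.

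For the converse, I would argue by induction on the length $\ell$ of a Jordan--H\"older filtration of the $F$-able object $\sM$, using Proposition \ref{univ-ext:p} as the main engine. The base case $\ell = 1$ is exactly Lemma \ref{irr-isoc:l}, which exhibits $\sM$ as a direct summand of the $F$-invariant $\bigoplus_{i=0}^{n-1}(F^*)^i\sM$ for suitable $n$. For the inductive step, choose an irreducible quotient $B$ of $\sM$ and form
\[
0 \to A \to \sM \to B \to 0.
\]
Both $A$ and $B$ are $F$-able as subquotients of $\sM$, so the inductive hypothesis applied to $A$ (of length $\ell-1$) and the base case applied to $B$ supply embeddings $A \hookrightarrow A'$ and $B \hookrightarrow B'$ into $F$-invariant overconvergent isocrystals $A'$ and $B'$. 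Proposition \ref{univ-ext:p} then realises $\sM$ as a subobject of $\sE_{\oi(X/K)}(B', A') \oplus B'$ for any universal extension $\sE_{\oi(X/K)}(B', A')$ of $B'$ by $A'$, and it suffices to check that this ambient object lies in $\oi(X/K)$ (rather than merely in $\Ind(\oi(X/K))$) and is itself $F$-invariant.

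The $F$-invariance is essentially formal: since $F^*$ is an exact autoequivalence of $\oi(X/K)$ and both $A'$ and $B'$ are $F$-invariant, the pull-back $F^*\sE_{\oi(X/K)}(B', A')$ is again a universal extension of $B'$ by $A'$, and the uniqueness statement of Proposition \ref{prop:univ} forces $F^*\sE_{\oi(X/K)}(B', A') \simeq \sE_{\oi(X/K)}(B', A')$. The genuine content is the assertion that $\sE_{\oi(X/K)}(B', A')$ actually lives in $\oi(X/K)$; by Lemma \ref{ext-dim-univ-ext:l} this reduces to the finite-dimensionality of $\Ext^1_{\oi(X/K)}(B', A')$, equivalently of $\Ext^1_{\oi(X/K)}(\mathbbm{1}, (B')^\vee \otimes A')$. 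Here Kedlaya's theorem from \cite{Ked06} enters: $(B')^\vee \otimes A'$ is $F$-invariant and hence admits a Frobenius structure, so its rigid cohomology is finite-dimensional, yielding the required finiteness of $\Ext^1$.

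The hard part of this plan is precisely this finiteness step: general $\Ext^1$ groups of overconvergent isocrystals can be infinite-dimensional (see \cite{Cre98}), and the argument only succeeds because the induction has been arranged so that the universal extension is always formed between two $F$-invariant objects, to which Kedlaya's finiteness theorem is applicable. Once that point is secured, the rest is a clean induction driven by the general formalism of universal extensions developed in Section 3.
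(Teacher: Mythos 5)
Your proposal is correct and follows essentially the same route as the paper: induction on Jordan--H\"older length, base case via Lemma \ref{irr-isoc:l}, the inductive step via Proposition \ref{univ-ext:p}, finiteness of the relevant $\Ext^1$ via Kedlaya's theorem together with Lemma \ref{ext-dim-univ-ext:l}, and $F$-invariance of the universal extension from the fact that $F^*$ is an autoequivalence combined with the uniqueness in Proposition \ref{prop:univ}. The only (immaterial) difference is that you form the universal extension in $\oi(X/K)$ while the paper works in the full subcategory $\oFable(X/K)$; since that subcategory is full and closed under extensions, the two universal extensions coincide.
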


\begin{proof}We have already seen that the second condition implies the first one, thus we have to prove the converse. Let $\sM^\dag$ be an $F$-able overconvergent isocrystal. We prove the statement by induction on the length of a Jordan--H\"older filtration of $\sM^\dag$. If $\sM^\dag$ is irreducible, thanks to Lemma \ref{irr-isoc:l}, it is the direct summand of an $F$-invariant overconvergent isocrystal. Suppose now that $\sM^\dag$ is an extension of an irreducible overconvergent isocrystal $\sM^\dag_1$ by an overconvergent isocrystal $\sM^\dag_2$. By the previous step, $\sM^\dag_1$ is a direct summand of an $F$-invariant overconvergent isocrystal $\sN^\dag_1$, while, by the induction assumption, 
 $\sM^\dag_2$ embeds into an $F$-invariant overconvergent isocrystal $\sN^\dag_2$.
 
 \spa By Proposition \ref{univ-ext:p}, the overconvergent isocrystal $\sM^\dag$ is then a subobject of $\sE_{\oFable(X/K)}(\sN^\dag_2,\sN^\dag_1)\oplus \sN^\dag_2$, where $\sE_{\oFable(X/K)}(\sN^\dag_2,\sN^\dag_1)$ is a universal extension of $\sN^\dag_2$ by $\sN^\dag_1$. By \cite{Ked06}, the Ext groups of $\oFable(X/K)$ are finite-dimensional $K$-vector spaces. Thus, by Lemma \ref{ext-dim-univ-ext:l}, the ind-object $\sE_{\oFable(X/K)}(\sN^\dag_2,\sN^\dag_1)$ is actually an object of $\oFable(X/K)$. It remains to show that it is $F$-invariant. This follows from the fact that the Frobenius pull-back $F^*$ induces an autoequivalence on $\oFable(X/K)$, thus $$F^*(\sE_{\oFable(X/K)}(\sN^\dag_2,\sN^\dag_1))=\sE_{\oFable(X/K)}(F^*\sN^\dag_2,F^*\sN^\dag_1)\simeq \sE_{\oFable(X/K)}(\sN^\dag_2,\sN^\dag_1),$$ as we wanted.
\end{proof}

\begin{rmk} \label{other-cases:r}We preferred to present Theorem \ref{F-able-oc:t} for $F$-able overconvergent isocrystals, but the proof works unchanged in various other cases. Indeed, we have simply used the fact that $\oFable(X/K)$ is a $K$-linear abelian category with noetherian objects and finite-dimensional Ext groups and $F^*:\oFable(X/K)\to \oFable(X/K)$ is an autoequivalence. These conditions are satisfied, for example,  by the categories of $\ell$-adic lisse sheaves, $\ell$-adic perverse sheaves, and arithmetic holonomic $\mathcal{D}$-modules.
\end{rmk}

To prove one of the consequences of Theorem \ref{F-able-oc:t}, namely Corollary \ref{ff-Fable:cor} we need another general lemma on Tannakian categories.

\begin{lem}\label{H0:l}
	Let $\alpha: \mf C\to\mf D$ be a functor of Tannakian categories. For every $A,B\in \mf C$ such that $A\subseteq B$, the inclusion $$\alpha_*(H^0_{\mf C}(A))\subseteq H^0_{\mf D}(\alpha(A))\cap \alpha_*(H^0_{\mf C}(B))$$ is an equality. In particular, if $\alpha_*:H^0_{\mf C}(B)\to H^0_{\mf D}(\alpha(B))$ is surjective, then $\alpha_*:H^0_{\mf C}(A)\to H^0_{\mf D}(\alpha(A))$ is surjective as well.
\end{lem}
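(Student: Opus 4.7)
The plan is to unpack the statement in terms of morphisms from the unit object and then exploit that a functor of Tannakian categories is exact and faithful.

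More concretely, I would start with an element of the right-hand side, which by definition is of the form $\alpha(f)$ for some $f \in \Hom_{\mf C}(\mathbbm{1}_{\mf C}, B) = H^0_{\mf C}(B)$, with the property that $\alpha(f) : \mathbbm{1}_{\mf D} \to \alpha(B)$ factors through the subobject $\alpha(A) \hookrightarrow \alpha(B)$. The goal is to show that $f$ itself already factors through $A \hookrightarrow B$ in $\mf C$, as this would exhibit $\alpha(f)$ as an element of $\alpha_*(H^0_{\mf C}(A))$ and prove the reverse inclusion (the forward inclusion being obvious from functoriality).

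Next I would translate the factoring condition via the quotient $q : B \twoheadrightarrow B/A$: $f$ factors through $A$ if and only if $q \circ f = 0$. Applying $\alpha$, and using that $\alpha$ is exact (so $\alpha(q) : \alpha(B) \twoheadrightarrow \alpha(B/A)$ is the quotient by $\alpha(A)$), the hypothesis that $\alpha(f)$ factors through $\alpha(A)$ becomes $\alpha(q) \circ \alpha(f) = \alpha(q \circ f) = 0$. Since a functor of Tannakian categories is faithful (Section~\ref{tan-fun:ss}, invoking \cite[Corollaire 2.10]{Del90}), this forces $q \circ f = 0$, and hence $f$ factors through $A$, as desired.

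For the \emph{in particular} clause, I would simply chase an arbitrary element $g \in H^0_{\mf D}(\alpha(A))$ through the inclusion $\alpha(A) \hookrightarrow \alpha(B)$: its image lies in $H^0_{\mf D}(\alpha(B))$, which by the surjectivity assumption equals $\alpha_*(H^0_{\mf C}(B))$, and then the equality just proved shows that this image lies in $\alpha_*(H^0_{\mf C}(A))$. Since $H^0_{\mf D}(\alpha(A)) \hookrightarrow H^0_{\mf D}(\alpha(B))$ is injective, this means $g$ itself lies in $\alpha_*(H^0_{\mf C}(A))$. No real obstacle is expected: the only subtlety is invoking faithfulness of $\alpha$, which is automatic in the Tannakian setting.
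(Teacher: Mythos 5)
Your proof is correct, but it takes a genuinely different route from the paper's. The paper proves the lemma by Tannaka duality: it first extends scalars so that $\mf D$ acquires a fibre functor $\omega_{\mf D}$, sets $\omega_{\mf C}=\omega_{\mf D}\circ\alpha$, and then translates the statement into the identity $H^0(\pi_1(\mf C,\omega_{\mf C}),V)=H^0(\pi_1(\mf D,\omega_{\mf D}),V)\cap H^0(\pi_1(\mf C,\omega_{\mf C}),W)$ for the tautological representations $V\subseteq W$ attached to $A\subseteq B$, where it is an obvious statement about invariant vectors. You instead argue purely diagrammatically: an element of the intersection is $\alpha_*(f)$ for some $f\in H^0_{\mf C}(B)$ with $\alpha(q\circ f)=\alpha(q)\circ\alpha(f)=0$ (using exactness to identify $\alpha(B/A)$ with $\alpha(B)/\alpha(A)$ and the factorization criterion through a kernel), and faithfulness of $\alpha$ then reflects this to $q\circ f=0$, i.e.\ $f$ factors through $A$. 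Your handling of the \emph{in particular} clause, including the injectivity of $H^0_{\mf D}(\alpha(A))\to H^0_{\mf D}(\alpha(B))$ from left-exactness of $\Hom(\mathbbm{1}_{\mf D},-)$, is also fine. What your approach buys is economy and generality: it needs nothing beyond the fact that $\alpha$ is an exact faithful $k$-linear functor between abelian categories, so it avoids the scalar-extension machinery of \cite{Sta08} and the existence of fibre functors entirely, and would apply verbatim outside the Tannakian setting. What the paper's approach buys is consistency with the group-theoretic viewpoint used throughout the appendix, where the same representation-theoretic picture is reused (e.g.\ in the proof of Proposition \ref{a-prop:norm}).
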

\begin{proof}
	It is enough to prove the result after extending the scalars of $\mf C$ (see Section~\ref{Tan:n}). In particular, we may assume that $\mf D$ admits a fibre functor $\omega_{\mf D}$. The functor $\omega_{\mf C}:=\omega_{\mf D} \circ \alpha$ is a fibre functor for $\mf C$ because $\alpha$ is exact and faithful by assumption (note that the convention in Section~\ref{tan-fun:ss} is in force). We write $\pi_1(\mf C, \omega_{\mf C})$ and $\pi_1(\mf D, \omega_{\mf D})$ for the automorphism groups of $\omega_{\mf C}$ and $\omega_{\mf D}$ respectively. Let $V$ and $W$ be the tautological representations of $\pi_1(\mf C, \omega_{\mf C})$ on $\omega_{\mf C}(A)$ and $\omega_{\mf C}(B)$ respectively. The functor $\alpha$ induces a morphism $\alpha^*:\pi_1(\mf D, \omega_{\mf D})\to \pi_1(\mf C, \omega_{\mf C})$, which, in turn, induces representations of $\pi_1(\mf D, \omega_{\mf D})$ on the vector spaces $V$ and $W$. The final result then follows from the identity $$H^0(\pi_1(\mf C, \omega_{\mf C}),V)=H^0(\pi_1(\mf D, \omega_{\mf D}),V)\cap H^0(\pi_1(\mf C, \omega_{\mf C}),W)$$ in $H^0(\pi_1(\mf D, \omega_{\mf D}),W).$ 
\end{proof}

\begin{cor}\label{ff-Fable:cor}If $X$ is a smooth variety, the natural functor $\alpha:\oFable(X/K)\to \Isoc(X/K)$ is fully faithful.
\end{cor}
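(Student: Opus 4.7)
The plan is to proceed as follows. Since $\alpha$ is an exact $K$-linear tensor functor between Tannakian categories, it is automatically faithful by \cite[Corollaire 2.10]{Del90}, so only fullness needs proof. Using that $\oFable(X/K)$ is stable under duals and tensor products and that $\Hom(\sM,\sN) = H^0(\sM^\vee \otimes \sN)$, fullness of $\alpha$ reduces to showing that for every $\sL \in \oFable(X/K)$ the natural map
$$\alpha_*: H^0_{\oi(X/K)}(\sL) \to H^0_{\Isoc(X/K)}(\alpha(\sL))$$
is surjective (injectivity is automatic).

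Next, I would embed $\sL$ into an $F$-invariant overconvergent isocrystal $\widetilde{\sL}$ using Theorem~\ref{F-able-oc:t}, and apply Lemma~\ref{H0:l} to $\alpha$ with the inclusion $\sL \subseteq \widetilde{\sL}$: this reduces the surjectivity statement for $\sL$ to the corresponding one for $\widetilde{\sL}$. Thus it suffices to prove that $\alpha_*: H^0_{\oi(X/K)}(\widetilde{\sL}) \to H^0_{\Isoc(X/K)}(\alpha(\widetilde{\sL}))$ is surjective when $\widetilde{\sL}$ is $F$-invariant.

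For this, I would choose an isomorphism $\phi: F^*\widetilde{\sL} \iso \widetilde{\sL}$, making $(\widetilde{\sL},\phi)$ an overconvergent $F$-isocrystal and $(\alpha\widetilde{\sL},\alpha\phi)$ a convergent $F$-isocrystal. Set $V := H^0_{\Isoc(X/K)}(\alpha\widetilde{\sL})$; restricting horizontal sections to a closed point of $X$ shows $V$ is a finite-dimensional $K$-vector space, and $\phi$ induces on it a $K$-linear automorphism $\Phi$. I would then view $V$ as a trivial convergent isocrystal $V \otimes \mathbbm{1}_{\Isoc(X/K)}$ equipped with the Frobenius structure $\Phi$, so that the evaluation morphism $V \otimes \mathbbm{1}_{\Isoc(X/K)} \to \alpha\widetilde{\sL}$ becomes a morphism of convergent $F$-isocrystals. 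Both source and target are restrictions of overconvergent $F$-isocrystals --- namely $(V \otimes \mathbbm{1}_{\oi(X/K)}, \Phi)$ and $(\widetilde{\sL}, \phi)$ --- so Kedlaya's full faithfulness theorem from \cite{Ked04}, which applies because $X$ is smooth, lifts the evaluation morphism uniquely to a morphism $V \otimes \mathbbm{1}_{\oi(X/K)} \to \widetilde{\sL}$ of overconvergent $F$-isocrystals. Forgetting Frobenius and taking $H^0_{\oi(X/K)}$ then yields a $K$-linear section of the injection $H^0_{\oi(X/K)}(\widetilde{\sL}) \hookrightarrow V$, forcing the two to coincide.

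The main obstacle is this last step: one has to manufacture convergent and overconvergent $F$-isocrystal structures on the trivial isocrystal $V \otimes \mathbbm{1}$ using $\Phi$ so that the evaluation map intertwines Frobenius, and to recognise that both objects involved come from overconvergent $F$-isocrystals, so Kedlaya's theorem can transport the trivialisation back to the overconvergent setting. The argument also implicitly requires checking that $V$ is finite-dimensional, which I would verify by restricting horizontal sections to a closed point of $X$ and using that fibres of isocrystals are finite-dimensional.
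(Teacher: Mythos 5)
Your proposal is correct and follows essentially the same route as the paper: reduce fullness to surjectivity of $\alpha_*$ on $H^0$, use Theorem~\ref{F-able-oc:t} together with Lemma~\ref{H0:l} to pass to an $F$-invariant overconvergent isocrystal, equip the maximal trivial subobject $V\otimes\mathbbm{1}$ with the induced Frobenius structure, and invoke Kedlaya's full faithfulness theorem for $F$-isocrystals to lift the inclusion back to the overconvergent category.
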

\begin{proof}It is harmless to assume that $X$ is connected, so that both $\oFable(X/K)$ and $\Isoc(X/K)$ are Tannakian categories. In this case, it is enough to prove that for every $\sM^\dag \in \oFable(X/K)$, the morphism 
$$
\alpha_*:H^0_{\oi(X/K)}(\sM^\dag)\to H^0_{\Isoc(X/K)}(\alpha(\sM^\dag))$$ induced by $\alpha$ is an isomorphism. Note that $\alpha_*$ is injective because $\alpha$ is exact. It remains to prove the surjectivity.

\spa By Theorem \ref{F-able-oc:t}, the $F$-able overconvergent isocrystal $\sM^\dag$ embeds into an $F$-invariant overconvergent isocrystal $\sN^\dag$ and, by Lemma \ref{H0:l}, it is enough to show that $$
\alpha_*:H^0_{\oi(X/K)}(\sN^\dag)\to H^0_{\Isoc(X/K)}(\alpha(\sN^\dag))$$
is surjective. Note that the maximal trivial subobject $\sT\subseteq \alpha(\sN^\dag)$ is equal to $H^0_{\Isoc(X/K)}(\alpha(\sN^\dag))\otimes\sO_{X/K}$. Write $\sT^\dag$ for $H^0_{\Isoc(X/K)}(\alpha(\sN^\dag))\otimes\sO_{X/K}^\dag$, so that $\alpha(\sT^\dag)=\sT$. In order to prove that $\alpha_*$ is surjective we have to prove that $\sT^\dag$ is a subobject of $\sN^\dag$. Let $\Phi$ be a Frobenius structure for $\alpha(\sN^\dag)$. By the maximality of $\sT$, the Frobenius structure $\Phi$ preserves $\sT$ so that it induces a Frobenius structure on $\sT$ as well. This induces, in turn, a Frobenius structure to $\sT^\dag$. Thanks to \cite{Ked04}, we have the following identity $$\alpha_*(H^0_{\oi(X/K)}((\sT^\dag)^\vee \otimes \sN^\dag)^{F=\id})=H^0_{\Isoc(X/K)}(\sT^\vee \otimes \alpha(\sN^\dag))^{F=\id}.$$ In particular, the natural Frobenius-equivariant inclusion of convergent isocrystals $\sT \subseteq \alpha(\sN^\dag)$ is induced by a Frobenius-equivariant inclusion of overconvergent isocrystals $\sT^\dag\subseteq \sN^\dag,$ as we wanted. 
\end{proof}

We want to explain now an elementary example where Theorem \ref{F-able-oc:t} fails if we replace overconvergent isocrystals with convergent isocrystals.

\begin{prop}\label{coun-ext:p}
	There exists an extension $$0\to \sO_{X/\Qp}\to \sM \to  \sO_{X/\Qp}\to 0$$ of convergent isocrystals over $\mathbb{A}^1_{\F_p}$, such that $\sM$ is not a subquotient of any $F$-invariant convergent isocrystal.
\end{prop}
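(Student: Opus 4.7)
The plan is to exhibit an explicit non-trivial class in $\Ext^1_{\Isoc(\A^1_{\F_p}/\Qp)}(\sO_{X/\Qp},\sO_{X/\Qp})$ and show that the corresponding rank-two extension $\sM$ cannot be a subquotient of any $F$-invariant convergent isocrystal. The decisive asymmetry to exploit is that $\Ext^1_{\oi(\A^1/\Qp)}(\sO_{X/\Qp}^\dag,\sO_{X/\Qp}^\dag)\cong H^1_{\rm rig}(\A^1_{\F_p}/\Qp)=0$, while the analogous group on the convergent side is non-zero. Concretely, under the identification of $\Isoc(\A^1/\Qp)$ with modules with integrable connection over the Tate algebra $A=\Qp\langle T\rangle$, so that $\Ext^1_{\Isoc}(\sO_{X/\Qp},\sO_{X/\Qp})=A\cdot dT/dA$, I would take $\omega=\sum_{k\ge 1}p^{k}T^{p^{k}-1}\,dT$: its coefficients have $p$-adic absolute value $p^{-k}\to 0$, so $\omega\in A\cdot dT$, but its formal antiderivative $\sum_{k\ge 1}T^{p^{k}}$ has coefficients equal to $1$ and is therefore not in $A$. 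Thus $[\omega]\neq 0$; call the associated extension $\sM$.

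Assume for contradiction that $\sM$ is a subquotient of an $F$-invariant convergent isocrystal $\widetilde\sM$. By Theorem~\ref{F-inv-iso:t}, $\sM$ embeds into an $F$-invariant convergent isocrystal $\sN$. Picking a Frobenius structure $\Phi\colon F^{*}\sN\iso\sN$ and invoking the essential surjectivity of the forgetful functor from overconvergent $F$-isocrystals to convergent $F$-isocrystals on smooth varieties (Kedlaya, building on Tsuzuki and Étesse--Le~Stum), I would produce an $F$-invariant $\sN^\dag\in\oi(\A^1/\Qp)$ with $\alpha(\sN^\dag)\cong\sN$. Since $\sN^\dag$ is $F$-invariant it is in particular $F$-able, and Corollary~\ref{ff-Fable:cor} then makes $\alpha$ fully faithful on $\langle\sN^\dag\rangle^{\otimes}$.

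The next step is to descend the embedding $\sM\hookrightarrow\alpha(\sN^\dag)$ to $\oi$. Full faithfulness applied to the composite $\sO_{X/\Qp}\hookrightarrow\sM\hookrightarrow\alpha(\sN^\dag)$ produces a lift $\sO_{X/\Qp}^\dag\hookrightarrow\sN^\dag$, injective because $\alpha$ is exact and faithful, so its kernel maps to zero under $\alpha$. Passing to cokernels in $\oi$ and using exactness of $\alpha$, the inclusion $\sO_{X/\Qp}=\sM/\sO_{X/\Qp}\hookrightarrow\alpha(\sN^\dag/\sO_{X/\Qp}^\dag)$ lifts similarly to an injection $\sO_{X/\Qp}^\dag\hookrightarrow\sN^\dag/\sO_{X/\Qp}^\dag$. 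Setting $\sE^\dag:=\sN^\dag\times_{\sN^\dag/\sO_{X/\Qp}^\dag}\sO_{X/\Qp}^\dag$, one checks that $\sE^\dag$ is an overconvergent extension of $\sO_{X/\Qp}^\dag$ by $\sO_{X/\Qp}^\dag$ whose image under $\alpha$ is, as a sub-extension of $\alpha(\sN^\dag)$, equal to $\sM$. Consequently $[\omega]$ lies in the image of $\alpha_{*}\colon\Ext^1_{\oi}(\sO_{X/\Qp}^\dag,\sO_{X/\Qp}^\dag)\to\Ext^1_{\Isoc}(\sO_{X/\Qp},\sO_{X/\Qp})$, which is zero, contradicting $[\omega]\neq 0$.

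The hardest point is the invocation of essential surjectivity for $F$-isocrystals on smooth varieties: this is a substantial external theorem that must be cited rather than reproved, while the rest of the argument is a formal descent enabled by Corollary~\ref{ff-Fable:cor}.
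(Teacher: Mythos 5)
There is a genuine gap, and in fact the proposed example does not work. Your class $\omega=\sum_{k\ge 1}p^{k}T^{p^{k}-1}\,dT$ is non-zero in $H^1_{\mathrm{conv}}$, as you check, but it is also \emph{fixed by Frobenius}: with $F^*T=T^p$ one computes $F^*\bigl(T^{p^k-1}dT\bigr)=pT^{p^{k+1}-1}dT$, hence
\begin{equation*}
F^*\omega=\sum_{k\ge 1}p^{k+1}T^{p^{k+1}-1}dT=\omega-pT^{p-1}dT=\omega-d(T^{p}),
\end{equation*}
so $F^*[\omega]=[\omega]$. Consequently $F^*\sM\simeq\sM$ as extensions, i.e.\ your $\sM$ is itself an $F$-invariant convergent isocrystal, hence trivially a subquotient of one. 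This also torpedoes the second half of your argument: the ``essential surjectivity of the forgetful functor from overconvergent to convergent $F$-isocrystals'' is not a theorem of Kedlaya, Tsuzuki, or \'Etesse--Le~Stum --- what they prove is \emph{full faithfulness} --- and it is false; your own $\sM$, endowed with a Frobenius structure lifting $F^*[\omega]=[\omega]$, is a convergent $F$-isocrystal on $\mathbb{A}^1_{\F_p}$ that cannot come from $\oi(\mathbb{A}^1_{\F_p}/\Qp)$, precisely because $H^1_{\mathrm{rig}}(\mathbb{A}^1_{\F_p}/\Qp)=0$. So the route through the overconvergent category cannot be repaired: the statement to be proved is exactly about classes that survive on the convergent side only.

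The paper's proof is structured to avoid both pitfalls. It chooses $\epsilon=\sum_{i\ge 0}p^{i}x^{p^{i^{2}}}\frac{dx}{x}$ with the sparse exponents $p^{i^{2}}$ arranged so that the Frobenius translates $\epsilon_j=(F^*)^{j}\epsilon$ are $\Qp$-linearly \emph{independent} (Lemma~\ref{series:l}), rather than $F^*$-invariant. If $\sM_\epsilon$ were a subquotient of an $F$-invariant $\sN$, then $F^*$ would restrict to an autoequivalence of $\langle\sN\rangle^{\otimes}$ and would preserve $V=\Ext^1_{\langle\sN\rangle^{\otimes}}(\sO,\sO)$, which would then contain the infinite independent family $\{\epsilon_j\}$, contradicting the finite-dimensionality of Ext groups in a Tannakian category $\otimes$-generated by one object (Lemma~\ref{ext-dim:l}). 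No overconvergent input is used at all. If you want to salvage your approach, the lesson is that ``$[\omega]\neq 0$'' is far from sufficient: you must choose a class whose $F^*$-orbit in $H^1_{\mathrm{conv}}$ spans an infinite-dimensional space, and then argue via finiteness of Ext groups as the paper does.
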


\emph{Proof.~}Let $\hat{\mathbb{A}}^1_{\Z_p}$ be the formal affine line over $\Z_p$, endowed with the Frobenius lift $F:\hat{\mathbb{A}}^1_{\Z_p}\to \hat{\mathbb{A}}^1_{\Z_p}$ mapping a coordinate $x$ to $ x^p$. The convergent cohomology groups of $\mathbb{A}^1_{\F_p}$ are equal to the (continuous) de Rham cohomology groups of $\hat{\mathbb{A}}^1_{\Z_p}$. In particular, $$H^1_{\rm conv}(\mathbb{A}^1_{\F_p},  \sO_{X/\Qp})=H^1_{\rm dR}(\hat{\mathbb{A}}^1_{\Z_p}/\Z_p)\otimes \Q_p.$$ Let $\epsilon\in H^1_{\rm dR}(\hat{\mathbb{A}}^1_{\Z_p}/\Z_p)\otimes \Q_p$ be the class represented by the differential form $\sum_{i=0}^{\infty}p^ix^{p^{i^2}}\frac{dx}{x}$. 

\begin{lem}\label{series:l}
The classes $\epsilon_j:=(F^*)^j\epsilon$ are all $\Q_p$-linearly independent when $j$ varies in $\N$.
\end{lem}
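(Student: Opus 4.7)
The plan is to interpret non-exactness as failure of a formal antiderivative to converge in the bounded analytic functions. First, a direct computation using $F^*(x^n\,\frac{dx}{x}) = p\, x^{pn}\,\frac{dx}{x}$ shows that
\[
(F^*)^j \epsilon \;=\; \sum_{i=0}^\infty p^{\,i+j}\, x^{p^{i^2+j}}\, \frac{dx}{x}.
\]
Hence any finite linear combination $\sum_{j=0}^N c_j\,\epsilon_j$ is supported on the monomials $x^{p^m-1}\,dx$, and for $m > N^2$ the equation $i^2 + j = m$ with $0\leq j\leq N$ and $i\geq 0$ admits at most one solution $(i_m, j_m) := (\lfloor\sqrt m\rfloor,\, m - i_m^2)$, namely when this $j_m$ lies in $[0,N]$.

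Next I would exploit the concrete description of $H^1_{\rm dR}(\hat{\mathbb A}^1_{\Z_p}/\Z_p)\otimes\Q_p$ as the cohomology of $A \xrightarrow{d} A\,dx$, where $A := \Z_p\langle x\rangle\otimes_{\Z_p}\Q_p$ consists of power series $\sum a_n x^n$ with $v_p(a_n)$ bounded below and $v_p(a_n)\to +\infty$. For any form $\omega = \sum b_n x^n\,dx$, the equation $\omega = df$ forces $a_{n+1} = b_n/(n+1)$, so the formal antiderivative is determined up to a constant and $\omega$ is exact if and only if this antiderivative lies in $A$.

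Applying this to $\omega = \sum_{j=0}^N c_j\,\epsilon_j$, the formal antiderivative has $a_n = 0$ for $n\geq 1$ not a power of $p$, while for $m > N^2$ the uniqueness of $(i_m, j_m)$ gives $a_{p^m} = c_{j_m}\, p^{\,i_m - i_m^2}$. If some $c_{j^\ast}\neq 0$ with $0\leq j^\ast\leq N$, then specialising to $m = i^2 + j^\ast$ with $i\to\infty$ yields $v_p(a_{p^m}) = v_p(c_{j^\ast}) + i - i^2 \to -\infty$, contradicting $\sum a_n x^n\in A$. The only subtle point is to verify the uniqueness of $(i_m, j_m)$ for $m$ large: without it, cancellation among the $\epsilon_j$ could conceivably rescue convergence, but once uniqueness is in place the quadratic growth of $i_m^2$ against the linear numerator valuation $i_m+j_m$ automatically forces the antiderivative outside $A$. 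The weights $p^{i^2}$ built into the original series are precisely what engineer this divergence.
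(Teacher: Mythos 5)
Your proof is correct and follows essentially the same route as the paper's: pass to the formal antiderivative of $\sum_j c_j\epsilon_j$ in $\Q_p[[x]]$ and show that if some $c_{j^\ast}\neq 0$ the coefficients $a_{p^m}$ have valuation $v_p(c_{j^\ast})+i-i^2\to-\infty$, contradicting membership in the Tate algebra. The only difference is that you make explicit the uniqueness of the solution $(i,j)$ to $i^2+j=m$ for $m>N^2$, a point the paper's ``unravelling'' step leaves implicit; this is a welcome clarification rather than a new idea.
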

\begin{proof}
	Suppose that there exist $c_0,\dots,c_n\in \Q_p$ such that $\sum_{j=0}^n c_j\epsilon_j=0$. This implies that $$\sum_{j=0}^n c_j\sum_{i=0}^{\infty}p^{i+j}x^{p^{i^2+j}}\tfrac{dx}{x}=\sum_{i=0}^{\infty} \sum_{j=0}^n c_jp^{i+j}x^{p^{i^2+j}}\tfrac{dx}{x}=df$$
	where $f= \sum_{\ell=0}^\infty a_\ell x^\ell$ is a series in $\Q_p[[t]]$ with $\lim_{\ell\to +\infty}v_p(a_\ell)=+\infty$. Unravelling this condition, we deduce that $$\lim_{i\to +\infty}(v_p(c_j)+i-i^2)=\lim_{i\to +\infty}v_p(\tfrac{c_jp^{i+j}}{p^{i^2+j}})=\lim_{\ell\to +\infty}v_p(a_\ell)=+\infty$$ for every $0\leq j \leq n$.
	This shows that each $c_j$ is equal to $0$, as we wanted.
\end{proof} 

Let $$0\to \sO_{X/\Qp}\to \sM_{\epsilon} \to  \sO_{X/\Qp}\to 0$$ be the extension of convergent isocrystals over $\mathbb{A}^1_{\F_p}$ induced by $\epsilon$. Suppose, by contradiction, that $\sM_{\epsilon}$ is a subquotient of an $F$-invariant convergent isocrystal $\sN$. Write $\mf{C}$ for the category $\langle\sN \rangle^{\otimes}$. Since $\sN$ is $F$-invariant, $F^*$ restricts to an autoequivalence on $\mf{C}$. This implies that $F^*$ preserves the subspace $$V:=\Ext^1_{\mf{C}}(\sO_{\mathbb{A}^1_{\F_p}/\Q_p},\sO_{\mathbb{A}^1_{\F_p}/\Q_p})\subseteq H^1_{\rm conv}(\mathbb{A}^1_{\F_p}, \sO_{X/\Qp}).$$ The classes $\epsilon_0,\epsilon_1,...$ form an infinite sequence of linearly independent vectors of $V$. This contradicts Lemma \ref{ext-dim:l}.
\qed

\spa There is a variant of Theorem \ref{F-able-oc:t} which works for general convergent isocrystals.
	
\begin{thm}\label{F-inv-iso:t}  If $X$ is a variety over $k$ and  $\sM$ is a subquotient of an $F$-invariant convergent isocrystal $\widetilde{\sM}$, then $\sM$ is  a subobject of an $F$-invariant convergent isocrystal in $\langle\widetilde{\sM}\rangle^\otimes$.
\end{thm}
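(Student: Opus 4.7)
The strategy is to mimic the proof of Theorem~\ref{F-able-oc:t}, but working inside the Tannakian subcategory $\mf{C} := \langle\widetilde{\sM}\rangle^\otimes$ of $\Isoc(X/K)$. Since $\widetilde{\sM}$ is $F$-invariant and $F^*$ is an exact $\otimes$-autoequivalence of $\Isoc(X/K)$, the functor $F^*$ restricts to an autoequivalence of $\mf{C}$. The key observation is that $\mf{C}$ is $\otimes$-generated by a single object and $K$ has characteristic zero, so by Lemma~\ref{ext-dim:l} every universal extension of objects of $\mf{C}$ again lies in $\mf{C}$. This replaces Kedlaya's finiteness theorem, which was the corresponding input in the overconvergent setting.

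I proceed by induction on the length of a Jordan--H\"older filtration of $\sM$ inside $\mf{C}$. For the base case, assume $\sM$ is irreducible; then $\sM$ is an irreducible subquotient of $\widetilde{\sM}$, and since $F^*$ permutes the finitely many isomorphism classes of irreducible subquotients of $\widetilde{\sM}$, there exists $n>0$ with $(F^*)^n\sM \simeq \sM$. As in Lemma~\ref{irr-isoc:l}, $\sM$ is then a direct summand of the $F$-invariant object $\bigoplus_{i=0}^{n-1}(F^*)^i\sM$, which lies in $\mf{C}$ because $\mf{C}$ is stable under $F^*$.

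For the inductive step, I write $\sM$ as an extension $0\to \sM_2\to \sM\to \sM_1\to 0$ with $\sM_1$ irreducible. Both $\sM_1$ and $\sM_2$ are subquotients of $\widetilde{\sM}$, so by the base case and the inductive hypothesis one finds $F$-invariant objects $\sN_1,\sN_2\in\mf{C}$ with $\sM_1\hookrightarrow \sN_1$ and $\sM_2\hookrightarrow \sN_2$. Proposition~\ref{univ-ext:p} then shows that $\sM$ is a subobject of $\sE\oplus \sN_1$, where $\sE:=\sE_{\mf{C}}(\sN_1,\sN_2)$ is any universal extension of $\sN_1$ by $\sN_2$; by the observation above, $\sE$ itself already lies in $\mf{C}$. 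To finish, note that $F^*\sE$ is a universal extension of $F^*\sN_1\simeq \sN_1$ by $F^*\sN_2\simeq \sN_2$, so the uniqueness statement in Proposition~\ref{prop:univ} yields $F^*\sE\simeq \sE$. Hence $\sE\oplus \sN_1$ is an $F$-invariant object of $\mf{C}$ containing $\sM$, completing the induction.

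The only delicate point is guaranteeing that the universal extension $\sE$ remains inside $\mf{C}$: unlike in the overconvergent setting, where Kedlaya's finiteness theorem provides a uniform bound on $\Ext^1$, here finite-dimensionality of $\Ext^1_{\mf{C}}$ is extracted from the Tannakian structure via Lemma~\ref{ext-dim:l}. This is precisely why the statement is formulated inside $\langle\widetilde{\sM}\rangle^\otimes$ rather than in all of $\Isoc(X/K)$, since Proposition~\ref{coun-ext:p} shows that no such global embedding can exist.
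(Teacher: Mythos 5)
Your proof is correct and follows essentially the same route as the paper's: induction on the Jordan--H\"older length, the base case via the orbit $\bigoplus_{i=0}^{n-1}(F^*)^i\sM$, Proposition~\ref{univ-ext:p} for the inductive step, Lemma~\ref{ext-dim:l} to keep the universal extension inside $\mf C=\langle\widetilde{\sM}\rangle^\otimes$, and $F$-invariance of the universal extension deduced from its uniqueness. One small wording point: Proposition~\ref{univ-ext:p} requires the irreducible quotient $\sM_1$ to be a \emph{direct summand} of $\sN_1$, not merely a subobject as you state in the inductive step; since the base case produces exactly such a direct summand, the argument is unaffected.
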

\begin{proof}Write $\mf C$ for the category $\langle\widetilde{\sM}\rangle^\otimes$. Since $\widetilde{\sM}$ is $F$-invariant, $F^*:\mf C\to \mf C$ is a well-defined autoequivalence. Even in this case, to prove the theorem we make an induction on the length of a Jordan--H\"older filtration of $\sM$. Suppose first that $\sM$ is irreducible. Since $F^*$ permutes the finite set of isomorphims classes of the irreducible subquotients of $\widetilde{\sM}$, there exists $n>0$ such that $(F^*)^n\sM \simeq \sM$. As in Lemma \ref{irr-isoc:l}, we can then embed $\sM$ in the semi-simple $F$-invariant convergent isocrystal $\bigoplus_{i=0}^{n-1} (F^*)^i\sM\in \langle \widetilde{\sM} \rangle^{\otimes}.$ 
	
\spa If $\sM$ is instead an extension of an irreducible convergent isocrystal $\sM_1$ by a convergent isocrystal $\sM_2$, thanks to the previous step, $\sM_1$ is a direct summand of a semi-simple $F$-invariant isocrystal $\sN_1\in\mf C$ and, by the inductive hypothesis, $\sM_2$ is a subobject of some $F$-invariant isocrystal $\sN_2\in\mf C$.  By Proposition \ref{univ-ext:p}, the isocrystal $\sM$ is then a subobject of $\sE_{\mf C}(\sN_2,\sN_1)\oplus \sN_2$, where $\sE_{\mf C}(\sN_2,\sN_1)$ is a universal extenion of $\sN_2$ by $\sN_1$. Thanks to Lemma \ref{ext-dim:l}, the ind-object $\sE_{\mf C}(\sN_2,\sN_1)$ is actually in $\mf C$. As the Frobenius pull-back $F^*$ induces an autoequivalence of $\mf C$, the isocrystal $\sE_{\mf C}(\sN_2,\sN_1)$ is $F$-invariant.
\end{proof}

\subsection{} \label{ss:isoc}
Thanks to the previous results, we are able to recover \cite[Proposition 2.2.4]{AD18}. Fix a characteristic $0$ complete discretely valued field $K$ with residue field $\F_{p^s}$. Let $X$ be a geometrically connected variety over $\F_{p^s}$ and let $x$ be an $\F_{p^s}$-point of $X$. Write $\mf{F^s\textrm{-}}\Isoc(X/K)$ (resp. $\mf{F^s\textrm{-}}\oi(X/K)$) for the category of convergent (resp. overconvergent) isocrystals endowed with an $s$-th Frobenius structure. We have forgetful functors
$\Psi: \mf{F^s\textrm{-}}\Isoc(X/K) \to \Isoc(X/K)$ and $\Psi: \mf{F^s\textrm{-}}\Isoc^{\dag}(X/K) \to \Isoc^{\dag}(X/K)$. Write $\Isoc(X/K)^{\mathrm{geo}}$ (resp. $\oi(X/K)^{\mathrm{geo}}$) for the subcategory of $\Isoc(X/K)$ (resp. $\oi(X/K)$) $\otimes$-generated by the essential image of $\Psi$. All the aforementioned categories are Tannakian categories neutralised by the point $x$. We denote by $\pi_1(-,x)$ the respective Tannaka groups.
\begin{cor}	\label{conv-oconv-mon:c}
			 There exists a functorial commutative diagram 
			\begin{center}
				\begin{tikzcd}
					1\arrow{r} & \pi_1(\Isoc(X/K)^{\mathrm{geo}},x)\arrow[r,"\Psi^*"]\arrow[d] & \pi_1(\mf{F^s\textrm{-}}\Isoc(X/K),x)\arrow[r, "p_{X*}"]\arrow[d] & \pi_1(\mf{F^s\textrm{-}}\Isoc(\F_{p^s}))\arrow[r]\arrow[d,"="] &1\\
					1\arrow{r} & \pi_1(\oi(X/K)^{\mathrm{geo}},x)\arrow[r,"\Psi^*"] & \pi_1(\mf{F^s\textrm{-}}\oi(X/K),x)\arrow[r,"p_{X*}"] & \pi_1(\mf{F^s\textrm{-}}\Isoc(\F_{p^s}))\arrow{r} & 1
				\end{tikzcd}
			\end{center}
		with exact rows.
	\end{cor}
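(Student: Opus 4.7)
The plan is to treat the two rows separately using Proposition~\ref{a-prop:exseq}, and to note that the vertical arrows and their commutativity come from the forgetful functor $\oi(X/K)\to \Isoc(X/K)$ together with its Frobenius-equipped analogue, while the identification on the rightmost column is automatic because on a point the convergent and overconvergent categories coincide. For each row I would verify the three conditions separately: faithful flatness of $p_{X*}$ on the right, closed immersion of $\Psi^*$ on the left, and exactness in the middle.

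The faithful flatness of $p_{X*}$ is formal in both rows. Pullback along $x:\Spec \F_{p^s} \to X$ gives a section of $p_X^*$ because $x^*\circ p_X^*=\id$ on $\mf{F^s\textrm{-}}\Isoc(\F_{p^s})$; dually $p_{X*}$ is split surjective. The closed-immersion property of $\Psi^*$ is equally formal: by the very definition of $\Isoc(X/K)^{\mathrm{geo}}$ (resp.\ $\oi(X/K)^{\mathrm{geo}}$) as the $\otimes$-subcategory generated by the essential image of $\Psi$, every object is a subquotient of some $\Psi(\widetilde{\sM})$, and by \cite[Prop.~2.21]{DM82} this is exactly the Tannakian condition for $\Psi^*$ to be a closed immersion.

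The substance of the proof is exactness in the middle, which by Proposition~\ref{a-prop:exseq} reduces to the observability of $\Psi$, i.e.\ to the assertion that every object of $\Isoc(X/K)^{\mathrm{geo}}$ (resp.\ $\oi(X/K)^{\mathrm{geo}}$) embeds into an $F^s$-invariant object, as the essential image of $\Psi$ coincides with the subcategory of $F^s$-invariant objects. Such an object is by construction a subquotient of an $F^s$-invariant (over)convergent isocrystal, so Theorem~\ref{F-inv-iso:t} handles the convergent case directly. For the overconvergent case I would invoke Theorem~\ref{F-able-oc:t}, noting that its proof is insensitive to replacing $F$ by $F^s$: the notion of $F$-ability is unchanged, and the $F$-invariant envelope constructed in Lemma~\ref{irr-isoc:l} is a fortiori $F^s$-invariant. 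Equipping the resulting envelope with any isomorphism $(F^s)^*\sM \iso \sM$ lifts it to an object in the essential image of $\Psi$, completing the verification of observability.

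The main obstacle is precisely the observability of $\Psi$: once Theorems~\ref{F-able-oc:t} and~\ref{F-inv-iso:t} are available, everything else is formal Tannakian machinery as developed in Appendix~\ref{sec:tannaka}. These observability results are exactly what the universal-extension technique was built to supply, and they are what pushes the argument beyond the semisimple range; the remaining pieces — faithful flatness on the right, closed immersion on the left, commutativity of the diagram, and the identification of the right column — require no further input.
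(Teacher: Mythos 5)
Your overall route is the paper's: construct the vertical arrows from $\alpha:\oi(X/K)\to\Isoc(X/K)$, get faithful flatness of $p_{X*}$ from the section induced by the $\F_{p^s}$-point $x$, and reduce exactness in the middle to Proposition~\ref{a-prop:exseq} via the observability of $\Psi$, supplied by Theorem~\ref{F-able-oc:t} (overconvergent row) and Theorem~\ref{F-inv-iso:t} (convergent row). You are in fact slightly more explicit than the paper about the harmless replacement of $F$ by $F^s$ and about why $\Psi^*$ is a closed immersion.

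There is, however, one genuine gap: you assert that exactness in the middle ``reduces to the observability of $\Psi$'' and that the remaining pieces ``require no further input.'' That misstates Proposition~\ref{a-prop:exseq}, whose condition 2) has \emph{two} clauses: observability of $f^*$, \emph{and} the requirement that for every $M\in\mf G$ the maximal trivial subobject of $f^*M$ come from $g^*(U)\subseteq M$ for some $U\in\mf H$. In the present setting this second clause says that for every object $(\sM,\Phi)$ of $\mf{F^s\textrm{-}}\Isoc(X/K)$ (resp.\ of $\mf{F^s\textrm{-}}\oi(X/K)$), the maximal trivial subobject of $\Psi(\sM,\Phi)$ is of the form $p_X^*(U)$ for some $U\in\mf{F^s\textrm{-}}\Isoc(\F_{p^s})$; it is the analogue of condition (iii)(b) of \cite[Thm.~A.1]{EHS07} and is not a formal consequence of observability. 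The paper verifies it in one sentence — the maximal trivial subobject is preserved by $\Phi$ by maximality, hence inherits a compatible Frobenius structure and so descends to $\mf{F^s\textrm{-}}\Isoc(\F_{p^s})$ — but your write-up omits this check entirely, so you should add it.
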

\begin{proof}
	The vertical arrows of the diagram are constructed using the functor $\alpha:\oi(X)\to \Isoc(X)$. To prove that the right horizontal morphisms $p_{X*}$ are faithfully flat, we note that they admit a section induced by the $\F_{p^s}$-point $x$. For the exactness in the middle we want to apply instead Proposition \ref{a-prop:exseq}. Thanks to Theorem \ref{F-able-oc:t} and Theorem \ref{F-inv-iso:t}, the functor $\Psi$ is observable both for convergent and overconvergent isocrystals. To conclude, it is enough to note that if $(\sM,\Phi)$ is an $F$-isocrystal (both convergent or overconvergent) the maximal trivial subobject of $\sM$ is preserved by $\Phi$, thus it admits a Frobenius structure compatible with $\Phi$.
\end{proof}

\appendix
\section{Observable functors} \label{sec:tannaka}
\subsection{}In \cite{BBHM63}, the authors introduce the notion of observable subgroup, which turns out to be useful to study Tannaka groups. Indeed, it appeared implicitly and in various forms in some works on Tannakian categories (e.g. \cite{EHS07}, \cite{dS15}, and \cite{AE19}). Recently in \cite{And20}, André studies the interaction of this notion with other standard results on Tannaka groups. We propose in the sequel a self-contained variant of André's approach. We introduce the more general notion of \textit{\textit{observable functor}} and use it to provide new criteria for the surjectivity and exactness of morphisms between Tannaka groups, and to give new proofs of previous results. 

\begin{defn}
Let $k$ be a field. We say that a functor $\alpha: \mf C\to \mf D$ of $k$-linear Tannakian categories is \textit{observable} if for every rank $1$ object $L\in \mf D$ which embeds into an object of $\alpha(\mf C)$, there exists $n>0$ such that $(L^{\otimes n})^\vee$ embeds into an object of $\alpha(\mf C)$.
\end{defn}

\begin{prop}[Bialynicki-Birula--Hochschild--Mostow]\label{a-observable:p}
A functor $\alpha: \mf C\to \mf D$ of Tannakian categories is observable if and only if every quotient in $\mf D$ of an object in $\alpha(\mf C)$ embeds into an object of $\alpha(\mf C)$.
\end{prop}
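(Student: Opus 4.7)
The plan is to handle the two directions separately; the easy direction is a one-line dualisation and essentially all content sits in the converse.

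For the implication \emph{quotient condition $\Rightarrow$ observable}, given a rank $1$ object $L$ with $L\hookrightarrow \alpha(M)$, the dual map $\alpha(M^\vee)=\alpha(M)^\vee\twoheadrightarrow L^\vee$ realises $L^\vee$ as a quotient of an object of $\alpha(\mf C)$. The quotient hypothesis embeds $L^\vee$ in some $\alpha(N)$, which is precisely the observable condition with $n=1$.

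For the converse, I would fix $p\colon \alpha(M)\twoheadrightarrow Q$ with $r=\rank Q$ and build an embedding $Q\hookrightarrow \alpha(N)$ in three steps. First, dualising $p$ gives $Q^\vee\hookrightarrow \alpha(M^\vee)$; applying $\wedge^r$ (which preserves injections and commutes with $\alpha$, cf.\ Section~\ref{tan-fun:ss}) yields
$$(\det Q)^\vee \;=\; \wedge^r Q^\vee\;\hookrightarrow\;\wedge^r\alpha(M^\vee)\;=\;\alpha(\wedge^r M^\vee),$$
so the rank $1$ object $(\det Q)^\vee$ embeds into an object of $\alpha(\mf C)$, and observability supplies $n>0$ and $N_1\in\mf C$ with $(\det Q)^{\otimes n}\hookrightarrow \alpha(N_1)$. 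Second, tensoring this with the injection $((\det Q)^\vee)^{\otimes(n-1)}\hookrightarrow\alpha((\wedge^r M^\vee)^{\otimes(n-1)})$ and invoking the canonical isomorphism $L^{\otimes n}\otimes(L^\vee)^{\otimes(n-1)}\iso L$ for the line $L=\det Q$, I would upgrade to an embedding $\det Q\hookrightarrow \alpha(N_0)$ for a suitable $N_0\in\mf C$.

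Third, I would invoke the natural identification $Q\cong \wedge^{r-1} Q^\vee\otimes \det Q$, which comes from the perfect pairing $Q\otimes \wedge^{r-1}Q\to \det Q$ on a rank-$r$ object. Applying $\wedge^{r-1}$ to $Q^\vee\hookrightarrow \alpha(M^\vee)$ and tensoring with the embedding of $\det Q$ gives
$$Q\;\cong\; \wedge^{r-1}Q^\vee\otimes \det Q\;\hookrightarrow\; \alpha(\wedge^{r-1}M^\vee)\otimes\alpha(N_0)\;=\;\alpha(\wedge^{r-1}M^\vee\otimes N_0),$$
which is the quotient condition. The main subtlety to anticipate is the second step: observability only constrains \emph{some} tensor power of $(\det Q)^\vee$ rather than $(\det Q)^\vee$ itself, so one genuinely needs to combine the conclusion $(\det Q)^{\otimes n}\hookrightarrow \alpha(N_1)$ with the embedding of $(\det Q)^\vee$ already produced in step one in order to recover $\det Q$. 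Secondary checks, namely that the antisymmetriser-image $\wedge^r$ used by the paper preserves subobjects and that the identity $V\cong \wedge^{r-1}V^\vee\otimes\det V$ holds for any rank-$r$ object in a Tannakian category, are formal via Tannaka duality.
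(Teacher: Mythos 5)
Your proof is correct and follows essentially the same strategy as the paper's: reduce to a rank-$1$ object via a top exterior power, apply observability to ``invert'' it, and recover $Q$ by a multilinear-algebra untwisting. The only (cosmetic) difference is that the paper takes $L=\wedge^{r}W$ for the kernel $W$ of $\alpha(M)\twoheadrightarrow Q$ and uses the embedding $Q\otimes L\hookrightarrow \wedge^{r+1}\alpha(M)$, whereas you take $\det Q$ and use $Q\cong \wedge^{r-1}Q^{\vee}\otimes \det Q$; both routes are valid.
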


\begin{proof}A stronger version of this theorem is proven in \cite[Thm. 9]{BBHM63}. We give here a shorter proof of  this weaker version. It is clear that the second condition implies the first one, thus it is enough to prove the converse. Suppose given an exact sequence $$0\to W\to V \to Q \to 0 $$in $\mf D$ with $V\in\alpha(\mf C)$. Write $r$ for the rank of $W$ and $L$ for $\wedge^r(W)$. Since $L$ embeds into $\wedge^r(V)\in\alpha(\mf C)$, there exists $n>0$ and $N\in \alpha(\mf C)$ such that $(L^{\otimes n})^\vee$ is a subobject of $N$. At the same time, the natural morphism $V\otimes L\to \wedge^{r+1}(V)$ factors through $Q\otimes L$. Combining these two observations, we deduce that there exists an embedding $$Q=(Q\otimes L)\otimes L^{\otimes (n-1)}\otimes (L^{\otimes n})^\vee\hookrightarrow  \wedge^{r+1}(V)\otimes (\wedge^{r}(V))^{\otimes(n-1)}\otimes N.$$
This shows that $Q$ embeds into an object of $\alpha(\mf C)$, as we wanted.
\end{proof}

\begin{lem}\label{a-suf-obs:l}
The functor $\alpha$ is observable if one of the following conditions is satisfied.
\begin{itemize}
	\item[1)]If a rank $1$ object $L\in \mf D$ embeds into an object of $\alpha(\mf C)$, then it is the direct summand of a semi-simple object of $\alpha(\mf C)$.
	\item[2)] The functor $\alpha$ sends semi-simple objects to semi-simple objects.
\end{itemize}
\end{lem}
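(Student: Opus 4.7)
The plan is to show that condition 2) implies condition 1) via a Jordan--Hölder argument, and that condition 1) gives observability directly with $n=1$. The reduction of 2) to 1) is the conceptual core of the argument.

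First, assuming 1), I would take a rank $1$ object $L \in \mf D$ embedding into some object of $\alpha(\mf C)$. By hypothesis, $L$ is a direct summand of a semi-simple $S \in \alpha(\mf C)$. Writing $S = \alpha(S')$, the dual $S^\vee = \alpha((S')^\vee)$ still lies in $\alpha(\mf C)$ because $\alpha$ is a tensor functor, and $L^\vee$ is a direct summand of $S^\vee$. Hence $(L^{\otimes 1})^\vee$ embeds into an object of $\alpha(\mf C)$, and observability holds with $n=1$.

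Next, to derive 1) from 2), I would start with an embedding $L \hookrightarrow \alpha(V')$ for some $V' \in \mf C$ and pick a Jordan--Hölder filtration $0 = V'_0 \subset V'_1 \subset \cdots \subset V'_m = V'$ in $\mf C$; this exists since objects in a Tannakian category are both noetherian and artinian. Let $i$ be minimal with $L \subset \alpha(V'_i)$. The composition $L \hookrightarrow \alpha(V'_i) \twoheadrightarrow \alpha(V'_i/V'_{i-1})$ is nonzero by minimality, and because $L$ is simple (it has rank $1$), this composition must itself be an embedding. The quotient $V'_i/V'_{i-1}$ is irreducible in $\mf C$, hence semi-simple, so by hypothesis 2), $\alpha(V'_i/V'_{i-1})$ is a semi-simple object of $\alpha(\mf C)$ containing $L$ as a subobject. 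Since semi-simple objects split off every subobject, $L$ is a direct summand of $\alpha(V'_i/V'_{i-1})$, which verifies condition 1).

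I do not expect any real obstacle: every ingredient is standard, namely Jordan--Hölder decompositions in Tannakian categories, the compatibility of $\alpha$ with duals, and the simplicity of rank $1$ objects. The only point requiring mild care is to confirm that $\alpha(\mf C)$ is stable under duals, which is immediate from $\alpha$ being a functor of Tannakian categories in the sense of Section~\ref{tan-fun:ss}.
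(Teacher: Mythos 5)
Your argument is correct and is essentially the paper's own proof: the paper likewise handles 1) by dualising the direct-summand decomposition (with $n=1$), and derives 1) from 2) by choosing a filtration of $M$ with semi-simple graded pieces, using exactness of $\alpha$ to identify $\alpha(F_i(M))/\alpha(F_{i-1}(M))$ with $\alpha(F_i(M)/F_{i-1}(M))$ and the rank-$1$ hypothesis to embed $L$ into one graded piece. Your version merely makes explicit two points the paper leaves implicit, namely the minimality-of-$i$ argument and the fact that a subobject of a semi-simple object is a direct summand.
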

\begin{proof}
Suppose that $\alpha$ satisfies 1). If $L\in \mf D$ is a rank 1 subobject of $V\in \alpha(\mf C)$, then there exists a semi-simple object $W\in \alpha(\mf C)$ such that $L$ is a direct summand of $W$. Therefore, $L^\vee$ is a direct summand of $W^\vee\in \alpha(\mf C)$. This shows that $\alpha$ is observable.

\medskip

We prove now that 2) implies 1). Let $M$ be an object of $\mf C$ and $L$ be a rank $1$ subobject of $\alpha(M)$. Choose an ascending filtration $F_\bullet$ of $M$ such that the associated graded object ${\rm Gr}_{F_\bullet}(M)$ is semi-simple. Since $L$ is of rank $1$, there exists $i$ such that $L$ embeds into $\alpha(F_i(M))/\alpha(F_{i-1}(M))$. Note that $\alpha(F_i(M))/\alpha(F_{i-1}(M))=\alpha(F_i(M)/F_{i-1}(M))$ because $\alpha$ is exact and $\alpha(F_i(M)/F_{i-1}(M))$ is semi-simple by 2). We deduce that $L$ embeds into a semi-simple object, as we wanted. 
\end{proof}

\subsection{} We are now ready to see how the notion of observable functor interacts with various properties of morphisms of Tannaka groups. Let 
\ga{}{ K\xrightarrow{f} G \xrightarrow{g} H \notag}
be morphisms of affine groups schemes over $k$. 
We denote by 
\ga{}{ \mf H  \xrightarrow{g^*} \mf G \xrightarrow{f^*} \mf K \notag}
the induced functors between the representation categories with values in the category of finite-dimensional $k$-vector spaces. If $V$ is a representation in $\mf H$, we write $\mathbb{P}(V)$ for the projectivisation of $V$, endowed with the natural action of $H$.

\begin{prop}[Faithful flatness] \label{a-prop:ff}
	The following conditions are equivalent.
	\begin{itemize}
		\item[1)]
		$g$ is faithfully flat.
		
		\item[2)]  $g^*(\mf H)\subseteq \mf G$ is a full subcategory stable under the operation of taking subobjects.
		
		\item[3)]  $g^*$ is fully faithful and observable.
		
		\item[4)] For every $V\in \mf H$, the inclusion $\mathbb{P}(V)^H(k)\subseteq \mathbb{P}(V)^G(k)$ is an equality.
		
		\end{itemize}
	
\end{prop}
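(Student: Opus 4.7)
The plan is to establish the equivalences via the chain $1) \Leftrightarrow 2) \Leftrightarrow 3)$ together with $1) \Rightarrow 4) \Rightarrow 2)$. The equivalence $1) \Leftrightarrow 2)$ is classical, and I would simply invoke \cite[Prop.~2.21]{DM82}.

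For $2) \Rightarrow 3)$, fully faithfulness of $g^*$ combines the fullness of the inclusion $g^*(\mf H) \subseteq \mf G$ with the automatic faithfulness of any exact $\otimes$-functor between Tannakian categories (see Section~\ref{tan-fun:ss}). Observability is then immediate: if a rank $1$ object $L \in \mf G$ embeds into some $g^*(V)$, then the subobject-stability granted by 2) places $L$, and hence $L^{\vee}$, in $g^*(\mf H)$, so the observability condition holds trivially with $n=1$. For the converse $3) \Rightarrow 2)$, the key input is Proposition~\ref{a-observable:p}, which reformulates observability as: every quotient of an object of $g^*(\mf H)$ embeds into an object of $g^*(\mf H)$. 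Given a $G$-subobject $W \hookrightarrow V = g^*(U)$, I would form the composite $\phi : V \twoheadrightarrow V/W \hookrightarrow V' = g^*(U')$, exhibiting $W$ as the kernel of a morphism between objects of the essential image. Fully faithfulness then produces $\psi : U \to U'$ with $\phi = g^*(\psi)$, and the exactness of $g^*$ yields $W = g^*(\ker \psi) \in g^*(\mf H)$. The fullness of $g^*(\mf H) \subseteq \mf G$ follows directly from the fully faithfulness of $g^*$.

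The implication $1) \Rightarrow 4)$ is immediate: if $g$ is faithfully flat, the $G$-action on any $V \in \mf H$ factors through $H$, so $\mathbb{P}(V)^G = \mathbb{P}(V)^H$ as schemes. The main obstacle is $4) \Rightarrow 2)$, which I would handle via a Plücker argument. Let $W \hookrightarrow V$ be a $G$-subobject of rank $r$ with $V \in g^*(\mf H)$. Then $\wedge^r W \subseteq \wedge^r V$ is a rank $1$ $G$-subobject, and since $\wedge^r V$ still belongs to $g^*(\mf H)$, condition 4) applied to $\mathbb{P}(\wedge^r V)$ forces the line $\wedge^r W$ to be $H$-invariant. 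Writing $\wedge^r W = k \omega$ with $\omega$ decomposable, the $H$-invariance of this line means there is a character $\chi : H \to \mathbb{G}_m$ with $h \cdot \omega = \chi(h)\, \omega$ for all $h \in H$. Since $W$ is canonically recovered from $\omega$ as $\{v \in V : v \wedge \omega = 0\}$, a short calculation using $h(v \wedge \omega) = \chi(h)\, (hv) \wedge \omega$ gives $hW = W$, so $W$ is $H$-invariant and hence in $g^*(\mf H)$. This closes the cycle and finishes the proof.
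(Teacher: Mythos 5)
Your chain of implications is sound except at one point: the step $4)\Rightarrow 2)$ establishes only half of condition 2). Condition 2), as it is used in \cite[Prop.~2.21~(a)]{DM82} and as you yourself use it in the step $2)\Rightarrow 3)$, asserts two things: that $g^*$ is full (so that $\mf H$ is identified with a genuinely full subcategory of $\mf G$) \emph{and} that the essential image is stable under subobjects. Your Pl\"ucker argument establishes only the second. The first cannot be dropped: for $G=\{1\}$ and $H=\mathbb{G}_m$ the essential image of $g^*$ is all of $\mf G=\Vect_k$ and is trivially stable under subobjects, yet $g$ is not faithfully flat. So any derivation of 2) from 4) must also produce, for every $G$-morphism $g^*U\to g^*U'$, an $H$-morphism inducing it; this is exactly the part of the paper's proof (which goes $4)\Rightarrow 3)$ rather than $4)\Rightarrow 2)$) that takes real work, namely the injectivity of $g^*$ on character groups followed by the analysis of maximal trivial subobjects.

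The good news is that your Pl\"ucker argument proves something slightly stronger than subobject-stability --- it shows that the underlying subspace $W\subseteq U$ is itself $H$-stable --- and fullness follows from this by one more application of the same idea: the graph $\Gamma_\phi\subseteq U\oplus U'$ of a $G$-morphism $\phi:g^*U\to g^*U'$ is a $G$-subobject of $g^*(U\oplus U')$, hence $H$-stable by your argument, and $H$-stability of a graph is precisely $H$-equivariance of $\phi$. With that sentence added, your route through $4)\Rightarrow 2)$ is complete and genuinely different from (arguably more economical than) the paper's. The remaining steps ($1)\Leftrightarrow 2)$ by citation, $2)\Leftrightarrow 3)$ via Proposition~\ref{a-observable:p} applied to the kernel of $V\twoheadrightarrow V/W\hookrightarrow g^*(U')$, and $1)\Rightarrow 4)$) match the paper's proof in substance; note only that in $1)\Rightarrow 4)$ the relevant point is not that the $G$-action factors through $H$ (it always does) but that $g$ is faithfully flat, hence an epimorphism, so a $G$-fixed point of $\mathbb{P}(V)$ is automatically $H$-fixed.
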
 
\begin{proof}
	The equivalence $1) \Leftrightarrow 2) $ is \cite[Prop. 2.21 (a)]{DM82} and the implications $1)\Rightarrow 4)$ and $2)\Rightarrow 3)$ are immediate. We prove $3)\Rightarrow 2).$ Let us assume that $g^*$ is fully faithful and observable. For every $V\in \mf G$ which is a subobject of some $M\in g^*(\mf H)$, there exists, by Proposition \ref{a-observable:p}, an $N\in g^*(\mf H)$ which has $V^\vee$ as a subobject. Dualising we deduce that there exists a surjective morphism $N^{\vee}\twoheadrightarrow V$. This implies that there exists a morphism $a:N^{\vee}\to M$ in $\mf G$ such that ${\rm Im}(a)=V$. Since $g^*$ is fully faithful, $a=g^*(\widetilde{a})$ for some morphism $\widetilde{a}\in \Hom_{\mf H}(N^\vee,M)$. We deduce that $V=g^*({\rm Im}(\widetilde{a}))$, as we wanted.
	
	\medskip
	
	Finally, we prove $4)\Rightarrow 3)$. Suppose that $g^*$ satisfies 4). This implies that $g^*$ is observable because every rank 1 subobject in $\mf G$ which embeds into an object of $g^*(\mf H)$ is itself in $g^*(\mf H)$. The next step is to prove that $g^*$ induces an injective morphism on the groups of characters $g^*:X^*(H)\to X^*(G)$. To do this we observe that if $L_1$ and $L_2$ are two rank $1$ objects in $\mf H$, then $L_1\simeq L_2$ if and only if $|\mathbb{P}(V)^H(k)|>2$, where $V:=L_1\oplus L_2$. Of course, the same criterion works also for the rank $1$ objects of $\mf G$. This shows that $L_1\simeq L_2$ if and only if $g^*L_1\simeq g^*L_2$, because, by condition 4), the sets $\mathbb{P}(V)^H(k)$ and $\mathbb{P}(g^*V)^H(k)$ are equal. Using this we can finally prove that $g^*$ is fully faithful. Indeed, thanks to 4), for every $V\in \mf H$, the maximal trivial subobject $T\subseteq g^*(V)$ comes from some $L^{\oplus r}\subseteq V$. By the injectivity of the morphism on the groups of characters, we deduce that $L$ is isomorphic to the unit object. This yields the desired result.
\end{proof}

	\begin{rmk}The equivalence $1)\Leftrightarrow 4)$  appears in \cite[Lem. 4.2 and Lem. 4.3]{dS15}. Also, a variant of $3)$ is considered in \cite[Lem.~1.6]{AE19}. We correct a typo in {\it loc. cit.}: $\lambda/m \in \bar \Q_p $ should read $\sqrt[m]{\lambda} \in \bar \Q_p.$

 \end{rmk}

\begin{prop}[Closed immersion] \label{a-prop:clemb}
The following conditions are equivalent.
\begin{itemize}
\item[1)]
$f$ is a closed immersion.
\item[2)] Every object of $\mf K$ is a subquotient of an object coming from $\mf G$.
\end{itemize}
\end{prop}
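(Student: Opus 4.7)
The plan is to reduce to standard Hopf-algebra facts, as in \cite[Prop.~2.21]{DM82}. Identify $\mf K=\Rep_k K$ and $\mf G=\Rep_k G$ via Tannaka duality, and write $f^{\#}:\sO(G)\to \sO(K)$ for the Hopf algebra morphism associated to $f$.

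For the implication $1)\Rightarrow 2)$, I would exploit the regular representation. Any finite-dimensional $V\in\mf K$ embeds into $\sO(K)^{\oplus\dim V}$ through a choice of basis, and $\sO(K)$ is the filtered union of its finite-dimensional $K$-subrepresentations. Since $f$ is a closed immersion, $f^{\#}$ is surjective, and $\sO(G)$ is likewise a filtered union of finite-dimensional $G$-subrepresentations $W_\alpha$; hence $f^{\#}(W_\alpha)$ exhausts $\sO(K)$. For $\alpha$ large enough the embedding $V\hookrightarrow \sO(K)^{\oplus r}$ factors through $f^{\#}(W_\alpha)^{\oplus r}$, exhibiting $V$ as a subquotient of $f^*(W_\alpha^{\oplus r})$.

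For the implication $2)\Rightarrow 1)$, let $A\subseteq \sO(K)$ denote the image of $f^{\#}$. It is a Hopf subalgebra, so setting $K':=\Spec A$ yields a factorization $f:K\xrightarrow{\pi}K'\hookrightarrow G$ in which $K'\hookrightarrow G$ is a closed immersion, and it suffices to prove that $\pi$ is an isomorphism. By hypothesis, every $V\in\mf K$ is a subquotient of $f^*W$ for some $W\in\mf G$, and $f^*W$ is the pullback along $\pi$ of a $K'$-representation, so $N:=\ker(\pi)$ acts trivially on $V$. Applying this to all finite-dimensional subrepresentations of the regular representation shows that $N$ acts trivially on the entire left regular representation of $K$; by the faithfulness of this action, $N$ must be trivial.

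The main (minor) obstacle is the last step, which combines triviality of $\ker(\pi)$ with the faithful flatness of $\sO(K)$ over $A$ (a standard property of commutative Hopf subalgebras, due to Takeuchi): faithfully flat descent along $\pi$, through the identification $\sO(K)\otimes_A\sO(K)\cong\sO(K\times_{K'}K)=\sO(K\times N)=\sO(K)$, forces $A=\sO(K)$, so $\pi$ is an isomorphism, completing the proof.
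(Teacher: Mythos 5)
Your proof is correct. Note, however, that the paper does not actually prove this statement: it simply cites \cite[Prop.~2.21~(b)]{DM82}, so what you have written is essentially a self-contained reconstruction of the Deligne--Milne argument. Your forward direction (embed $V$ into $\sO(K)^{\oplus r}$ via the regular representation, write $\sO(K)$ as the filtered union of the images $f^{\#}(W_\alpha)$ of finite-dimensional $G$-subrepresentations of $\sO(G)$, and observe that $f^{\#}(W_\alpha)$ is a $K$-equivariant quotient of $f^*W_\alpha$) is exactly the standard one. Your converse is also sound: the image $A$ of $f^{\#}$ is a Hopf subalgebra, $N=\ker(\pi)$ acts trivially on every subquotient of an object of $f^*(\mf G)$ and hence on all of $\sO(K)$, so $N$ is trivial; and you are right that this alone does not formally give $A=\sO(K)$ without an extra input, which you correctly supply via Takeuchi's faithful flatness of $\sO(K)$ over the Hopf subalgebra $A$ together with descent applied to the shearing isomorphism $K\times_{K'}K\cong K\times N$. (Equivalently, one could quote the fact that a monomorphism of affine group schemes over a field is a closed immersion, but that fact is itself proved by the same descent argument, so your route is the honest one.) The only gain of the paper's approach is brevity; yours makes the Hopf-algebraic mechanism explicit.
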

\begin{proof}
This is \cite[Prop.~2.21 (b)]{DM82}.
\end{proof}

\begin{defn}
We say that a closed subgroup $f:K\hookrightarrow G$ such that $f^*$ is an observable functor is an \textit{observable subgroup}.
\end{defn}
\begin{cor}\label{a-sub:c}
The following conditions are equivalent.
\begin{itemize}
	\item[1)] $f:K\hookrightarrow G$ is an observable subgroup.
	\item[2)] Every object of $\mf K$ embeds into an object coming from $\mf G$.
\end{itemize}
\end{cor}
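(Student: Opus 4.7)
The plan is to derive the equivalence directly from the two preceding propositions: Proposition~\ref{a-prop:clemb} characterising closed immersions via subquotients, and Proposition~\ref{a-observable:p} characterising observable functors by the property that every quotient of an object in the essential image embeds back into that image.

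For the implication $2)\Rightarrow 1)$, I would observe that an embedding is in particular a subquotient presentation, so assumption 2) immediately gives, via Proposition~\ref{a-prop:clemb}, that $f$ is a closed immersion. To verify observability of $f^*$, I take any rank $1$ object $L\in \mf K$ that embeds into some object of $f^*(\mf G)$; applying 2) to $L^\vee\in\mf K$ produces an embedding of $L^\vee$ into an object from $f^*(\mf G)$, so the defining condition for observability holds with $n=1$.

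The reverse implication $1)\Rightarrow 2)$ contains the substantive content. Given $V\in \mf K$, Proposition~\ref{a-prop:clemb} provides $M\in f^*(\mf G)$ together with subobjects $M''\subseteq M'\subseteq M$ such that $V\simeq M'/M''$. The idea is to trade the subquotient for a plain subobject by enlarging the denominator: form the quotient $M/M''$, which is a quotient of an object from $f^*(\mf G)$ and contains $V$ as a subobject via $M'/M''\hookrightarrow M/M''$. By Proposition~\ref{a-observable:p} applied to the observable functor $f^*$, every quotient of an object in $f^*(\mf G)$ embeds into some object of $f^*(\mf G)$, which yields $M/M''\hookrightarrow N$ with $N\in f^*(\mf G)$. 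Composing gives the desired embedding $V\hookrightarrow N$.

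I do not anticipate a serious obstacle here; the only delicate point is that observability is formulated only in terms of rank $1$ objects, and it is precisely Proposition~\ref{a-observable:p} that bootstraps this a priori weaker condition to the stronger statement about arbitrary quotients, which is what drives the proof of $1)\Rightarrow 2)$.
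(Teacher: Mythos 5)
Your proof is correct and follows exactly the route the paper intends: the paper's own (very terse) proof cites Proposition~\ref{a-prop:clemb} together with Proposition~\ref{a-observable:p} for $1)\Rightarrow 2)$ and Proposition~\ref{a-prop:clemb} alone for the converse, which is precisely the argument you spell out. Your elaboration of how to upgrade a subquotient to a subobject by passing to $M/M''$ and invoking Proposition~\ref{a-observable:p} is the intended (and only) substantive step.
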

\begin{proof}The implication $1)\Rightarrow 2)$ follows from Proposition \ref{a-prop:clemb} and from Proposition \ref{a-observable:p}  while the converse follows from Proposition \ref{a-prop:clemb}.
\end{proof}

\begin{lem}\label{a-comp-obs:l} Suppose that $f:K\to G$ decomposes as $K\xrightarrow{f_1} K'\xrightarrow{f_2} G$ with $K'$ an affine group scheme and $f_2$ a closed immersion. If $f^*$ is observable, then $f_1^*$ is observable.
\end{lem}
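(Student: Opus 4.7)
The plan is to reduce observability of $f_1^*$ to observability of $f^*$ by exploiting the fact that $f_2$ being a closed immersion forces every object of $\mf K'$ to be a subquotient of one coming from $\mf G$, via Proposition~\ref{a-prop:clemb}.

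Concretely, I would start with an arbitrary rank $1$ object $L\in \mf K$ embedded in some $f_1^*(M')$ with $M'\in \mf K'$, the goal being to produce $n>0$ and $N'\in \mf K'$ with $(L^{\otimes n})^\vee \hookrightarrow f_1^*(N')$. Since $f_2$ is a closed immersion, Proposition~\ref{a-prop:clemb} furnishes an $N\in \mf G$ such that $M'$ is a subquotient of $f_2^*(N)$. Applying the exact functor $f_1^*$ one sees that $f_1^*(M')$ is a subquotient of $f^*(N)=f_1^*f_2^*(N)$, hence $L$ itself is a subquotient of $f^*(N)$ in $\mf K$.

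Next I would invoke the Bialynicki-Birula--Hochschild--Mostow characterisation of observability (Proposition~\ref{a-observable:p}) applied to $f^*$: every quotient in $\mf K$ of an object of $f^*(\mf G)$ embeds into an object of $f^*(\mf G)$. Writing $L\hookrightarrow Q$ with $Q$ a quotient of $f^*(N)$, I pick an embedding $Q\hookrightarrow f^*(P)$ and compose to obtain $L\hookrightarrow f^*(P)=f_1^*(f_2^*(P))$. In particular, $L$ is a rank $1$ subobject of an object lying in $f_1^*(\mf K')$.

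Finally I would apply the defining observability of $f^*$ directly to this $L$: there exist $n>0$ and $R\in \mf G$ such that $(L^{\otimes n})^\vee$ embeds into $f^*(R)=f_1^*(f_2^*(R))$, and setting $N':=f_2^*(R)\in \mf K'$ produces the required embedding. The whole argument is a routine chase once the definitions are unwound; the only conceptually nontrivial input is the subquotient-to-subobject upgrade provided by Proposition~\ref{a-observable:p}, which is precisely the content one borrows from Bialynicki-Birula--Hochschild--Mostow.
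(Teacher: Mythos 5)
Your proposal is correct and follows essentially the same route as the paper's proof: both reduce to the observability of $f^*$ by using Proposition~\ref{a-prop:clemb} to realise objects of $\mf K'$ as subquotients of objects from $\mf G$, and then invoke Proposition~\ref{a-observable:p}. The only cosmetic difference is that the paper dualises and concludes directly that $V^\vee$ embeds (so $n=1$ suffices), whereas you first upgrade $L$ to a subobject of $f^*(P)$ and then apply the rank-one definition of observability; both verifications are equally valid.
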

\begin{proof}Write $\mf{K}'$ for the category of representations of $K'$. Suppose that $V\in \mf K$ embeds into some representation coming from $\mf K'$, we want to show that the same is true for $V^\vee$. Since $f_2$ is a closed immersion, by Proposition \ref{a-prop:clemb} every representation in $\mf{K}'$ is a subquotient of a representation coming from $\mf G$. Thus $V^\vee$ is a subquotient of a representation coming from $\mf G$. Since $f^*$ is observable, by Proposition \ref{a-observable:p}, the representation $V^\vee$ embeds then into a representation coming from $\mf{G}$. This shows that $f_1^*$ is observable.
\end{proof}
\begin{prop}[Normality] \label{a-prop:norm}
		The following conditions are equivalent.
	\begin{itemize}
		\item[1)]
		$f(K)\subseteq G$ is a closed normal subgroup.
		\item[2)] $f^*$ is observable and for every $M\in\mf G$, the maximal trivial subobject of $f^*M$ comes from a subobject of $M$ in $\mf G$.
	\end{itemize}
\end{prop}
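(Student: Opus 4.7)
My approach is to handle the two implications separately, using as the key Tannakian input the normality criterion: a closed subgroup $K'\subseteq G$ is normal in $G$ if and only if for every $M\in\mf G$ the subspace $M^{K'}$ underlies a subobject of $M$ in $\mf G$. Let $K'$ denote the scheme-theoretic image $f(K)\subseteq G$, which is automatically closed, and $\mf K'$ its representation category, so that $f$ factors canonically as $K\xrightarrow{f_1} K'\xrightarrow{f_2} G$ with $f_1$ faithfully flat and $f_2$ a closed immersion.

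The direction $2)\Rightarrow 1)$ should be essentially immediate from the criterion. Because the $K$-action on any $M\in\mf G$ factors through $K'$, the maximal trivial subobject of $f^*M$ in $\mf K$ is exactly $M^{K'}$; the second clause of $2)$ says this underlies a subobject of $M$ in $\mf G$ for every $M$, which is precisely the statement that $K'$ is normal in $G$.

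For $1)\Rightarrow 2)$, the same identification combined with normality of $K'$ yields the second clause of $2)$. The genuine work is to verify observability of $f^*$. Since $K'$ is normal, $G/K'$ is an affine group scheme, in particular quasi-affine, so by Bialynicki--Birula--Hochschild--Mostow $K'$ is an observable subgroup of $G$, and Corollary \ref{a-sub:c} then gives that $f_2^*\colon\mf G\to\mf K'$ is an observable functor. By Proposition \ref{a-prop:ff}, $f_1^*\colon\mf K'\to\mf K$ is fully faithful and its essential image $f_1^*(\mf K')\subseteq\mf K$ is closed under subobjects (and, by duality, quotients). Given a rank $1$ subobject $L\hookrightarrow f^*V=f_1^*(f_2^*V)$ in $\mf K$, these two properties of $f_1^*$ produce a rank $1$ subobject $L'\hookrightarrow f_2^*V$ in $\mf K'$ with $L=f_1^*L'$; observability of $f_2^*$ then supplies $n>0$ and $V'\in\mf G$ with $(L'^{\otimes n})^\vee\hookrightarrow f_2^*V'$, and applying the exact tensor functor $f_1^*$ yields $(L^{\otimes n})^\vee\hookrightarrow f^*V'$, as required.

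The main obstacle I expect is this observability step in $1)\Rightarrow 2)$: Lemma \ref{a-comp-obs:l} runs in the opposite direction along the factorization and so cannot be invoked, forcing me to argue by hand. The argument relies on the classical fact that a closed normal subgroup has affine quotient and is therefore observable, together with the stability of $f_1^*(\mf K')$ under sub- and quotient objects coming from the faithful flatness of $f_1$.
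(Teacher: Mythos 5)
Your direction $1)\Rightarrow 2)$ is essentially correct: deducing observability from the factorization through the (normal) image, using that $G/f(K)$ is affine and hence quasi-affine, is a legitimate alternative to the paper's route, which instead gets observability from Clifford's theorem via Lemma \ref{a-suf-obs:l} and so stays inside the categorical framework rather than importing the geometric characterization from \cite{BBHM63}. The genuine gap is in $2)\Rightarrow 1)$. The ``normality criterion'' you take as your key input --- that a closed subgroup $K'\subseteq G$ is normal if and only if $M^{K'}$ underlies a $G$-subobject of $M$ for every $M\in\mf G$ --- is false in the ``if'' direction, and your argument for $2)\Rightarrow 1)$ consists of nothing but an appeal to it; in particular you never use the hypothesis that $f^*$ is observable, which is precisely what makes the implication true. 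Concretely, take $k$ of characteristic $0$, $G=\mathrm{SL}_2$ and $K'=B$ a Borel subgroup. Every finite-dimensional representation $M$ of $G$ is semisimple, and a nontrivial irreducible one has $M^{B}=0$ (a $B$-invariant vector would have to lie simultaneously in the zero weight space and in the highest weight line), so $M^{B}=M^{G}$ for every $M$; this is certainly a $G$-subobject of $M$, yet $B$ is not normal. Consistently, restriction $\mathrm{Rep}(\mathrm{SL}_2)\to\mathrm{Rep}(B)$ is not observable: the $B$-stable lines in restricted representations carry only the non-negative powers of the dominant character, whose duals do not occur.

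The corrected criterion, with observability added, is essentially the proposition itself in the closed-immersion case, so it cannot simply be quoted; it must be proved, using both clauses of $2)$. The paper does this by introducing the smallest closed \emph{normal} subgroup $N$ of $G$ containing $f(K)$, factoring $f$ as $K\xrightarrow{f_1}N\hookrightarrow G$, and showing $f_1$ is faithfully flat (whence $f(K)=N$ is normal): observability of $f^*$ descends to $f_1^*$ by Lemma \ref{a-comp-obs:l}, the second clause of $2)$ together with Lemma \ref{H0:l} yields full faithfulness of $f_1^*$, and Proposition \ref{a-prop:ff} concludes. Some argument of this shape, in which the observability hypothesis does real work, is unavoidable in this direction.
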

\begin{proof}
To prove $1)\Rightarrow 2)$, we note that by Clifford's theorem $f^*$ sends semi-simple objects to semi-simple objects. By Lemma \ref{a-suf-obs:l}, this implies that $f^*$ is observable. For the second part of condition 2) it is enough to observe that since $f(K)$ is normal in $G$, the maximal trivial subobject of $f^*M$ is stable under the action of $G$.

\spa
We prove now $2)\Rightarrow 1)$. Let $K'$ be the smallest closed normal subgroup of $G$ containing the image of $f$ and $\mf K'$ the induced category of representations. Then $f$ decomposes as $$K\xrightarrow{f_1} K'\xrightarrow{f_2} G,$$ with $f_2$ a closed immersion. Since $f^*$ is observable, thanks to Lemma \ref{a-comp-obs:l}, the same is true for $f_1^*$. By Proposition \ref{a-prop:ff}, in order to prove that $f_1$ is faithfully flat, and thus that $K'=f(K)$, it remains to prove that $f_1^*$ is fully faithful.

\spa

 We want to apply Lemma \ref{H0:l}. Since $K'$ is an observable subgroup of $G$, we know that every $V\in \mf{K}'$ embeds into an object $W=f_2^*M$ with $M\in \mf G$. By Condition 2), there exists a subobject $N\subseteq M$ which is sent by $f^*$ to the maximal trivial subobject $T\subseteq f^*M=f_1^*W$. By the construction of $K'$, the subobject $f_2^*N\subseteq f_2^*M=W$, which is sent to $T\subseteq f^*M$, is trivial in $\mf K'$. This shows that the map $f_{1}^*: H^0_{\mf{K}'}(W)\to H^0_{\mf{K}}(f^*_1(W))$ is an isomorphism. We end the proof thanks to Lemma \ref{H0:l}.
\end{proof}

\begin{prop}[Exact sequence] \label{a-prop:exseq}  Assume that $f$ is a closed immersion, $g$ is faithfully flat and $g\circ f$ is trivial.
The following conditions are equivalent.
\begin{itemize}
\item[1)]  $1\to K\xrightarrow{f} G \xrightarrow{g} H\to 1$ is an exact sequence.
\item[2)] $f^*$ is observable and for every $M\in\mf G$, there exists $U\in \mf H$ such that the maximal trivial subobject of $f^*M$ comes from $g^*(U)\subseteq M$.

\end{itemize}

\end{prop}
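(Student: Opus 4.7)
The plan is to deduce this exact-sequence criterion from the normality criterion (Proposition~\ref{a-prop:norm}) plus a Tannakian characterisation of the quotient $G/f(K)$. The direction $1)\Rightarrow 2)$ follows by combining Proposition~\ref{a-prop:norm} with the observation that if $N\subseteq M$ is a subobject in $\mf G$ whose pull-back to $\mf K$ is trivial, then $f(K)$ acts trivially on $N$, so $N$ descends through $G/f(K)=H$ to an object of $\mf H$. The direction $2)\Rightarrow 1)$ first applies Proposition~\ref{a-prop:norm} to deduce normality of $f(K)$ in $G$, and then uses the additional strength of condition 2) to identify $G/f(K)$ with $H$.

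More explicitly, for $1)\Rightarrow 2)$, I first invoke Proposition~\ref{a-prop:norm} to conclude that $f^{*}$ is observable and that for every $M\in\mf G$, the maximal trivial subobject of $f^{*}M$ comes from some subobject $N\subseteq M$ in $\mf G$. Since $f^{*}N$ is trivial, $K$ acts trivially on the representation $N$, i.e. $N$ factors through $G/f(K)$. By the exactness of the sequence, $g$ induces an isomorphism $G/f(K)\iso H$, so the composition $G\to G/f(K)\iso H$ is $g$, and $N$ comes from a representation $U\in\mf H$ with $g^{*}U=N$.

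For $2)\Rightarrow 1)$, condition 2) in particular implies the weaker condition of Proposition~\ref{a-prop:norm}, so $f(K)\subseteq G$ is closed normal. The quotient $q:G\twoheadrightarrow G/f(K)$ is faithfully flat, and since $g\circ f$ is trivial, $g$ factors as $g=h\circ q$ for a unique morphism $h:G/f(K)\to H$. As $g$ is faithfully flat, so is $h$. To conclude that $h$ is an isomorphism I will show it is also a closed immersion, by verifying the characterisation of Proposition~\ref{a-prop:clemb}: every object of the category of representations of $G/f(K)$ is a subquotient (in fact here a subobject) of one coming from $\mf H$. A representation of $G/f(K)$ is exactly an $M\in\mf G$ on which $f(K)$ acts trivially, equivalently such that $f^{*}M$ is a trivial object of $\mf K$. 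Applying condition 2) to such an $M$, there exists $U\in\mf H$ and an inclusion $g^{*}U\hookrightarrow M$ whose image under $f^{*}$ is the maximal trivial subobject of $f^{*}M$, which now equals $f^{*}M$ itself. Since $f^{*}$ is exact and faithful, the inclusion $g^{*}U\hookrightarrow M$ must be an equality, so $M$ actually comes from $\mf H$. Hence $h$ is an isomorphism and the sequence is exact.

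The main obstacle will be the last step of $2)\Rightarrow 1)$: extracting from the condition on maximal trivial subobjects the fact that every object of $\mf G$ killed by $f^{*}$ (in the sense that its restriction is trivial) is actually in the essential image of $g^{*}$. The key trick is that for such an $M$ the maximal trivial subobject of $f^{*}M$ equals $f^{*}M$, which together with the exactness and faithfulness of $f^{*}$ forces the subobject $g^{*}U\subseteq M$ provided by condition 2) to fill out all of $M$.
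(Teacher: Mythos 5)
Your proof is correct and follows essentially the same route as the paper: $1)\Rightarrow 2)$ is deduced from Proposition~\ref{a-prop:norm} together with descent of $K$-trivial representations through $G/f(K)\cong H$, and $2)\Rightarrow 1)$ first gets normality from Proposition~\ref{a-prop:norm} and then shows $G/f(K)\to H$ is a closed immersion via Proposition~\ref{a-prop:clemb}. The only difference is that you spell out explicitly (using exactness and faithfulness of $f^*$ to force $g^*U=M$) the step the paper dismisses with ``this is guaranteed by Condition 2)''; this is a welcome elaboration, not a deviation.
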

\begin{proof}
This can be proven combining \cite[Thm.~A.1]{EHS07} and Corollary \ref{a-sub:c}. We present here a variant of this proof. The implication $1)\Rightarrow 2)$ follows from Proposition \ref{a-prop:norm}. For the converse, we know by Proposition \ref{a-prop:norm} that the closed subgroup $f:K\hookrightarrow G$ is normal, thus it remains to prove that the quotient morphism $G/K\twoheadrightarrow H$ is a closed immersion. By Proposition \ref{a-prop:clemb}, this amounts to showing that every representation of $G$ which is trivial when restricted to $K$ comes from a representation of $H$. This is guaranteed by Condition 2).

\end{proof}

\begin{rmk}Proposition \ref{a-prop:norm} corresponds to \cite[Prop. C.3]{And20}, while combining Lemma \ref{a-suf-obs:l} and Proposition \ref{a-prop:exseq} one recovers \cite[Prop. 2.6]{LP17}.
\end{rmk}

\bibliographystyle{alpha}

\end{document}